\documentclass[10pt,twoside]{amsart}
\usepackage{amssymb, amsmath, mathrsfs, mathtools, amsfonts, amsthm, amscd, yfonts}
\usepackage{xcolor}
\usepackage{float}
\usepackage{hyperref} 
\usepackage{float}
\usepackage{multicol}
\usepackage{tikz-cd}

\theoremstyle{plain}
\newtheorem{Theorem}{Theorem}
\newtheorem{Proposition}[Theorem]{Proposition}
\newtheorem{Lemma}[Theorem]{Lemma}
\newtheorem{Corollary}[Theorem]{Corollary}
\newtheorem{Conjecture}[Theorem]{Conjecture}
\theoremstyle{definition}
\newtheorem{Definition}[Theorem]{Definition}
\theoremstyle{remark}
\newtheorem{Remark}[Theorem]{Remark}
\newtheorem{Example}[Theorem]{Example}

\newcommand{\rR}{\mathscr{R}}

\title{On regular quantum commutative algebras}

\author[Y. Bahturin]{Yuri Bahturin$^{1,2}$}
\address{1. Department of Mathematics and Statistics, Memorial University of Newfoundland, St. John's, NL, Canada, A1C 5S7}
\email{bahturin@mun.ca}
\author[L. Centrone]{Lucio Centrone$^2$}
\address{2. Dipartimento di Matematica, Universit\`a degli Studi di Bari Aldo Moro, Via Edoardo Orabona, 4, 70125 Bari, Italy}
\email{lucio.centrone@uniba.it}
\author[K. Pereira]{Kau\^e Pereira$^3$}
\address{3. IMECC, UNICAMP, Rua S\'ergio Buarque de Holanda 651, 13083-859 Campinas, SP, Brazil}
\email{k200608@dac.unicamp.br}
\thanks{K. Pereira was supported by FAPESP Grant 2023/01673-0, and FAPESP Grant 2025/03763-2}

\subjclass[2020]{16R10, 16R50, 16W55, 16T05}

\keywords{Regular gradings; Regular quantum commutative algebras; PI algebras}

\begin{document}
\begin{abstract}
Let $K$ be an algebraically closed field of characteristic different from $2$. We provide a positive solution to the Bahturin--Regev conjecture in the general finite-dimensional (non-graded) setting, assuming that $\operatorname{char}(K)$ does not divide the quantum length of a minimal regular quantum commutative decomposition. Furthermore, we obtain a criterion, formulated in terms of regular quantum commutative decompositions, under which a set-grading on a semisimple associative algebra is realized as a group grading.
\end{abstract}
\maketitle

\section{Introduction}\label{s1}
The theory of associative algebras satisfying polynomial identities has attracted sustained attention over the past decades. A major breakthrough in the area was achieved by Kemer in the period 1984--1986, whose work remains a cornerstone in the structure theory of PI-algebras. 

In \cite{Kemer.book}, Kemer introduced the notion of a $T$-prime ideal. A $T$-ideal $P$ of $K\langle X\rangle$ (the free associative algebra freely generated by a countable infinite set $X$) is said to be $T$-\emph{prime} if, whenever $U_{1}U_{2}\subseteq P$ for some $T$-ideals $U_{1}$ and $U_{2}$, it follows that $U_{1}\subseteq P$ or $U_{2}\subseteq P$. 

Kemer proved that amy $T$-prime ideal over the field of characteristic zero is an ideal of identities of an algebra of one of three types
\[
M_{n}(K),\,M_{n}(E),\,M_{p,q}(E),
\]
where $E$ is the Grassmann algebra of countable dimension  and $M_{p,q}(E)$ is the Grassman envelope of the associative superalgebra $M_{p.q}(K)$ (see \cite[Theorem 8.2.2]{Drensky.Formanek.Book}). One says that  $M_{n}(K)$, $M_{n}(E)$, and $M_{p,q}(E)$ are $T$-\emph{prime algebras}. 
Each of these algebras admits a natural $\mathbb{Z}_{2}$-grading. In \cite{Kemer.book}, Kemer  provided a list of PI-equivalences among such algebras and showed that showed  that the tensor product of $T$-prime algebras is again $T$-prime (see \cite[Theorem 6.1]{regevz2}). 

A natural generalization of the latter result deals with so called \emph{twisted tensor products}. Given two $\mathbb{Z}_{2}$-graded algebras $A$ and $B$, their twisted or $\mathbb{Z}_{2}$-graded tensor product, denoted by $A\overline{\otimes}B$, is  an algebra with underlying vector space $A\otimes B$. Given homogeneous  elements $a,c\in A$ and $b,d\in B$ their product is defined by
\[
(a\overline{\otimes} b)(c\overline{\otimes} d)
:=(-1)^{\deg_{\mathbb{Z}_{2}}(b)\deg_{\mathbb{Z}_{2}}(c)}(ac\overline{\otimes} bd),
\]
where $\deg_{\mathbb{Z}_{2}}(\cdot)$ denotes the degree with respect to the grading on each algebra.

It was conjectured in \cite{regevz2} that if $A$ and $B$ are $T$-prime algebras, then $A\overline{\otimes}B$ is again $T$-prime. This conjecture was partially verified in \cite{regevz2}, and later completely solved by Freitas and Koshlukov in \cite{Freitas.Koshlukov}, and independently by Di Vincenzo and Nardozza in \cite{DiVincenzo.Nardozza} . 

The main technique underlying these developments, introduced by Regev and Seeman in \cite{regevz2}, consists in establishing a matrix criterion to detect PI-equivalence. In this context arises the notion of a regular algebra, which in this paper we refer to as \emph{regular quantum commutative algebra}. 

Let $R$ be an associative algebra over a field $K$ with $\operatorname{char}(K)\neq 2$, and let \begin{equation}\label{eRQCA} 
\rR:\:R=R_{1}\oplus \cdots \oplus R_{m}
\end{equation}
 be a vector space decompositon  of $R$. We say that (\ref{eRQCA}) is a \emph{regular quantum commutative decomposition} of $R$, and that $R$ is a \emph{regular quantum commutative algebra}, if the following conditions hold: 

(1) For every $1\leq i,j\leq m$, there exists a scalar $\theta(i,j)\in K^{\ast}$, depending only on the components $R_{i}$ and $R_{j}$, such that $a_{i}a_{j}=\theta(i,j)a_{j}a_{i}$ for all $a_{i}\in R_{i}$ and $a_{j}\in R_{j}$; 

(2) For every $n\in \mathbb{N}$ and every $(i_{1},\ldots,i_{n})\in\{1,\ldots,m\}^{n}$, there exist homogeneous elements $a_{1}\in R_{i_{1}},\ldots,a_{n}\in R_{i_{n}}$ such that $a_{1}\cdots a_{n}\neq 0$. 

In this setting, $m$ is called the \emph{quantum length} of the regular quantum commutative decomposition (\ref{eRQCA}), the scalars $\theta(i,j)\in K^{\ast}$ define the \emph{regular quantum commutation function} $\theta$ of $R$. One associates with (\ref{eRQCA}) the \emph{regular quantum decomposition matrix} 
\begin{equation}\label{eRQCDM}
 M^{R}=(\theta(i,j))\mbox{ where }1\leq i,j\leq m.
\end{equation} As shown in \cite[Theorem 3.1]{regevz2}, if $R$ and $A$ are algebras with such decompositions having the same number of components, and if $M^{R}$ is obtained from $M^{A}$ by a permutation of rows, then $R$ and $A$ satisfy the same multilinear identities. In particular, if $\operatorname{char}(K)=0$, then $R$ and $A$ are PI-equivalent. 

Later, Bahturin and Regev introduced in \cite{bahturin2009graded} the notion of a regular grading by an abelian group. These structures correspond to regular quantum commutative algebras in which the decomposition coincides with the grading, and the quantum commutation function becomes a skew-symmetric bicharacter of the group. In this setting, the values of the bicharacter are roots of unity, and the non-degeneracy condition is equivalent to the absence of nontrivial graded \emph{monomial} identities.

Such structures arise naturally in the theory of graded algebras. Notable examples include the natural $\mathbb{Z}_{2}$-grading on the Grassmann algebra $E$, the Sylvester grading on $M_{n}(K)$, and the natural $G$0-grading on a twisted group algebra of a group $G$. These algebras  proved to be fundamental in the study of \emph{twisted tensor products} (see \cite[Definition 3.1]{bahturin2009graded}). Twisted tensor products generalize the $\mathbb{Z}_{2}$-graded tensor products and appear in several areas, including geometry and mathematical physics. 

In \cite{bahturin2009graded}, the authors introduced the notion of minimality for such decompositions. A decomposition (\ref{eRQCA}) is called \emph{minimal} if no two rows (or equivalently, columns) of $M^{R}$ coincide. Equivalently, the decomposition cannot be ``coarsened'' by merging two or more components. In the same work, the authors conjectured that (\ref{eRQCA}) is minimal if and only if $\det M^{R}\neq 0$. Moreover, in this case, both the number of direct summands and the determinant of $M^{R}$ are independent of the chosen decomposition. 

We observe that, in the context of regular gradings, minimality is equivalent to the non-degeneracy of the corresponding bicharacter. Moreover, it is immediate that $\det M^{R}\neq 0$ implies minimality. As an illustration, consider the matrix algebra $M_{n}(K)$ endowed with its Sylvester grading by the group $\mathbb{Z}_{n}\times \mathbb{Z}_{n}$. It was shown by Bahturin, Regev, and Zeilberger in \cite{bcommutation} that, over an algebraically closed field of characteristic $0$, one has $\det M^{M_{n}(K)}=\pm n^{n^{2}}$. In particular, this grading is minimal, and hence yields a minimal regular decomposition of $M_{n}(K)$.

In \cite{Eli1}, Aljadeff and David gave a positive answer to the Bahturin--Regev conjecture in the context of regular gradings, assuming that $K$ is an algebraically closed field of characteristic $0$. They extended the notion of a regular grading to the gradings by arbitrary finite groups and showed that the conjecture holds in this more general setting. More precisely, if $R=\bigoplus_{g\in G}R_{g}$ is a regular grading with minimal decomposition, then $\det M^{R}=\pm |G|^{|G|/2}$, and the order of the group $|G|$ coincides with $\exp(R)$, the PI-exponent of $R$. Moreover, the authors provided a complete description of the graded identities of such algebras and derived consequences for the associated decomposition matrices. Remarkably, the minimal polynomials of these matrices lie in $\mathbb{Z}[x]$ (see \cite[Corollary 47]{Eli1}).  We also mention that, in this work, the authors employ powerful cohomological tools to show the folowing.  Given a regular grading by a (not necessarily abelian) group $G$ with the commutation function $\theta$ satisfying $\theta(g,g)=1$ for all $g\in G$, the function $\theta$ is induced by a $2$-cocycle $\alpha\in Z^{2}(G,K^{\ast})$. This result plays a fundamental role in the study of twisted group algebras. For further details on the use of twisted group algebras in other contexts, for instance in the study of generic $G$-graded Azumaya algebras introduced by Aljadeff and Karasik in \cite{Aljadeff.Karasik}, we refer the reader to \cite{Aljadeff.Haile.Karasik}.

Bahturin - Regev's conjecture fails in general for regular gradings over fields of characteristic strictly greater than $2$. 

Given a finite abelian group $G$ and a decomposition $R=\bigoplus_{g\in G}R_{g}$ as a $G$-graded algebra satisfying the first of the two regularity conditions with respect to a bicharacter $\theta\colon G\times G\rightarrow K^{\ast}$, one calls $R$  a \emph{$\theta$-commutative algebra}. Such algebras form a subclass of \emph{$\theta$-solvable} and \emph{$\theta$-nilpotent} algebras, which have been extensively studied by Bahturin, Montgomery, and Zaicev in \cite{Bahturin.Montgomery.Zaicev}, as well as by Bahturin and Parmenter in \cite{Bahturin.Parmenter} and other authors. In a more general context of algebras with the action of Hopf the generlized commutative algebras has been studied in Bahturin, Fischman, and Montgomery in \cite{Bahturin.Fischman.Montgomery}, where the authors prove an analogue of the classical theorem of Scheunert \cite{M. Scheunert} in the setting of commutative and cocommutative Hopf algebras.

Other works in the context of regular gradings include \cite{L.P.K.2}, where finite-dimensional regular gradings by finite abelian groups are described, \cite{L.P.K.3}, where infinite-dimensional $\mathbb{Z}_{2}$-graded regular algebras are studied, and \cite{L.P.C.K.1}, where the authors apply regular gradings to the study of the primeness property for graded central polynomials.

The main objective of this paper is to investigate regular quantum commutative decompositions in the general (non-graded) setting and to establish criteria under which set-gradings on semisimple associative algebras can be realized as group gradings. The results obtained extend and unify several aspects of the theory of regular decompositions, providing new structural insights beyond the graded framework. In what follows, we provide a brief overview of the main results of each section of the paper. Throughout this paper, we assume that $K$ is an algebraically closed field of characteristic different from $2$. In some of the main results, additional restrictions on the characteristic of the field are required.

 In Section \ref{s2}, we provide a brief review of some notions of the Cohomology Theory, Graded Algebras and PI-theory.
 
	In Section \ref{s3}, we consider regular gradings by arbitrary finite groups, as defined in \cite{Eli1}. The main results are Propositions \ref{infinite.dimensional} and \ref{Proposition.KG}. The first proposition  provides a necessary condition under which a regular $G$-graded algebra is finite-dimensional. In the second, it is shown that if $R$ is a regular $G$-graded algebra, then it contains a copy of the twisted group algebra $K^{\alpha}G$, for a 2-cocycle $\alpha$. This result is analogous to \cite[Proposition 26]{L.P.K.2} and \cite[Theorem 17]{L.P.K.3}.

	The main goal of Section \ref{s4} is to study the regular quantum commutation function. In the case of regular group gradings, this function becomes a bicharacter of the underlying group, hence its values are roots of unity. So it is natural to investigate if the same is true in the case of general regular quantum commutative algebras.
	
	 Using the techniques developed by Regev in \cite{Regev1972}, we show in Proposition \ref{commutattive.PI} that regular quantum commutative algebras are PI-algebras. Applying methods of PI-theory, we conclude in Theorem \ref{nth.root} that the values of a regular quantum commutation function are indeed roots of unity in the case where $\operatorname{char}(K)=0$. 
	
	In the case of finite-dimensional algebras, we prove a characteristic-free Theorem \ref{general.finite.dimensional.case} saying that the values of the quantum commutration function are always roots of unity. 
	
	 In Section \ref{s5}, we give a positive answer to  Bahturin--Regev's  conjecture for a minimal finite-dimensional regular quantum commutative algebra $R$ of quantum length $m$, in the case where $\operatorname{char}(K)\nmid m$. Namely, we prove in Theorem \ref{Bahturin.Regev}, that under the above restrictions, $\det M^{R}=\pm m^{m/2}$ and $m=\dim(C)$, where $C$ is a simple subalgebra of $R$. 
	
	In particular, if $\operatorname{char}(K)=0$, then $m$ coincides with the PI-exponent of $R$ (see, for instance, \cite{Eli1}).
 
	The main objective of Section \ref{s6} is the study of the connection between regular quantum commutative decompositions and the gradings of an algebra. We begin by considering the matrix algebra $M_{n}(K)$. In Theorem \ref{group.grading.matrices.algebra} we show that any decomposition of the form 
	\[
	M_{n}(K)=L_{1}\oplus \cdots \oplus L_{m},
	\]
satisfying the first regularity condition, where each $L_{i}$ is a nonzero subspace, is a regular grading by a finite abelian group $G$. In Theorem \ref{Set.grading.semissimple} we show that if $\mathscr{R}$ is a set-grading on a finite-dimensional semisimple algebra $R$ such that $\rR$ is a minimal regular quantum commutative decomposition, then $\rR$ must be a group grading by a finite abelian group.
	
	The main result of Section \ref{s7}\, is Proposition \ref{necessary.condition}\, giving a necessary condition for a finite-dimensional algebra to admit a regular quantum commutative decomposition. This result is further applied to describe the structure of regular quantum commutative algebras. 	For instance, when $\operatorname{char}(K)=0$, we provide an example of an algebra that does not admit any regular quantum commutative decomposition, but does admit a regular grading by a non-abelian group, in the sense of Aljadeff and David \cite{Eli1}.
	
	Finally, it is shown that if $G$ is a group (not necessarily abelian) of order $p^{n}$, where $p$ is a prime number, and $\operatorname{char}(K)\nmid |G|$, then the group algebra $KG$ admits the structure of a regular grading by a finite abelian group. 

\section{Background}\label{s2}

Throughout this paper, $K$ denotes an algebraically closed field with $\operatorname{char}(K)\neq 2$. Let $G$ be a group with the neutral element $e\in G$. We assume that $G$ acts trivially on $K$. A function $\alpha\colon G\times G\rightarrow K^{\ast}$ satisfying
 \begin{equation}\label{e1}
\alpha(g,h)\alpha(gh,k)=\alpha(g,hk)\alpha(h,k),\mbox{ for all } g,h,k\in G,
\end{equation}
is called a \textit{2-cocycle} on $G$; the set $Z^{2}(G,K^{\ast})$ of all 2-cocycles on $G$ is an abelian group under the pointwise multiplication. A 2-cocyle $\delta$ is called a \textit{coboundary} on $G$ if there exists a function $\eta\colon G\rightarrow K^{\ast}$ satisfying $\eta(e)=1$ such that
\begin{equation}\label{e2}
\delta(g,h)=\eta(g)\eta(hy)\eta(gh)^{-1}\mbox{ for all } g,h\in G.
\end{equation}
 The subgroup of all coboundaries on $G$ is denoted by $B^{2}(G,K^{\ast})$. The second cohomology group of $G$ is the quotient group $H^{2}(G,K^{\ast})= Z^{2}(G,K^{\ast})/B^{2}(G,K^{\ast})$. The equivalence class of $\alpha\in Z^{2}(G,K^{\ast})$ in $H^{2}(G,K^{\ast})$ is denoted by $[\alpha]$. 
 A particular type of $2$-cocycles are \emph{bicharacters}. A bicharacter $\beta:G\times G\to K^*$ is a function satisfying
 \begin{equation}\label{e3}
 \beta(gh,k)=\beta(g,k)\beta(h,kz)\mbox{ and }\beta(g,hk)=\beta(g,h)\beta(g,k),\mbox{ for all } g,h,k\in G.
\end{equation}
A bicharacter $\beta$ satisfying
\begin{equation}\label{e4}
 \beta(g,h)=\beta(h,g)^{-1}\mbox{ for all } g,h\in G.
\end{equation}
 is called \emph{skew-symmetric}, or \emph{alternating}.
 
 \medskip
 
 If $G$ is an \emph{abelian} group, then for any a 2-cocycle $\alpha$, the function $\beta(g,h)=\alpha(g,h)\alpha(h,g)^{-1}$ is a skew-symmetric bicharacter.
 
 \medskip
  
 For further information about the cohomology groups, we refer the reader to \cite{rotman2009introduction},\cite{rotman.advanced} and \cite{Brauer}. 
 
 \medskip
  
In this paper, we deal with the \emph{unital} associative algebras over the field $K$. Let $R$ be such an algebra. We say that $R$ is a \emph{$G$-graded algebra} if there exist subspaces $R_{g} \subseteq R$, for each $g \in G$, such that
\[
R = \bigoplus_{g \in G} R_g \quad \text{and} \quad R_{g} R_{h} \subseteq R_{gh}.
\]
The \textit{support} of the graded algebra $R$ is the set $\operatorname{Supp}(R) = \{g \in G \mid R_{g} \neq \{0\}\}$. Since the identity element $1$ of $R$ is an element of $R_{e}$, we have $e\in \operatorname{Supp}(R)$. An ideal $I$ of $A$ is called \emph{graded} if $I=\bigoplus_{g\in G} I\cap R_{g}$. A $G$-graded algebra $R$ is called \emph{graded simple} if $R^{2}\neq \{0\}$ and $R$ has no proper nonzero graded ideals.

Given a $2$-cocycle $\alpha$ on a group $G$ and a $G$-graded algebra $R$, one can define an \emph{$\alpha$-twist} $R^\alpha$ as a vector space $R$ with the operation $g\circ_\alpha h=\alpha(g,h)gh$ for all $a\in R_g$ and $b\in R_h$. An important particular case is where $R=KG$, the group algebra of $G$ with the natural $G$-grading $(KG)_g=Kg$, for any $g\in G$. If $\alpha\in Z^2(G,K^*)$ then the $\alpha$-twist of $KG$ is called an ($\alpha$-)twisted group algebra, denoted by $K^\alpha G$. 
\begin{Remark}
\label{isomorphism.twisted}
    By \cite[Theorem 2.13]{Kochetov.book}, given $\alpha, \alpha'\in Z^{2}(G,K^{\ast})$,  $K^{\alpha}G$ is isomorphic to $K^{\alpha'}G$ (as $G$-graded algebras) if and only if $[\alpha]=[\alpha']$.
\end{Remark}
  
Given a countable set of variables $X=\{x_{1},x_{2},\ldots\}$ let $K\langle X\rangle$ denote the free associative algebra freely generated by $X$. Given an algebra $R$ and $0\neq f(x_{1},\ldots, x_{n})\in K\langle X\rangle$ we say $f$ is a \emph{polynomial identity} for $R$ if $f(r_{1},\ldots, r_{n})=0$, for all $r_{1}$,\dots, $r_{n}\in R$. If $R$ satisfies some polynomial identity we say $R$ is a PI-algebra.  Given PI-algebra $R$, we can form the set $T(R)\subseteq K\langle X\rangle$ of all polynomial identities satisfied by $R$. It can be shown that $T(R)$ is an ideal of $K\langle X\rangle$ and for any endomorphism $\phi\colon K\langle X\rangle\rightarrow K\langle X\rangle$, $\phi(T(R))\subseteq T(R)$. For instance, if $\operatorname{char}(K)=0$ and $E$ is the infinite dimensional Grassmann algebra generated by $\{e_{1},\ldots,e_{n},\ldots\}$, then $T(E)=\langle[x_{1},x_{2},x_{3}]\rangle^{T}$. Another classical example, which requires only that $K$ is infinite, is $UT_{n}(K)$, the algebra of $n \times n$ upper triangular matrices; in this case, it is well-known that $T(UT_{n}) = \langle [x_{1},x_{2}] \cdots [x_{2n-1},x_{2n}] \rangle^{T}$.

Given $n\in \mathbb{N}$, we denote  by $P_{n}$ the vector space of $K\langle X\rangle$ generated by all the monomials $x_{\sigma(1)}\cdots x_{\sigma(n)}$, with $\sigma\in S_{n}$. We define the $n$-th codimension of $R$ to be $c_{n}(R)=\dim P_{n}(R)$, where $P_{n}(R)=P_{n}/P_{n}\cap T(R)$. The PI-exponent of $R$ is defined as
\[
\exp(R)=\lim_{n\rightarrow \infty }\sqrt[n]{c_{n}(R)}.
\]
The existence and integrality of the PI-exponent for the associative algebras over fields of characteristic $0$ has been established by Giambruno and Zaicev in \cite[Theorem 6.5.2]{GZbook}. For instance if $R$ is a central simple algebra, i.e $R$ is simple and $Z(R)\cong K$, then $\exp(R)=\dim (R)$, in particular $\exp(M_{n}(K))=n^{2}$. In \cite{GZbook}, one can find the following calculation of the PI-exponent of a finite dimensional algebra $R$ over $K$. Consider the Wedderburn–Malcev decomposition of $R$, 
$R=B_{1}\oplus \cdots \oplus B_{m}\oplus J(R)$,
where $B_{1},\cdots, B_{m}$ are simple subalgebras of $R$ and $J(R)$ is its Jacobson radical.
Then $\exp(R)$ is equal to the maximal value of
$\dim(B_{i_{1}}\oplus \cdots \oplus B_{i_{p}})$,
where $B_{i_{1}}$,\dots, $B_{i_{p}}\in \{B_{1},\ldots, B_{m}\}$ are distinct and satisfy $B_{i_{1}}J(R)\cdots J(R)B_{i_{p}}\neq \{0\}$.

\section{Regular gradings by non-abelian groups}\label{s3}
Throughout this section, $G$ is a finite group with neutral element $e$. Let us remind the definition \cite[Definition 9]{Eli1} of a regular $G$-graded algebra. Given an $n$-tuple $\textbf{g}=(g_{1},\ldots,g_{n})\in G^{n}$ we denote by $S_{n,\textbf{g}}$ the set of all permutations $\sigma$ in $S_{n}$ such that $g_{\sigma(1)}\cdots g_{\sigma(n)}=g_{1}\cdots g_{n}$. 

\begin{Definition} 
\label{Definition.general.reg}
Let $R$ be a $G$-graded algebra.  We say that $R=\bigoplus_{g\in G}R_{g}$ is a \emph{$G$-graded regular algebra} if the following conditions are satisfied:
\begin{enumerate}
    \item[(1)] For any $n\in \mathbb{N}$ and any $n$-tuple $\mathfrak{g}=(g_{1},\ldots, g_{n})\in G^{n}$, there exists $a_{i}\in R_{g_{i}}$, $1\leq i\leq n$, such that $a_{1}\cdots a_{n}\neq 0$. 
    \item[(2)] Given any $n\in \mathbb{N}$ and any $n$-tuple $\mathbf{g}=(g_{1},\ldots, g_{n})\in G^{n}$, for each $\sigma\in S_{n,\mathbf{g}}$, there exists $\theta(\mathbf{g},\sigma)\in K^{\ast}$ such that for all $a_{i}\in R_{g_{i}}$, $1\leq i\leq n$, we have
    \[
    a_{1}\cdots a_{n}=\theta(\mathbf{g},\sigma)a_{\sigma(1)}\cdots a_{\sigma(n)}.
    \]
\end{enumerate}
The function $\theta(\mathbf{g},\sigma)$ is called the \textit{commutation function} of $R$. 
\end{Definition}
\begin{Remark} If $R$ is a $G$-graded regular algebra, then (1) implies that $\operatorname{Supp}(R) = G$.
\end{Remark}

\begin{Remark} 
\label{recovering}
In our text, given $g\in G$ and $h\in C_{G}(g)$, we write $\theta(g,h)$ instead of $\theta((g,h),(1,2))$. If $G$ is a finite abelian group, written multiplicatively, then the property (2) is replaced by the following classical property (see \cite{regevz2}): 
\begin{enumerate} 
    \item[(2')] for every $g, h\in G$ and every $a\in  R_g,\, b\in R_h$ we have
    \begin{equation}
    \label{eq.2'}
   ab = \theta(g,h)ba
	\end{equation}
\end{enumerate}

 If $R$ is a $G$-graded regular algebra, we denote by $M_{G}^{R}=(\theta(g,h))_{(g,h)}$ the \textit{  decomposition matrix} of $R$. 
    
\end{Remark}

\begin{Remark} Let $G$ be an abelian group, $R$ a $G$-graded algebra, and $\theta\colon G\times G\rightarrow K^{\ast}$ a bicharacter of $G$. If $R$ satisfies only (\ref{eq.2'}), then $R$ is said to be $\theta$-commutative.
	
\end{Remark}

\begin{Example} Given a cocycle $\alpha\in Z^{2}(G,K^{\ast})$, the twisted group algebra $C=K^{\alpha}G$ is spanned over $K$ by a set of invertible elements $\{X_{g}\mid g\in G\}$ subject to the following product: $X_{g}X_{h}=\alpha(g,h)X_{gh}$. It is a $G$-graded regular algebra with the commutation function  $\theta(g,h)=\alpha(g,h)\alpha(h,g)^{-1}$,  for all $h$, $g\in G$.  As we noted earlier, $\theta(g,h)$ is a skew-symmetric bicharacter. For further details, see \cite[Remark 1.1.1]{Eli1}. 
\end{Example}

\begin{Definition} Let $R$ be a $G$-graded regular algebra with the commutation function $\theta$. We say that the regular decomposition of $R$ is \textit{minimal} if $\theta$ is non-degenerate, i.e for any $g\in G$, there exists $h\in C_{G}(g)$ such that $\theta(g,h)\neq 1$, or equivalently (by \cite[Lemma 22]{Eli1}) $\theta(h,g)\neq 1$.
\end{Definition}
\begin{Remark}
   If $G$ is a finite abelian group with the neutral element $e\in G$, and $R$ is a $G$-graded regular algebra with bicharacter $\beta\colon G\times G\rightarrow K^{\ast}$, then the regular decomposition of $R$ is minimal if and only if $\beta(g,h)=1$ for all $h\in G$ implies that $g=e$. Equivalently, $M_{G}^{R}$ has no two columns (equivalently, rows) that are equal.
\end{Remark}

Let $R$ be a $G$-graded regular grading with commutation function $\theta$. Consider the map $\psi\colon  G\rightarrow \{\pm 1\}\cong \mathbb{Z}_{2}$, $g\mapsto \theta(g,g)$. It can be shown $\psi$ is a homomorphism of groups (see proof of  \cite[Lemma 37 ]{Eli1}), and $H_{\psi}:=Ker(\psi)=\{g\in G\mid \theta(g,g)=1\}$  is a subgroup of index at most 2. Suppose $\psi\neq 0$. Then, $G/H_{\psi}\cong \mathbb{Z}_{2}$. In this way, by coarsening, $R=R_{H_{\psi}}\oplus R_{G\setminus H_{\psi}} $, where $R_{H_{\psi}}=\bigoplus_{h\in H_{\psi}}R_{h}$ and $R_{G \setminus H_{\psi}}=\bigoplus_{g\in G\setminus H_{\psi}}R_{g}$,  $R$ becomes a $\mathbb{Z}_{2}$-graded algebra.

The next result generalizes \cite[Lemma 6]{L.P.K.3}.

\begin{Proposition} 
\label{infinite.dimensional}
Let $R$ be a $G$-graded regular grading with the commutation function $\theta$. If $\theta(g,g)=-1$, for some $g\in G$, then $\dim R=\infty$. 
\end{Proposition}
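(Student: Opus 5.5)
The idea is that a homogeneous component with $\theta(g,g)=-1$ behaves like the odd part of a Grassmann algebra: products of its elements are anticommuting, and regularity forces arbitrarily long such products to be nonzero. A nonzero monomial in $n$ distinct anticommuting elements must involve $n$ linearly independent elements, which is incompatible with finite dimension.

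The first step is to record what $\theta(g,g)=-1$ gives. By Remark \ref{recovering}, $\theta(g,g)$ is the scalar attached to the transposition $(1,2)\in S_{2,(g,g)}$. Hence $ab=-ba$ for all $a,b\in R_{g}$, and in particular $a^{2}=-a^{2}$. Since $\operatorname{char}(K)\neq 2$, this yields $a^{2}=0$ for every $a\in R_g$. Consequently, any product $a_1\cdots a_n$ with all $a_i\in R_g$ is alternating in its arguments. Concretely, swapping two adjacent factors changes the sign, and if two factors coincide the product vanishes: move them next to each other with sign changes and use $a^2=0$.

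Next, suppose towards a contradiction that $\dim R<\infty$, so $d:=\dim R_g<\infty$. Fix a basis $v_1,\dots,v_d$ of $R_g$ and take $n=d+1$. Expand each $a_i\in R_g$ in this basis and use multilinearity. Then $a_1\cdots a_n$ is a linear combination of products $v_{j_1}\cdots v_{j_n}$, and each such word has a repeated index because $n>d$. By the alternating property every such word is $0$, so $a_1\cdots a_n=0$ for all $a_i\in R_g$.

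Finally, this contradicts condition (1) of Definition \ref{Definition.general.reg} applied to the tuple $(g,\dots,g)\in G^{d+1}$, which supplies $a_i\in R_g$ with $a_1\cdots a_{d+1}\neq 0$. Therefore $\dim R_g=\infty$, and in particular $\dim R=\infty$. There is no serious obstacle. The only point needing care is that for non-abelian $G$ the relation $ab=-ba$ on $R_g$ really does come from condition (2) via $\theta((g,g),(1,2))$; this holds because $g\cdot g=g\cdot g$ trivially, so $(1,2)\in S_{2,(g,g)}$.
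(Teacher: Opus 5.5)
Your proof is correct, but it reaches the contradiction by a different route from the paper's.

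\textbf{The paper's route.} It sets $n=\dim R$ and takes $a_1,\dots,a_{n+1}\in R_g$ with $a_1\cdots a_{n+1}\neq 0$. It then proves that the $2^{n+1}-1$ ordered subproducts $a_{i_1}\cdots a_{i_r}$ ($i_1<\cdots<i_r$) are linearly independent in $R$. This is done with a minimal-linear-relation argument: multiplying a shortest relation by an $a_k$ that occurs in some terms but not in others kills those terms and leaves a shorter relation. Since $2^{n+1}-1>n$, this is impossible.

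\textbf{Your route.} You stay inside $R_g$. Multiplication restricted to $R_g$ is alternating multilinear, so expanding in a basis and applying the pigeonhole principle shows that every product of $\dim R_g+1$ elements of $R_g$ vanishes. This contradicts condition (1) for the tuple $(g,\dots,g)$.

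\textbf{Comparison.} Your version is shorter and avoids the bookkeeping the paper leaves implicit, namely checking that the terms surviving multiplication by $a_k$ are again distinct nonzero elements of $\mathcal{O}$ up to sign. It also makes explicit where $\operatorname{char}(K)\neq 2$ is used, and how $\theta(g,g)$ comes from the transposition in $S_{2,(g,g)}$. It gives the sharper conclusion $\dim R_g=\infty$. Strictly speaking, to state that conclusion you should start from $\dim R_g<\infty$ rather than $\dim R<\infty$; that is all your argument uses.

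The paper's version yields more structure: one nonzero product of $N$ elements of $R_g$ produces a Grassmann-like family of $2^N-1$ independent subproducts, which fits the later role of Grassmann envelopes. For the proposition itself, though, the linear independence of the $n+1$ single factors $a_i$ would already suffice.
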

\begin{proof} We use the notation preceding the Proposition. Suppose $\dim R=n<\infty$. Let $g\in G\setminus H_{\psi}$. By regularity, there exist $a_{1}$,\dots, $a_{n+1}\in R_{g}$ such that $a_{1}\cdots a_{n+1}\neq 0$. Notice that, since $a^{2}=0$ for every $a\in R_{g}$, all the elements $a_{i}$ must be distinct. We claim that the set $\mathcal{O}:=\{a_{i_{1}}\cdots a_{i_{r}}\mid 1\leq i_{1}< \cdots <i_{r}\leq n+1,\quad r\leq n+1\}$ is linearly independent. Denote by $q \in \mathbb{N}$ the minimal integer such that there exist elements $u_{1}, \dots, u_{q} \in \mathcal{O}$ and nonzero scalars $\gamma_{1}, \dots, \gamma_{q} \in K$ satisfying
\[
\gamma_{1}u_{1} + \cdots + \gamma_{q} u_{q} = 0.
\]
Of course we can assume $u_{i}\neq u_{j}$, for all $1\leq i\neq j\leq q$. Therefore, there exist $1\leq i\neq j\leq q$, such that some $a_{k}$ appears in the expression of $u_{i}$ but does not appear in the expression of $u_{j}$. Hence, multiplying the expression $\gamma_{1}u_{1}+\cdots +\gamma_{q}u_{q}=0$ by $a_{k}$, we obtain a new expression with fewer terms, all having nonzero coefficients. This contradicts the minimality of $q$. Hence, $\mathcal{O}$ is a linearly independent subset of $R$ with $2^{n+1}-1$ elements, which is a contradiction because $2^{n+1}-1>n$. We conclude $\dim R=\infty$. 
\end{proof}

Recall that the $e$-center of a $G$-graded algebra $R$ is defined by $Z(R)_{e}=Z(R)\cap R_{e}$. Since in a regular grading $R$ we have $R_{e}\subseteq Z(R)$, it follows that the $e$-center of $R$ is $R_{e}$.  From now on, until the end of this section, assume that $\operatorname{char}(K)=0$. The next result shows that, under suitable conditions on the $e$-center of a finite-dimensional $G$-graded regular algebra, it contains a regular subalgebra graded by a finite abelian group.

\begin{Proposition} Let  $G$ be a finite group and $R$ be a finite dimensional $G$-graded regular algebra with the commutation function $\theta$.  Suppose that the $e$-center of $R$ is $K$. Then there exists a maximal finite abelian subgroup $Q$ of $G$, and $\alpha\in Z^{2}(Q,K^{\ast})$ such that $R$ has a graded subalgebra isomorphic to $K^{\alpha}Q$.  
\end{Proposition}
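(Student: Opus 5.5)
Take $Q$ to be a maximal abelian subgroup of $G$ and work with the $Q$-graded subalgebra $R_{Q}=\bigoplus_{q\in Q}R_{q}$. The aim is to find nonzero elements $X_{q}\in R_{q}$, one for each $q\in Q$, that are invertible in $R$. Once we have them, $X_{q}X_{q'}\in R_{qq'}$ is invertible, and $R_{qq'}$ is one-dimensional, so $X_{q}X_{q'}=\alpha(q,q')X_{qq'}$ for some $\alpha(q,q')\in K^{\ast}$. Associativity then makes $\alpha$ a $2$-cocycle on $Q$. The span of the $X_{q}$ is a graded subalgebra isomorphic to $K^{\alpha}Q$, since elements in distinct components are linearly independent.

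The key steps, in order, are as follows.

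First, show that $R_{e}=K$ forces every homogeneous component to be small. For $a\in R_{g}$ and $b\in R_{g^{-1}}$ we have $ab\in R_{e}=K$.

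Second, show that each $R_{g}$ contains an invertible element. By regularity condition (1), for $n=\dim R+1$ there exist $a_{1},\dots,a_{n}\in R_{g}$ with $a_{1}\cdots a_{n}\neq 0$. Hence $R_{g}$ is not nil of bounded index in a finite-dimensional algebra, so $R_{g}$ contains a non-nilpotent element. Working inside the regular algebra, one uses condition (2) to rearrange products. For instance, take $a\in R_{g}$ and $b\in R_{g^{-1}}$ with $ab\neq 0$; such a pair exists by (1) applied to the tuple $(g,g^{-1})$. Then $ab=\lambda\in K^{\ast}$, so $a$ has the right inverse $\lambda^{-1}b$. In a finite-dimensional algebra a one-sided inverse is two-sided, so $a$ is invertible.

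Third, show $\dim R_{g}=1$. If $a\in R_{g}$ is invertible, then $R_{g}=aR_{e}=Ka$, because any $c\in R_{g}$ satisfies $a^{-1}c\in R_{e}=K$. Set $X_{q}$ to be such an $a$ for each $q\in Q$.

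Fourth, identify the commutation function. On $Q$ the elements commute pairwise, so $\theta(q,q')=\alpha(q,q')\alpha(q',q)^{-1}$ is consistent with the example of twisted group algebras. The maximality of $Q$ plays no structural role beyond the choice of $Q$; in fact the argument gives $R\cong K^{\alpha}G$ as graded algebras once all the $X_{g}$ are chosen. The abelian restriction is only needed so that the commutation function is a bicharacter on $Q$, as in Remark \ref{recovering}.

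I expect the second step, producing invertible homogeneous elements, to be the main obstacle. Specifically, one must guarantee that $R_{g}R_{g^{-1}}\neq 0$, which follows directly from regularity (1) with $n=2$. One must also pass from a one-sided inverse to a two-sided one, which is standard in finite dimension. The rest is bookkeeping with the cocycle identity (\ref{e1}) and Remark \ref{isomorphism.twisted} to fix $\alpha$ up to coboundary.
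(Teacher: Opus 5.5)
Your proof is correct, and after a shared first step it takes a different and more direct route than the paper.

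Both arguments begin the same way: regularity applied to the pair $(q,q^{-1})$, together with $R_e=K$, gives $a_q\in R_q$ with an inverse lying in $R_{q^{-1}}$.

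The paper then proceeds as follows. It takes the $Q$-graded subalgebra $\mathcal{B}$ generated by the $a_q$. It notes that $\mathcal{B}_e=K$ and that $\mathcal{B}$ is $Q$-graded regular, using \cite[Lemma 22]{Eli1} to know that $\theta$ restricts to a bicharacter on $Q$. It then invokes the external result \cite[Proposition 16]{L.P.K.2} to identify $\mathcal{B}$ with $K^{\alpha}Q$.

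You avoid that citation. Since $a^{-1}=\lambda^{-1}b\in R_{g^{-1}}$, you get $R_g=a(a^{-1}R_g)\subseteq aR_e=Ka$. So every homogeneous component is one-dimensional, and the cocycle comes straight from associativity. Your argument is self-contained, and it also proves more. Under the hypothesis $R_e=K$, $R$ itself is graded isomorphic to $K^{\alpha}G$, so $\dim R=|G|$. The conclusion then holds for every subgroup $Q$, which means maximality and commutativity of $Q$ play no role. The paper's route stays inside the abelian framework of \cite{L.P.K.2} and does not bring this out.

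One small point to fix. In your second step, the sentence saying $R_g$ is ``not nil of bounded index'' is unnecessary. As stated it is also not justified: for an arbitrary subspace of nilpotent matrices, products of any length can be nonzero. The $(g,g^{-1})$ argument alone does the job, so you can simply delete that sentence.
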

\begin{proof} Let $Q$ be a maximal abelian subgroup of $G$. By \cite[Lemma 22]{Eli1}, the restriction $\theta\!\mid_{Q\times Q}$ is a bicharacter of $Q$, and by regularity, given $q \in Q$ and the pair $(q, q^{-1}) \in Q^{2}$, there exist elements $a_{q} \in R_{q}$ and $a_{q^{-1}} \in A_{q^{-1}}$ such that $0\neq a_{q} a_{q^{-1}} \in R_{e}$. Since $R_{e}=K$, without loss of generality, we may assume that $a_{q} a_{q^{-1}} = 1$, that is, $a_{q^{-1}} = a_{q}^{-1}$. Denote by $\mathcal{B}$ the $Q$-graded subalgebra of $R$ generated by $\{a_{q} \mid q \in Q\}$. Then $\mathcal{B}_{e} = K$, and clearly $\mathcal{B}$ is a $Q$-graded regular algebra. By \cite[Proposition 16]{L.P.K.2}, there exists $\alpha\in Z^{2}(Q,K^{\ast})$ such that $\mathcal{B}$ is isomorphic to a twisted group algebra $K^{\alpha} Q$.
\end{proof}

\begin{Lemma}
\label{Lemma.twisted}
Let  $R$ be a finite dimensional $G$-graded regular algebra with the commutation function $\theta$. If $R$ is graded simple, then $R$ is isomorphic to a twisted group algebra.
\end{Lemma}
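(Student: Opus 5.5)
We aim to show that every homogeneous component $R_{g}$ is one-dimensional and spanned by an invertible element. The twisted group algebra structure then follows by choosing these elements as a basis. The argument takes place under the standing assumption $\operatorname{char}(K)=0$. It has two main steps, which we carry out in order: first identify the $e$-component $R_{e}$ with $K$, and then show that every homogeneous component contains an invertible element.

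\emph{Step 1: $R_{e}$ is a finite-dimensional graded-simple commutative algebra, and $R_{e}=K$.} By regularity, $R_{e}\subseteq Z(R)$. We first show that $R_{e}$ is a field. Let $0\neq z\in R_{e}$. Since $z$ is central, $zR$ is a two-sided ideal. It is also graded, because $zR_{h}\subseteq R_{h}$. As $1\in R$, we have $zR\neq 0$, so graded simplicity gives $zR=R$. Hence $zb=1$ for some $b\in R$. Taking the $e$-component of this equation, we may assume $b\in R_{e}$. Thus $R_{e}$ is a finite-dimensional commutative domain, hence a field extension of $K$. Since $K$ is algebraically closed, $R_{e}=K$.

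\emph{Step 2: every nonzero homogeneous element is invertible.} Let $0\neq a\in R_{g}$. By regularity condition (1), for the pair $(g,g^{-1})$ there exist $a'\in R_{g}$ and $b\in R_{g^{-1}}$ with $a'b\neq 0$. This produces some nonzero product in $R_{e}=K$, so some element of $R_{g}$ is invertible. However, we need invertibility of every nonzero $a\in R_{g}$, and this is the main obstacle.

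To handle it, we consider the graded two-sided ideal generated by $a$. It equals $R$ by graded simplicity, so $1=\sum_{i} x_{i}ay_{i}$, and we may take $x_{i}$, $y_{i}$ homogeneous with $\deg x_{i}\cdot g\cdot \deg y_{i}=e$. We then apply the commutation rule (2) of Definition \ref{Definition.general.reg} to the tuple $(\deg x_{i},g,\deg y_{i})$ with the permutation moving $a$ to the front; this is a valid element of $S_{3,\mathbf{g}}$ whenever $g\cdot\deg x_{i}\cdot \deg y_{i}=e$, which holds in this situation. It gives $x_{i}ay_{i}=\lambda_{i}\,a\,x_{i}y_{i}$ with $\lambda_{i}\in K^{\ast}$. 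Hence $1=a\big(\sum_{i}\lambda_{i}x_{i}y_{i}\big)$, so $a$ has a right inverse, and symmetrically a left inverse. If the product condition on the permutation fails for some term, we instead use $R_{e}=K$: since $\deg(x_{i}y_{i})=g^{-1}$, the products $ax_{i}y_{i}$ lie in $R_{e}=K$, and we argue with the linear functional $c\mapsto ac$ on $R_{g^{-1}}$.

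\emph{Step 3: conclusion.} If $a,a'\in R_{g}$ are nonzero, then $a'a^{-1}\in R_{e}=K$, so $\dim R_{g}=1$. The grading is regular, so $\operatorname{Supp}(R)=G$, and we can pick $0\neq X_{g}\in R_{g}$ for every $g\in G$. Then $X_{g}X_{h}$ is a nonzero element of $R_{gh}$, since it is a product of invertible elements. Therefore $X_{g}X_{h}=\alpha(g,h)X_{gh}$ with $\alpha(g,h)\in K^{\ast}$. Associativity of $R$ gives the cocycle identity (\ref{e1}), so $R\cong K^{\alpha}G$ as $G$-graded algebras.
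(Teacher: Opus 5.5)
\textbf{There is a gap in Step 2, though it is easy to close, and with the fix your route is genuinely different from the paper's.}

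\textbf{The gap.} Write $x=\deg x_i$ and $y=\deg y_i$. The constraint you have is $xgy=e$, so $y=g^{-1}x^{-1}$. Your reordering $x_iay_i\mapsto a\,x_iy_i$ has degree product $gxy=gxg^{-1}x^{-1}$. This equals $e$ only when $g$ and $x$ commute. Since this section is precisely about non-abelian $G$, that permutation need not lie in $S_{3,\mathbf{g}}$, and your assertion ``which holds in this situation'' is false. The fallback does not rescue it either. We have $\deg(x_iy_i)=xg^{-1}x^{-1}$, a conjugate of $g^{-1}$ rather than $g^{-1}$ itself, so $a\,x_iy_i$ need not lie in $R_e$. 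The functional argument on $R_{g^{-1}}$ is also never carried out. As written, Step 2 does not prove that every nonzero homogeneous element is invertible.

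\textbf{Two easy repairs.}
\begin{enumerate}
\item Use the cyclic reordering $x_iay_i\mapsto a\,y_ix_i$ instead. Since $gyx=x^{-1}(xgy)x=e$, this permutation does lie in $S_{3,\mathbf{g}}$. It gives $1=a\sum_i\lambda_i y_ix_i$, so $a$ is right invertible, hence invertible because $R$ is finite dimensional.
\item Better, drop the ideal argument entirely, because the ``main obstacle'' is illusory. The elements $a'\in R_g$ and $b\in R_{g^{-1}}$ that regularity provides give $a'b=c\in K^{\ast}$, since $R_e=K$. So $a'$ is invertible with $a'^{-1}=c^{-1}b\in R_{g^{-1}}$. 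Then every $a\in R_g$ satisfies $aa'^{-1}\in R_e=K$, i.e. $R_g=Ka'$. This is exactly the computation in your Step 3, and it already gives $\dim R_g=1$ and the invertibility of every nonzero homogeneous element.
\end{enumerate}

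\textbf{Comparison with the paper.} The paper invokes the Bahturin--Zaicev structure theorem, which writes $R$ as $M_r(K^{\alpha}Q)$ with an elementary grading. It rules out $r>1$ because the degree-$e$ element $e_{11}$ fails to commute with $h\,e_{12}$, contradicting condition (2). It then gets $Q=G$ from $\operatorname{Supp}(R)=G$. Your repaired route is self-contained. It uses only $R_e\subseteq Z(R)$, graded simplicity, finite dimensionality and algebraic closedness of $K$. In particular it needs no characteristic hypothesis, even though you announced one. The paper's argument is shorter only because it imports the classification of finite-dimensional graded-simple algebras.
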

\begin{proof} By regularity, we must have $\operatorname{Supp}(R)=G$. By 
 \cite[Theorem 3]{bahturin2005finite}, there exist a subgroup $Q$ of $G$, a 2-cocycle $\alpha\in H^{2}(Q,K^{\ast})$ and an $r$-tuple $\mathbf=(g_{1},\ldots, g_{r}))$, $r\geq 1$, such that $R$ is graded isomorphic to $S=M_{r}(K^{\alpha}Q)$ with the graded basis $ge_{ij}$, $g\in G$, $1\le,i,j\le r$ such that $\deg ge_{ij}=g_{i}^{-1}(\deg g)g_{j}\}$. But if $r>1$, take $a=h\,e_{12}\in S$ and $b=e_{11}$. We have $\deg a=g_{1}^{-1}hg_{2}$, $\deg b=e$ and $ab=0$ while $ba=a\ne 0$, which is a contradiction. Thus $r=1$ and since $\operatorname{Supp}(A)=G$, we have $Q=G$, hence $A\cong K^{\alpha}G$.  
\end{proof}

\begin{Corollary} 
\label{diag.equally.1}
Let $R$ be a finite dimensional $G$-graded regular graded algebra with the commutation function $\theta$. Then, for all $g\in G$, $\theta(g,g)=1$. 
\end{Corollary}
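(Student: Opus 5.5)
This is essentially the contrapositive of Proposition~\ref{infinite.dimensional}, once we know that $\theta(g,g)\in\{\pm1\}$ for every $g\in G$.

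First, fix $g\in G$. Since $g\in C_G(g)$, the value $\theta(g,g)=\theta((g,g),(1,2))$ is defined. Apply it to $a,b\in R_g$: we get $ab=\theta(g,g)\,ba$, and also $ba=\theta(g,g)\,ab$, so $ab=\theta(g,g)^2ab$. Regularity condition (1) applied to the tuple $(g,g)$ gives $a,b\in R_g$ with $ab\neq 0$. Hence $\theta(g,g)^2=1$, that is, $\theta(g,g)=\pm1$. This is just the statement, recalled before Proposition~\ref{infinite.dimensional}, that $\psi\colon g\mapsto\theta(g,g)$ takes values in $\{\pm1\}$.

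Next, suppose some $g$ had $\theta(g,g)=-1$. Then Proposition~\ref{infinite.dimensional} would give $\dim R=\infty$, contradicting the assumption that $R$ is finite dimensional. Therefore $\theta(g,g)=1$ for all $g\in G$.

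There is no real obstacle here. The only point to check is that the sign dichotomy holds for arbitrary, possibly non-abelian, $G$. That is exactly what the two-line computation above verifies, using only condition (2) for the transposition in $S_{2,(g,g)}$ and condition (1). Note that $\operatorname{char}(K)\neq2$ is what makes the values $1$ and $-1$ genuinely distinct, which matters in how Proposition~\ref{infinite.dimensional} is used.
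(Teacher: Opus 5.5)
Your proof is correct and matches the paper's intended argument: the corollary is given without a separate proof because it is the contrapositive of Proposition~\ref{infinite.dimensional} once $\theta(g,g)\in\{\pm1\}$ (the map $\psi$), just as in the ``in particular'' clause of the analogous lemma in Section~\ref{s4}. Your direct check that $\theta(g,g)^2=1$, using condition (2) for the transposition and condition (1), supplies the one fact the paper only cites, and your argument does not need the section's standing assumption $\operatorname{char}(K)=0$.
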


\begin{Remark} 
\label{Remark.Wedderburn.Malcev}
Let $R$ be a finite dimensional $G$-graded algebra. By \cite[Theorem 4.4]{SusanMontgomery}, the Jacobson radical $J(R)$ is a graded ideal of $R$. Since $K$ is an algebraically closed field of characteristic $0$, by \cite[Corollary 2.8]{cstefan1999wedderburn}, the graded version of the Wedderburn--Malcev theorem for finite groups holds for $R$. In other words, there exists a $G$-graded semisimple subalgebra $B$ of $R$ such that $R = B\oplus J(R)$, where $B = D_{1} \oplus \cdots \oplus D_{r}$ and each $D_{i}$ is a $G$-graded simple subalgebra of $D$ for $1 \leq i \leq r$.
\end{Remark}
\begin{Proposition} 
\label{Proposition.KG}
Let $R$ be a finite dimensional $G$-graded regular algebra with the commutation function $\theta$. Then, there exists  a cocycle $\alpha\in Z^{2}(G,K^{\ast})$ such that $K^{\alpha}G$ is a subalgebra of $R$. 
\end{Proposition}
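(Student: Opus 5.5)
We use the graded Wedderburn--Malcev decomposition of Remark \ref{Remark.Wedderburn.Malcev}: $R=B\oplus J(R)$ with $B=D_{1}\oplus\cdots\oplus D_{r}$, where each $D_{i}$ is a $G$-graded simple subalgebra. The plan is to find one summand $D_{i}$ that is again $G$-graded regular. Lemma \ref{Lemma.twisted} then shows $D_{i}\cong K^{\alpha}G$ for some $\alpha\in Z^{2}(G,K^{\ast})$, and this copy sits inside $R$ as a subalgebra.

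First, since $R_{e}\subseteq Z(R)$ and $1\in R_{e}$, the idempotents $1_{D_{i}}$ are homogeneous of degree $e$, hence central in $R$. So the decomposition $1=\sum_i 1_{D_i}+j$ interacts well with products. We will argue that $J(R)\cap R_e$ consists of nilpotent central elements. Set $f_{i}=1_{D_{i}}$; these are orthogonal central idempotents of $R$ lying in $R_{e}$. Because $B$ is unital with the unit of $R$ (as $R/J(R)\cong B$ and we lift $1$), we have $1=f_{1}+\cdots+f_{r}$. Hence $R=\bigoplus_{i} f_{i}R$ is a decomposition into graded ideals, and each $f_{i}R=D_{i}\oplus f_{i}J(R)$.

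Next we show that some $f_{i}R$ is regular, using regularity condition (1). Let $N$ be the nilpotency index of $J(R)$. Suppose that for each $i$ there is a tuple $\mathbf{g}^{(i)}$ with all products of homogeneous elements of those degrees vanishing in $f_{i}R$. Consider the concatenated tuple $\mathbf{g}=\mathbf{g}^{(1)}\cdots\mathbf{g}^{(r)}$ and any homogeneous $a_{k}$ of the corresponding degrees. Since the $f_i$ are central and sum to $1$, we have $a_{1}\cdots a_{n}=\sum_{i}f_{i}a_{1}\cdots a_{n}$. Each summand contains the subproduct $f_{i}\cdot(\text{block }i)=0$, and the $f_{i}$ are central, so the whole product is $0$. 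This contradicts regularity. Hence some $A:=f_{i}R$ satisfies (1). Condition (2) is inherited by any graded subalgebra, so $A$ is a $G$-graded regular algebra (with unit $f_i$).

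Now we pass from $A=D_{i}\oplus f_iJ(R)$ to $D_{i}$. Take a tuple $\mathbf{g}$ and repeat it $N$ times. Regularity of $A$ gives homogeneous elements with nonzero product. The aim is to show this forces a nonzero product of homogeneous elements of $D_{i}$ in the degrees $\mathbf{g}$. Write each $a_{k}=d_{k}+j_{k}$ with $d_{k}\in (D_{i})_{g_k}$ and $j_{k}\in J$; this is possible since the decomposition $R=B\oplus J$ is graded. Expand the product of length $Nn$. Every term containing at least $N$ factors from $J$ vanishes. Every surviving term therefore has fewer than $N$ factors from $J$, so some full block of $n$ consecutive factors consists entirely of $d$'s. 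If every product of $D_i$-elements in degrees $\mathbf{g}$ were zero, all terms would vanish. Hence $D_{i}$ satisfies (1) and is $G$-graded regular and graded simple.

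The main obstacle is exactly this block-pigeonhole step, together with checking that $(D_{i})_{g}$ is the $D_i$-component of $R_g$ under the graded Wedderburn--Malcev splitting. The latter follows because $B$ and $J$ are graded subspaces. Finally, Lemma \ref{Lemma.twisted} gives $D_{i}\cong K^{\alpha}G$, a subalgebra of $R$. If one insists on a unital subalgebra, note that we could even argue $r=1$ using centrality, but this is not needed.
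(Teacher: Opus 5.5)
Your proof is correct. It uses the same ingredients as the paper (graded Wedderburn--Malcev, Lemma~\ref{Lemma.twisted}, nilpotency of $J(R)$), but the key step is organized differently.

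\emph{The paper's route.} It applies Lemma~\ref{Lemma.twisted} to every graded simple component at once, getting $D_i\cong K^{\alpha_i}Q_i$ with $Q_i=\operatorname{Supp}(D_i)$. It then uses a single nilpotency argument. Suppose no $Q_i$ equals $G=\{g_1,\dots,g_t\}$. Each $D_\ell$ misses some degree, and $D_\ell D_{\ell'}=0$ for $\ell\neq\ell'$. Hence every product $a_1\cdots a_t$ with $a_k\in R_{g_k}$ has zero $B$-component, so it lies in $J(R)$, and $N$-fold repetitions vanish.

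\emph{Your route.} You first split off a regular graded ideal $f_iR$, using the central idempotents $f_i\in R_e$ and concatenation of tuples. You then remove the radical by the block-pigeonhole argument, showing that some $D_i$ is itself $G$-graded regular.

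\emph{What each buys.} Your version is longer, but it applies Lemma~\ref{Lemma.twisted} strictly within its hypotheses. The paper invokes the lemma for components not yet known to be regular. That is only legitimate because the lemma's proof uses condition (1) solely to get full support; the reduction to a $1\times 1$ matrix algebra over $K^{\alpha}Q$ needs only commutation with $R_e$. You also obtain the stronger fact that some graded simple component of $B$ is regular. The paper's version is shorter, since full support of one $D_j$ is all it needs. Your justification of $1=f_1+\cdots+f_r$ can be made precise by noting that $1_R-1_B$ is an idempotent lying in $J(R)$, hence zero.

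Drop your closing aside that one could show $r=1$; it is false. Take $R=K^{\alpha}G\oplus K$ with the second summand placed in degree $e$, so $R_e=KX_e\oplus K$ and $R_g=KX_g$ for $g\neq e$. This is a finite-dimensional $G$-graded regular algebra with two graded simple components. Moreover, for $G\neq\{e\}$ no homogeneous element of degree $g\neq e$ is invertible, so no graded copy of $K^{\alpha}G$ in $R$ can contain $1_R$. The subalgebra in the statement therefore has to be allowed to be non-unital, with identity $f_i$. That is exactly what both your argument and the paper's produce.
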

\begin{proof} By Remark \ref{Remark.Wedderburn.Malcev} we can write $R=D\oplus J(R)$, where $D=D_{1}\oplus\cdots  \oplus D_{r}$ direct sum (as algebras) of $G$-graded simple subalgebras of $R$. By Lemma \ref{Lemma.twisted}, for all $1\leq i\leq r$, there exists a subgroup $Q_{i}$  of $G$ and $\alpha_{i}\in Z^{2}(Q_{i},K^{\ast})$ such that $D_{i}$ is graded isomorphic to $K^{\alpha_{i}}Q_{i}$. Suppose $Q_{i}\neq G$, for all $1\leq i\leq r$ and write $G=\{g_{1},\ldots, g_{t}\}$, $g_{1}=e$. In this case, since $D_{\ell}D_{\ell'} = 0$ whenever $\ell \neq \ell'$, we conclude that a product $a_{1}\cdots a_{t}$, with $a_{i} \in R_{g_{i}}$, is nonzero if and only if there exists $1\leq k\leq r$ such that $a_{k} \in J(R)_{g_{k}}$, and in this case $a_{1}\cdots a_{t} \in J(R)$. However, this contradicts the regularity of $R$, since $J(R)$ is a nilpotent ideal. Therefore, there exists $1\leq j\leq r$ such that $Q_{j}=G$. 
\end{proof}
\begin{Corollary} 
\label{dimension.|G|}
Let $R$ be a finite dimensional $G$-graded regular algebra with the commutation function $\theta$. If $\dim R=|G|$, then $R$ is isomorphic to $K^{\alpha}G$ for some $\alpha\in Z^{2}(G,K^{\ast})$. 
\end{Corollary}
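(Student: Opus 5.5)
By Proposition \ref{Proposition.KG}, $R$ contains a copy of a twisted group algebra $K^{\alpha}G$ for some $\alpha\in Z^{2}(G,K^{\ast})$. I would argue that this copy is a graded subalgebra, and then compare dimensions.

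First I extract the graded structure from the proof of Proposition \ref{Proposition.KG}. There, using the graded Wedderburn--Malcev decomposition of Remark \ref{Remark.Wedderburn.Malcev}, one writes $R=D_{1}\oplus\cdots\oplus D_{r}\oplus J(R)$. Each $D_{i}$ is a $G$-graded simple subalgebra, graded isomorphic to $K^{\alpha_{i}}Q_{i}$ by Lemma \ref{Lemma.twisted}, and some index $j$ has $Q_{j}=G$. So $D_{j}\subseteq R$ is a $G$-graded subalgebra with $\dim (D_{j})_{g}=1$ for every $g\in G$. In particular $\dim D_{j}=|G|$.

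Next, I use the hypothesis $\dim R=|G|$. Since $D_{j}\subseteq R$ and $\dim D_{j}=|G|=\dim R$, we get $D_{j}=R$. Hence $R$ is graded isomorphic to $K^{\alpha_{j}}G$, and we set $\alpha=\alpha_{j}$.

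Incidentally, the same conclusion also follows more directly. Regularity gives $R_{g}\neq 0$ for all $g$, so $\dim R=|G|$ forces $\dim R_{g}=1$. Choose $0\neq X_{g}\in R_{g}$. Regularity applied to the pair $(g,h)$ gives $X_{g}X_{h}\neq 0$, and since $R_{gh}=KX_{gh}$ we get $X_{g}X_{h}=\alpha(g,h)X_{gh}$ with $\alpha(g,h)\in K^{\ast}$. Associativity of $R$ then yields the cocycle identity (\ref{e1}), so $R\cong K^{\alpha}G$ without further work.

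The only step I expect to need care is confirming that the subalgebra produced in Proposition \ref{Proposition.KG} is genuinely all of $D_{j}$ with one-dimensional homogeneous components. This follows from Lemma \ref{Lemma.twisted}, since a twisted group algebra has one-dimensional components. The direct argument above avoids this point entirely, so I would present it as the main proof.
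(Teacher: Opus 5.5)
Your proposal is correct and contains two proofs. The first, via Proposition~\ref{Proposition.KG} and a dimension count, is exactly the paper's argument. For the bare isomorphism you do not need to revisit that proof: any subalgebra isomorphic to $K^{\alpha}G$ has dimension $|G|=\dim R$ and so equals $R$. Knowing that this subalgebra is the graded component $D_j$ matters only if you want the isomorphism to be graded.

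Your second, direct argument takes a genuinely different and more elementary route, and it is also correct. It uses only the cases $n=1$ and $n=2$ of regularity condition (1), namely $R_g\neq\{0\}$ and $R_gR_h\neq\{0\}$, together with associativity. It produces the cocycle explicitly as the structure constants in $X_gX_h=\alpha(g,h)X_{gh}$, and it gives a graded isomorphism $K^{\alpha}G\to R$. It never uses:
\begin{itemize}
\item the commutation function $\theta$;
\item the graded Wedderburn--Malcev theorem of Remark~\ref{Remark.Wedderburn.Malcev};
\item the structure theorem for graded simple algebras behind Lemma~\ref{Lemma.twisted};
\item the nilpotency of $J(R)$.
\end{itemize}
Consequently it does not need the standing assumption $\operatorname{char}(K)=0$ under which this part of Section~3 is developed, and it proves the corollary in any characteristic.

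What the paper's route buys is context. There the corollary drops out of a structural statement: some graded simple component of $R$ is a copy of $K^{\alpha}G$, which has real content when $\dim R>|G|$. For this particular statement, however, your direct argument is the cleaner and more general proof, and it is the right one to present as the main proof.
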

\begin{proof}
    This follows directly from Proposition \ref{Proposition.KG}, because there exists a graded subalgebra of $R$ isomorphic to $K^{\alpha}G$ and $\dim K^{\alpha}G=|G|$.
\end{proof}

\begin{Remark}
\label{ray.classes}
Let $R$ be a $G$-graded regular grading with commutation function $\theta$.
\begin{enumerate}
    \item[(a)] Suppose $\dim R<\infty$. Then, by \cite[Lemma 32]{Eli1} there exists $\alpha\in Z^{2}(G,K^{\ast})$ such that the commutation of $K^{\alpha}G$ is $\theta$. 
    \item[(b)] Consider $H_{\psi}$ as in Proposition \ref{infinite.dimensional} and suppose $G\neq H_{\psi}$. Then, by \cite[Lemma 37]{Eli1}, there exists $\alpha\in Z^{2}(G,K^{\ast})$ such that for $\mathcal{B}:=K^{\alpha}G$, with the $\mathbb{Z}_{2}$-grading $\mathcal{B}=\mathcal{B}_{H_{\psi}}\oplus \mathcal{B}_{G\setminus H_{\psi}}$, the Grassmann envelope $E(\mathcal{B})$ is a $G$-graded regular grading with commutation function $\theta$. Moreover, it can be seen that if the regular decomposition of $E(\mathcal{B})$ is minimal, then $\mathcal{B}=\mathcal{B}_{H_{\psi}}\oplus \mathcal{B}_{G\setminus H_{\psi} }$ is a $\mathbb{Z}_{2}$-graded simple algebra.  
    \item[(c)] Consider $R=K^{\alpha}G$. An element $g\in G$ is said to be $\alpha$-regular element if $\alpha(g,h)=\alpha(h,g)$, for all $h\in C_{G}(g)$. A class of conjugate elements of $G$ consisting of $\alpha$-regular elements is called a \text{ray class} with respect to $\alpha$. By \cite[Lemma 34]{Eli1} any ray class produces unique (up to a multiplication by a constant) central element in $R$. It follows that $R$ is simple if and only if the commutation induced by $\alpha$ is non-degenerate. 
\end{enumerate}
\end{Remark}

\begin{Example}
\label{Examples.Schur.Multiplier}
 Given a finite group $G$, we denote by $\mathscr{M}(G)$ its Schur Multiplier (see \cite{rotman2009introduction}).
If $G$ is a cyclic group, with $|G|\geq 3$, then by \cite[Proposition 1.5.5]{Karpilovsky.vol.2} it follows that $\mathscr{M}(G)$ is trivial. Therefore, in light of Remark \ref{isomorphism.twisted} and Remark \ref{ray.classes}, if $R$ is finite dimensional $G$-graded regular algebra with $G$ being a cyclic group of order greater than or equal to $3$, $R$ must be a commutative algebra. If $G\cong S_{3}$, the permutation group on $3$ letters, or $G \cong \mathbb{Q}_{8}$, the quaternion group of order $8$, then by \cite[Corollary 10.1.27]{Karpilovsky.vol.2} and \cite[Proposition 12.5.5]{Karpilovsky.vol.2}, respectively, it also follows that $\mathscr{M}(G)$ is trivial. Thus in these cases, $K^{\alpha}G\cong KG$, for all $\alpha\in Z^{2}(G,K^{\ast})$. 
\end{Example}

\section{Regular quantum commutative algebras}\label{s4} 

In this section, we will work with the we do not assume that our algebras are group-graded. This definition is exactly the original one, given by Regev and Seeman in \cite{regevz2}, although in that paper the authors worked with algebras whose regularity had arisen from group gradings.

First, we recall the notion of a set grading on an algebra.
\begin{Definition}\label{set_grading}
Given an algebra $R$ and a set $S$, we say $R$ is $S$-graded (or  graded by $S$) if there exist vector subspaces $R_{s}\neq \{0\}$, $s\in S$, such that 
\begin{equation}\label{e_set_grading}
   \Gamma: R=\bigoplus_{s\in S} R_{s}
\end{equation}
and for any $s,t\in S$ either $R_{s}R_{t}=\{0\}$ or there exists $f(s,t)\in S$ such that $R_{s}R_{t}\subseteq R_{f(s,t)}$.  A set grading \ref{e_set_grading} is realized as a group-grading by a group $G$ if: (1) $S\subset G$; (2) $R_{g}=R_{s}$, if $g\in S$, and $R_{g}=\{0\}$, otherwise. (3)  If $f(g,h)$ is well-defined then $f(g,h)=gh$.
\end{Definition}

\begin{Definition}\cite[Definition 2.3]{regevz2}\label{dQCD} Let $R$ be an associative algebra. Consider a vector space decomposition 
\begin{equation}\label{eQCD}
 \mathscr{R}\colon\quad R=R_{1}\oplus \cdots \oplus R_{m}.
\end{equation}
Set $S=\{1,\ldots,m\}$. We call (\ref{eQCD}) a \textit{regular quantum commutative decomposition} of $R$ if the following hold:
\begin{enumerate}
    \item[(a)] For any $n\in \mathbb{N}$, given an $n$-tuple $(i_{1},\ldots, i_{n})\in S^{n}$, there exists $a_{i_{j}}\in R_{i_{j}}$, $1\leq j\leq n$, such that \[a_{i_{1}}\cdots a_{i_{n}}\neq 0;\] 
    \item[(b)] There is a function $\theta:S\times S\to K^\ast$ such that for each $i,j$,  $1\leq i,j\leq m$, $a\in R_{i}$ and $b\in R_{j}$,  one has 
        \[
        ab=\theta(i,j)ba.
        \]  
\end{enumerate}
The function $\theta$ is called the \textit{regular quantum commutative function of $R$ with respect to the decomposition $\mathscr{R}$}. The number $m$ is called  the \textit{quantum length} of $\mathscr{R}$. An algebra $R$ admitting $\rR$ and satisfying (b) is called \emph{$(\rR,\theta)$-commutative}. If the \emph{datum} $(\rR,\theta)$ is fixed, we simply say that $R$ is a \emph{regular quantum commutative algebra}. We write $R=(\rR,\theta)$.
\end{Definition}
We denote by $M^{R}=(\theta(i,j))$ the \textit{quantum decomposition matrix} (or just the \textit{decomposition matrix}) of a regular quantum commutative algebra $R=(\mathscr{R},\theta)$. 

\begin{Example}
\label{regular.grading.Regev.Seeman}
If $G$ is a finite abelian group then any $G$-graded regular algebra in the sense of Remark \ref{recovering} is a regular quantum commutative algebra. In this case, the regular quantum commutative function is the skew-symmetric bicharacter on $G$. 
\end{Example}
\begin{Remark}
\label{commutative.}
     It is worth noting that every  regular quantum commutative algebra which is commutative can be viewed, in a trivial way, as a regular graded commutative: if $R=(\mathscr{R},1)$ is a commutative regular quantum commutative algebra, then $R$ becomes regularly graded by the trivial group $\{e\}$. Moreover, it is clear that if $R=(\mathscr{R},\theta)$ is a non-commutative regular quantum commutative algebra, then it cannot be graded by the trivial group.
\end{Remark}


\begin{Lemma}
\label{regular quantum commutative.relations}
Let $R=(\rR,\theta)$ be a regular quantum commutative algebra.  Then, for any $1\leq i,j\leq m$, the \textit{regular quantum commutative relations hold}: $\theta(i,i)^2=1$, $\theta(i,j)=(\theta(j,i))^{-1}$ .
\end{Lemma}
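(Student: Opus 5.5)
The plan is to use condition (a) of Definition \ref{dQCD} to produce nonzero products, and then apply the commutation rule (b) twice.

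For the second relation, fix $1\leq i,j\leq m$. By condition (a) applied to the tuple $(i,j)$, there are $a\in R_{i}$ and $b\in R_{j}$ with $ab\neq 0$. Applying (b) in both orders gives
\[
ab=\theta(i,j)ba=\theta(i,j)\theta(j,i)ab .
\]
Hence $(1-\theta(i,j)\theta(j,i))ab=0$. Since $ab\neq 0$, we get $\theta(i,j)\theta(j,i)=1$, that is, $\theta(i,j)=\theta(j,i)^{-1}$. Note that $ba\neq 0$ as well, because $\theta(i,j)\in K^{\ast}$.

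For the first relation, the case $j=i$ above only yields $\theta(i,i)^{2}=1$ if we are careful. Take $a,a'\in R_{i}$ with $aa'\neq 0$, which exist by (a) applied to $(i,i)$. Applying (b) to the pair $(a,a')$ and then to the pair $(a',a)$ gives
\[
aa'=\theta(i,i)a'a=\theta(i,i)^{2}aa'.
\]
Cancelling the nonzero element $aa'$ gives $\theta(i,i)^{2}=1$. Equivalently, this is the second relation specialised to $j=i$, where it reads $\theta(i,i)=\theta(i,i)^{-1}$.

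There is no real obstacle here. The only point needing care is that cancellation requires a nonzero product. This is exactly what regularity (a) guarantees for the length-two tuples, and it is the only place (a) is used.
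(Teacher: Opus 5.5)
Your proposal is correct and follows the same route as the paper: use regularity (a) to obtain a nonzero product of length two, apply the commutation rule (b) in both orders, and cancel. The only difference is cosmetic. The paper cancels $b_jb_i$, whose nonvanishing it leaves implicit, while you cancel the product $ab$ that regularity directly provides, which is slightly cleaner.
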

\begin{proof}
    Let $1\leq i,j\leq m$. By regularity, there exist $a_{i}$, $a_{i}'$, $b_{i}\in R_{i}$, $b_{j}\in R_{j}$, such that $a_{i}a_{i}'\neq 0$ and $b_{i}b_{j}\neq 0$. Notice the following
\begin{align*}
    b_{j}b_{i}=\theta(j,i)b_{i}b_{j}=\theta(j,i)\theta(i,j)b_{j}b_{i}
\end{align*}
thus, $\theta(j,i)\theta(i,j)=1$, i.e $\theta(i,j)=(\theta(j,i))^{-1}$. Moreover $a_{i}a_{i}'=\theta(i,i)a_{i}'a_{i}=(\theta_{i,i})^{2}a_{i}a_{i}'$, i.e $(\theta(i,i))^{2}=1$. 
\end{proof}
\begin{Lemma} Let $R=(\rR,\theta)$ be a regular quantum commutative algebra. If there exists $1\leq i\leq m$ such that $\theta(i,i)=-1$, then $\dim R=\infty$. In particular, if $R$ is finite dimensional then $\theta(i,i)=1$, for all $1\leq i\leq m$. 
\end{Lemma}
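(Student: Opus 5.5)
The plan is to adapt the proof of Proposition \ref{infinite.dimensional} to the ungraded setting. Suppose $\theta(i,i)=-1$ for some $i$, and assume for contradiction that $\dim R=n<\infty$.

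\textbf{Step 1: elements of $R_i$ square to zero.} For $a\in R_i$ we have $a^2=\theta(i,i)a^2=-a^2$. Since $\operatorname{char}(K)\neq 2$, this gives $a^2=0$.

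\textbf{Step 2: produce a long nonzero product.} By condition (a) of Definition \ref{dQCD}, applied to the constant $(n+1)$-tuple $(i,\ldots,i)$, there are $a_1,\ldots,a_{n+1}\in R_i$ with $a_1\cdots a_{n+1}\neq 0$. Elements of $R_i$ anticommute, so each subproduct $a_{j_1}\cdots a_{j_r}$ (with $j_1<\cdots<j_r$) is, up to sign, a factor of a reordering of the full product. Hence every such subproduct is nonzero.

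\textbf{Step 3: linear independence of subproducts.} Let $\mathcal{O}$ be the set of nonempty ordered subproducts $a_{j_1}\cdots a_{j_r}$ with $j_1<\cdots<j_r$, so $|\mathcal{O}|=2^{n+1}-1$. To show $\mathcal{O}$ is linearly independent, take a nontrivial relation $\sum_{S}\gamma_S u_S=0$ of minimal length, where $u_S$ is the subproduct indexed by $S\subseteq\{1,\ldots,n+1\}$.
\begin{itemize}
\item If the relation has at least two terms, pick distinct $S,T$ in its support and an index $k\in S\setminus T$ (swap $S$ and $T$ if needed).
\item Multiply the relation by $a_k$. Terms whose index set contains $k$ vanish, since $a_k$ can be moved next to its copy (up to sign) and $a_k^2=0$.
\item Terms whose index set omits $k$ become $\pm u_{S'\cup\{k\}}$.
\item The resulting relation is nontrivial: the map $S'\mapsto S'\cup\{k\}$ is injective on sets omitting $k$, and Step 2 guarantees these products are nonzero elements of $\mathcal{O}$.
\item The new relation is also strictly shorter, since the $S$-term was killed. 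This contradicts minimality.
\end{itemize}
It remains to rule out a single-term relation $\gamma u_S=0$ with $\gamma\neq 0$, which is impossible because $u_S\neq 0$. So $\mathcal{O}$ is linearly independent, giving $2^{n+1}-1\le n$, a contradiction.

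\textbf{Step 4: the "in particular".} If $R$ is finite dimensional, then no $\theta(i,i)$ equals $-1$. Since Lemma \ref{regular quantum commutative.relations} gives $\theta(i,i)^2=1$, we conclude $\theta(i,i)=1$ for all $i$.

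The only delicate point is in Step 3. After multiplying by $a_k$, one must check that the surviving terms are still distinct elements of $\mathcal{O}$ with nonzero coefficients, and this rests on the nonvanishing of all subproducts established in Step 2.
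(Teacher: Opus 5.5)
Your proposal is correct and follows essentially the same route as the paper, which simply defers to the proof of Proposition \ref{infinite.dimensional}. That proof takes a nonzero product of $n+1$ anticommuting square-zero elements of $R_i$ and proves its $2^{n+1}-1$ ordered subproducts linearly independent by multiplying a minimal-length relation by a suitable $a_k$. Your version is slightly more careful than the paper's: you check that every subproduct is nonzero, which is needed to exclude a one-term relation, and that the terms surviving multiplication by $a_k$ are distinct with nonzero coefficients.
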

\begin{proof}
    The proof is exactly the same as in Proposition \ref{infinite.dimensional}. 
\end{proof}
\begin{Lemma}
\label{non.nilp}
Let $R=(\rR,\theta)$ be a finite dimensional regular quantum commutative algebra of quantum length $m$. Then, for each $1\leq i \leq m$ there exist  a non-nilpotent element $w_{i}\in R_{i}$, such that $w_{1}\cdots w_{m}$ is not nilpotent.
\end{Lemma}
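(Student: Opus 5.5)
The plan is to reduce everything to the commutative subalgebras generated by the individual components. I then use quantum commutation to turn $(w_{1}\cdots w_{m})^{N}$ into a scalar multiple of $w_{1}^{N}\cdots w_{m}^{N}$.

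\emph{Step 1: each component generates a commutative algebra.} Since $R$ is finite dimensional, the preceding lemma gives $\theta(i,i)=1$ for all $i$, so $ab=ba$ for all $a,b\in R_{i}$. Hence the subalgebra $A_{i}$ generated by $R_{i}$ (with $1$) is a finite-dimensional commutative algebra. Put $d=\dim R$. By Fitting's lemma, $x^{d}A_{i}=e_{x}A_{i}$ for every $x\in A_{i}$, where $e_{x}$ is an idempotent, and $x^{d}$ is a unit of $e_{x}A_{i}$. If every element of $R_{i}$ were nilpotent, then $A_{i}$ would be a nil commutative algebra spanned by products of elements of $R_{i}$. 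A commutative algebra generated by nilpotents is nilpotent, so all products of $d+1$ elements of $R_{i}$ would vanish. This contradicts regularity (a) applied to the tuple $(i,\dots,i)$. So non-nilpotent elements exist in each $R_{i}$, and the set of them is the nonempty Zariski open set $\{x\in R_{i}: x^{d}\neq 0\}$.

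\emph{Step 2: generic elements absorb all long products.} Write $A_{i}=\bigoplus_{s}A_{i}f_{s}$ with $f_{s}$ the primitive idempotents, each $A_{i}f_{s}$ local with nilpotent maximal ideal. Let $e_{i}$ be the sum of those $f_{s}$ for which $R_{i}f_{s}\not\subseteq \operatorname{rad}(A_{i}f_{s})$. For $x$ in a nonempty Zariski open subset of $R_{i}$, the $f_{s}$-component of $x$ is a unit of $A_{i}f_{s}$ for every such $s$; this is an intersection of finitely many nonempty opens in the irreducible space $R_{i}$. For such an $x$ we get $e_{x}=e_{i}$. Moreover, for $N\geq 2d$ every product $b_{1}\cdots b_{N}$ with $b_{k}\in R_{i}$ satisfies $b_{1}\cdots b_{N}=e_{i}b_{1}\cdots b_{N}$, because the components in the remaining local summands are radical and vanish after $d$ factors. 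Since $x^{N}$ is a unit of $e_{i}A_{i}$, it follows that $b_{1}\cdots b_{N}=x^{N}u$ for some $u\in A_{i}$. Choose such generic $w_{i}\in R_{i}$ for every $i$.

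\emph{Step 3: the product is not nilpotent.} Fix $N\geq 2d$. By regularity (a) applied to the tuple $(1,\dots,m,1,\dots,m,\dots)$ of length $Nm$, there are $a_{i}^{(k)}\in R_{i}$ with $\prod_{k}\prod_{i}a_{i}^{(k)}\neq 0$. By (b) we can reorder this product to $c\prod_{k}a_{1}^{(k)}\cdots\prod_{k}a_{m}^{(k)}$ with $c\in K^{\ast}$. By Step 2, $\prod_{k}a_{i}^{(k)}=w_{i}^{N}u_{i}$ with $u_{i}\in A_{i}$. Each $u_{i}$ is a polynomial in elements of $R_{i}$, so it quantum-commutes past the later factors up to scalars. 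The reordered product therefore equals a scalar times $w_{1}^{N}\cdots w_{m}^{N}\cdot v$, or more generally a sum of scalar multiples of terms $w_{1}^{N}\cdots w_{m}^{N}\cdot(\text{monomial})$. Since it is nonzero, we get $w_{1}^{N}\cdots w_{m}^{N}\neq 0$. Again by (b), $(w_{1}\cdots w_{m})^{N}=c'\,w_{1}^{N}\cdots w_{m}^{N}$ with $c'\in K^{\ast}$. As this holds for all large $N$, $w_{1}\cdots w_{m}$ is not nilpotent, and each $w_{i}$ is non-nilpotent by Step 1.

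The main obstacle is Step 2. The naive route is to polarize $(x_{1}\cdots x_{m})^{N}$, but this produces factors $(N!)^{m}$, which fail in positive characteristic. The idempotent/Fitting argument is meant to avoid this. The delicate points are, first, checking that a single generic $w_{i}$ realizes the full idempotent $e_{i}$ simultaneously for all local summands. Second, one must check that moving $u_{i}$ (an element of $A_{i}$ rather than of $R_{i}$) past other factors only costs nonzero scalars. This holds because $u_{i}$ is a linear combination of monomials in $R_{i}$, and each such monomial quantum-commutes with elements of $R_{j}$.
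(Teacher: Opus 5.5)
Your proof is correct, but it follows a different route from the paper's.

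\textbf{The paper's route.} The paper works globally. It takes the subalgebra $W$ generated by all products $t_1\cdots t_m$ with $t_i\in R_i$, and spans $W$ by products of such elements. If all of these were nilpotent, $W$ would be nilpotent by a Wedderburn-type theorem (cited from Herstein), which regularity forbids. So some such product $x$ is non-nilpotent. Writing $x=wu$ with $w=w_1\cdots w_m$, the identities $x^k=\alpha w^k u^k$ and $w^\ell=\mu w_1^\ell\cdots w_m^\ell$ transfer non-nilpotency to $w$ and then to each $w_i$. That argument is short and gives bare existence. It uses neither $\theta(i,i)=1$ nor any internal structure of the components.

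\textbf{Your route.} You work component by component. This costs you the preceding lemma ($\theta(i,i)=1$, which uses finite dimensionality and $\operatorname{char}K\neq 2$) to make $A_i$ commutative. You then need the local decomposition of $A_i$, and the fact that $K$ is infinite, to choose $w_i$ invertible on every local summand where $R_i$ is not radical.

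\textbf{What your route buys.} You get a stronger conclusion. Each $w_i$ may be taken arbitrarily in a nonempty Zariski-open subset of $R_i$ that depends on $R_i$ alone, so the choices are independent of one another. Moreover, every product of $N\geq 2d$ elements of $R_i$ lies in $w_i^N A_i$. Regularity then yields $w_1^N\cdots w_m^N\neq 0$ for all large $N$ directly, with no appeal to a nilpotency theorem for spanning sets of nilpotents.

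\textbf{A small slip in Step 1.} $A_i$ contains $1$, so it cannot be nil. What you want is that the non-unital subalgebra generated by $R_i$ would be nil, hence nilpotent with all products of $d+1$ elements vanishing. The Fitting-lemma remark there is not needed for the argument.
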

\begin{proof} Let $W$ be a (finite-dimensional) subalgebra of $R$ generated by the set $U$ of all products  $t_{1}\cdots t_{m}$ where $t_{i}\in R_{i}$,  $i=1,\ldots m$. A finite basis $\mathcal{W}$ for $W$ can be chosen among the products of the elements of $U$. If all the elements in $\mathcal{W}$ are nilpotent, it follows by \cite[Theorem 2.3.1]{herstein}, that $W$ is a nilpotent subalgebra of $R$. Therefore, there exists $k\in \mathbb{N}$ such that $W^{k}=\{0\}$. Since $R$ is regular, for any natural $n$ there exist $a_i^{(j)}\in R_i$, such that if we set $b_j=a_1^{(j)}\cdots a_m^{(j)}$, $j=1,\ldots n$,  then $b_1\cdots b_n\ne 0$. But each $b_j$ is in $W$ and so $b_1\cdots b_n=0$ for any $n>k$. This contradiction shows that at least one of the elements of $\mathcal{W}$ is not nilpotent.

Suppose that $x$ is a non-nilpotent element of $\mathcal{W}$. We can write $x = w u$, where $w = w_1 \dots w_m$ with $w_i \in R_i$ and $u\in\mathcal{W}$. Due to the quantum commutativity, we have
$x^k = (w u)(w u) = \alpha w^k u^k$, for some $\alpha \in K^{\ast}$. Thus, if $w^k = 0$ for some $k$, then also $x^k = \lambda w^k u^k = 0$, which is not the case. So $w=w_1\cdots w_m$ is not nilpotent. Now for each $\ell$ there exists $\mu\in K^\ast$ such that $w^\ell=\mu w_1^\ell\cdots w_m^\ell$. It follows that if one of $w_i$ is nilpotent then also $w$ is nilpotent, which was proven to be false. So we have found non-nilpotent $w_i\in R_i$, $i=1,\ldots, m$, such that their product $w_1\cdots w_m$ is not nilpotent. The proof is complete.
\end{proof}

\begin{Proposition}
	\label{commutattive.PI}
	Let $R=(\rR,\theta)$ be a regular quantum commutative algebra of quantum length $m$. Then, $R$ is a PI-algebra. 
\end{Proposition}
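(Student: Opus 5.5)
Regev's codimension technique (cited in the introduction as the method used) is the natural route: show that $c_n(R)$ grows at most exponentially, whereas $n!$ does not, so some nonzero multilinear polynomial of degree $n$ vanishes on $R$ for $n$ large.

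First reduce to homogeneous substitutions. A multilinear $f(x_1,\dots,x_n)=\sum_{\sigma\in S_n}\lambda_\sigma x_{\sigma(1)}\cdots x_{\sigma(n)}$ is an identity of $R$ if and only if $f(a_1,\dots,a_n)=0$ whenever each $a_k$ is homogeneous, say $a_k\in R_{i_k}$. This holds because $R=R_1\oplus\cdots\oplus R_m$ and $f$ is multilinear. Fix a type $\mathbf{i}=(i_1,\dots,i_n)\in S^n$. By condition (b) of Definition \ref{dQCD}, for homogeneous $a_k\in R_{i_k}$ we have
\[
a_{\sigma(1)}\cdots a_{\sigma(n)}=\varepsilon(\mathbf{i},\sigma)\,a_1\cdots a_n ,
\]
where $\varepsilon(\mathbf{i},\sigma)\in K^\ast$ is the product of the factors $\theta(i_{\sigma(k)},i_{\sigma(l)})$ over the inversions of $\sigma$. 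This follows by writing $\sigma$ as a product of adjacent transpositions. Hence
\[
f(a_1,\dots,a_n)=\Big(\sum_{\sigma}\lambda_\sigma\varepsilon(\mathbf{i},\sigma)\Big)a_1\cdots a_n .
\]
So $f$ is an identity as soon as, for every type $\mathbf{i}$, the linear form $L_{\mathbf{i}}(\lambda)=\sum_\sigma\lambda_\sigma\varepsilon(\mathbf{i},\sigma)$ vanishes.

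Next, bound the number of distinct linear forms. The scalar $\varepsilon(\mathbf{i},\sigma)$ depends on $\mathbf{i}$ only through the function $k\mapsto i_k$, and there are $m^n$ such types. Therefore $f$ is an identity whenever $\lambda\in K^{n!}$ lies in the common kernel of at most $m^n$ linear forms on $K^{n!}$. That kernel has dimension at least $n!-m^n$, which is positive once $n!>m^n$. Choosing such $n$ yields a nonzero multilinear identity of degree $n$. As a byproduct, $c_n(R)\le m^n$. Regularity condition (a) is not needed at all, only the quantum commutation (b).

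The main obstacle is the first step: justifying that every permuted product of homogeneous elements equals a \emph{fixed} nonzero scalar multiple of $a_1\cdots a_n$, with the scalar independent of the chosen elements. Iterating (b) gives this. One only has to check that the scalar does not depend on the chosen decomposition of $\sigma$ into transpositions, but this is immaterial: any decomposition produces some valid scalar, and we only need one scalar depending solely on $(\mathbf{i},\sigma)$. Fixing one reduced word for each $\sigma$ settles this. The remaining linear-algebra count is routine.
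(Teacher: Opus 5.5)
Your proof is correct and is essentially the paper's argument. Like the paper, you reduce to homogeneous substitutions, write each permuted product as an inversion-product scalar times $a_1\cdots a_n$, and obtain a nonzero solution of a homogeneous system of $m^n$ equations in $n!$ unknowns once $n!>m^n$. Your side remarks are also valid: only condition (b) is used, and the argument gives $c_n(R)\le m^n$.
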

\begin{proof}  Let $n\in \mathbb{N}$ and 
	\[
	g(x_{1},\ldots, x_{n})=\sum_{\sigma\in S_{n}} \lambda_{\sigma}x_{\sigma(1)}\cdots x_{\sigma(n)},\quad \lambda_{\sigma}\in K,
	\]
	a multilinear polynomial in $K\langle X\rangle$. Let $\ell_{1},\ldots, \ell_{n}\in \{1,\ldots, m\}$ and let $b_{1}\in R_{\ell_{1}},\dots, b_{n}\in R_{\ell_{n}}$ be homogeneous elements. Then, for any $\sigma\in S_{n}$, by regularity we have
	\[
	b_{\sigma(1)}\cdots b_{\sigma(n)}=\Lambda_{\sigma}(\ell_{1},\ldots, \ell_{n})(b_{1}\cdots b_{n}),
	\]
	where 
	\[
	\Lambda_{\sigma}(\ell_{1},\ldots, \ell_{n})=\prod_{\substack{i<j \\ \sigma(i)>\sigma(j)}}\theta(\ell_{\sigma(i)},\ell_{\sigma(j)}).
	\]
	Therefore,
	\begin{equation}
		\label{eq.multilinear}
		g(b_{1},\ldots, b_{n})=\Bigg(\sum_{\sigma\in S_{n}}\Lambda_{\sigma}(\ell_{1},\ldots, \ell_{n})\lambda_{\sigma}\Bigg)(b_{1}\cdots b_{n}).    
	\end{equation}
	
	Choose $n$ large enough such that $n!>m^{n}$, and consider the linear homogeneous system 
	\begin{equation}
		\label{system}
		\sum_{\sigma\in S_{n}}\Lambda_{\sigma}(\ell_{1},\ldots, \ell_{n})c_{\sigma}=0,\quad \ell_{1},\ldots, \ell_{n}\in \{1,\ldots, m\},
	\end{equation}
	where $c_{\sigma}$ are the unknowns. Since (\ref{system}) has $m^{n}$ equations and $n!>m^{n}$, it admits a nonzero solution $(\lambda_{\sigma})_{\sigma\in S_{n}}$, and thus by (\ref{eq.multilinear}) $0\neq g(x_{1},\ldots, x_{n})\in T(R)$.
\end{proof}

\begin{Example} The Weyl algebra $\mathcal{A}_{1} = K\langle x,y\rangle/(xy - yx - 1)$ does not admit a regular quantum commutative decomposition. The same holds for the universal enveloping algebra $U(\mathfrak{g})$ of any non-abelian Lie algebra $\mathfrak{g}$. 
\end{Example}

\begin{Theorem}
\label{nth.root}
Let $R=(\rR,\theta)$ be a regular quantum commutative algebra of quantum length $m$ and suppose $\operatorname{char}(K)=0$. Then all $\theta(i,j)$ are roots of unity.
\end{Theorem}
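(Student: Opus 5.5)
The plan is to reduce the statement to a known fact: the quantum plane $K_q[x,y]=K\langle x,y\rangle/(yx-qxy)$ is a PI-algebra only when $q$ is a root of unity. Fix $1\leq i,j\leq m$ and set $q=\theta(j,i)$; by Lemma \ref{regular quantum commutative.relations}, $\theta(i,j)=q^{-1}$. I will show that every multilinear identity of $R$ is also an identity of $K_q[x,y]$. By Proposition \ref{commutattive.PI}, $R$ satisfies a nonzero multilinear identity $g=\sum_{\sigma\in S_n}\lambda_\sigma x_{\sigma(1)}\cdots x_{\sigma(n)}$. If $q$ were not a root of unity, this would contradict the fact that $K_q[x,y]$ is not PI.

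\emph{Step 1: the scalar conditions coming from $R$.} For every subset $A\subseteq\{1,\dots,n\}$, let $\ell_k=i$ if $k\in A$ and $\ell_k=j$ otherwise. By condition (a) of Definition \ref{dQCD}, there are $b_k\in R_{\ell_k}$ with $b_1\cdots b_n\neq 0$. Formula (\ref{eq.multilinear}) then gives
\[
S_A:=\sum_{\sigma\in S_n}\Lambda_\sigma(\ell_1,\dots,\ell_n)\lambda_\sigma=0 .
\]
Here $\Lambda_\sigma$ is a product of factors $\theta(i,i),\theta(j,j)\in\{\pm1\}$ and $\theta(i,j)^{\pm1},\theta(j,i)^{\pm1}$. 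So $S_A$ is a fixed Laurent polynomial expression in $q$, with the signs $\theta(i,i),\theta(j,j)$ entering as well. If some diagonal value equals $-1$, I will use the quantum plane with the appropriate sign twist, i.e.\ a quantum-Grassmann-type quotient. It is cleaner to work with the two-generator algebra $Q$ on $x,y$ defined by the relations $x^2$-twist $\theta(i,i)$, $y^2$-twist $\theta(j,j)$, and $yx=qxy$. In the generic case this is just $K_q[x,y]$ or a Grassmann-type extension of it.

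\emph{Step 2: the same conditions make $g$ vanish on the model algebra.} In $Q$, evaluate $g$ on arbitrary linear combinations $z_k=\sum_r t_{kr}x_r+\sum_r s_{kr}y_r$. To handle the diagonal signs uniformly, I take several copies $x_r,y_r$ of the generators, all obeying the same pairwise commutation rules with the prescribed $\theta$'s. By multilinearity, the result is a sum over assignments of types, that is, over subsets $A$. Each term is a homogeneous monomial multiplied by the same scalar $S_A$, since the reordering factors in $Q$ are literally the same $\Lambda_\sigma$. Because all $S_A=0$, $g$ is an identity of $Q$. In particular, when $\theta(i,i)=\theta(j,j)=1$, $g$ is an identity of $K_q[x,y]$. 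When a diagonal sign is $-1$, the monomials $x_1\cdots x_N y_1\cdots y_N$ in the multi-generator model are nonzero, and I pass to the even part: the elements $x_1x_2$ and $y_1y_2$ commute up to the factor $q^4$, generate a copy of $K_{q^4}[X,Y]$, and $q$ is a root of unity iff $q^4$ is. Either way, some $K_{q'}[X,Y]$ with $q'$ a power of $q$ is PI.

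\emph{Step 3 (main obstacle): $K_{q'}[X,Y]$ is not PI if $q'$ is not a root of unity.} I would first try the route through the quantum torus $T=K_{q'}[X^{\pm1},Y^{\pm1}]$, which is the Ore localization of $K_{q'}[X,Y]$. Since $K_{q'}[X,Y]$ is a domain, Posner's theorem implies that if it is PI then so is $T$. A direct computation with monomials $X^aY^b$ shows that $T$ is simple with center $K$ when $q'$ is not a root of unity: a central element must commute with $X$ and $Y$, and this forces $q'^a=q'^b=1$. Kaplansky's theorem then says a primitive PI-algebra is finite-dimensional over its center, whereas $T$ is infinite-dimensional over $K$, a contradiction. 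Hence $q'$ is a root of unity, so $\theta(j,i)$ and $\theta(i,j)=\theta(j,i)^{-1}$ are roots of unity as well. The delicate points are the Grassmann-type bookkeeping in Step 2 and justifying the localization step; if the latter causes trouble, I would fall back on a direct argument with the identity evaluated on the monomials $X^{a}Y^{b}$.
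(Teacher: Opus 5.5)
\textbf{Gap in Step 2.} Your overall strategy can be made to work: push the identities of $R$ down to a quantum plane, then use that $K_{q'}[X,Y]$ is not PI when $q'$ is not a root of unity. But Step 2 as written has a genuine gap.

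The only information you extract from $R$ is $S_A=0$ for all $A\subseteq\{1,\dots,n\}$, i.e.\ the vanishing of $g$ on single homogeneous elements. These conditions carry no information about $q$. As in the proof of Proposition~\ref{commutattive.PI}, they are $2^n$ homogeneous linear equations in the $n!$ unknowns $\lambda_\sigma$. So for $n\geq 4$ there is a nonzero multilinear $g$ with all $S_A=0$ for \emph{every} value of $q$. Hence the inference ``all $S_A=0$, therefore $g$ is an identity of $Q$'' cannot be valid: if it were, $K_q[x,y]$ would be PI for every $q$, contradicting your own Step~3.

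The underlying error is that vanishing of a multilinear polynomial on the span of the generators does not control its values on the monomials $x^ay^b$, and it is these monomials that span $Q$. Compare $x_1x_2+x_2x_1$, which vanishes on the generators of the Grassmann algebra but not on $1$. The even-part device also fails. If, say, $\theta(i,i)=-1$, then $x_r^2=0$ and $(x_1x_2)^2=-x_1^2x_2^2=0$. So $x_1x_2$ and $y_1y_2$ cannot generate a copy of the domain $K_{q^4}[X,Y]$.

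\textbf{The missing idea and the repair.} You need to apply regularity to longer tuples cut into blocks of products; this is exactly the device in the paper's proof (the elements $c_k=b_{k,1}b_{k,2}$ and $d_k=b_{k,1}\cdots b_{k,t}$). Fix integers $a_k,b_k\geq 0$ for $1\leq k\leq n$. Apply condition (a) of Definition~\ref{dQCD} to the tuple made of $2a_1$ copies of $i$, then $2b_1$ copies of $j$, then $2a_2$ copies of $i$, and so on. Grouping the factors gives elements $w_k$ (with $w_k=1_R$ for an empty block) such that $w_1\cdots w_n\neq 0$ and $w_kw_l=q^{4(b_ka_l-a_kb_l)}w_lw_k$. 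The diagonal signs disappear because $\theta(i,i)^4=\theta(j,j)^4=1$.

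These are exactly the commutation factors of $X^{a_k}Y^{b_k}$ and $X^{a_l}Y^{b_l}$ in $K_{q^4}[X,Y]$, where $YX=q^4XY$. Write $g(X^{a_1}Y^{b_1},\dots,X^{a_n}Y^{b_n})=c\,X^{a_1}Y^{b_1}\cdots X^{a_n}Y^{b_n}$; then $g(w_1,\dots,w_n)=c\,w_1\cdots w_n$ with the same scalar $c$. Since $g(w_1,\dots,w_n)=0$ and $w_1\cdots w_n\neq 0$, we get $c=0$. These monomials span $K_{q^4}[X,Y]$ and $g$ is multilinear, so $g$ is an identity of $K_{q^4}[X,Y]$, and no auxiliary algebra $Q$ is needed.

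\textbf{Comparison with the paper.} With this repair, your Step~3 finishes the proof: embed the quantum torus into the Posner ring of quotients, then use its simplicity with center $K$ and Kaplansky's theorem. This is a genuinely different argument from the paper's. The paper instead obtains an identity of the special form $\sum_s\alpha_sy^sxy^{n+1-s}$ from the Giambruno--Zaicev theorem and solves a Vandermonde system in $\xi^2,\xi^4,\dots$. Your route uses heavier ring theory, but it avoids the case split on whether $UT_2(K)\in\operatorname{var}(R)$. It also never uses $\operatorname{char}(K)=0$, since Proposition~\ref{commutattive.PI}, Posner's theorem and Kaplansky's theorem are characteristic-free.
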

\begin{proof} Let $1\leq i,j\leq m$ and denote by $\mathfrak{V}:=\operatorname{var}(R)$ the variety generated by $R$. First, suppose that $UT_{2}(K)\in \mathfrak{V}$. Since $UT_{2}(K)$ satisfies $[x_{1},x_{2}][x_{3},x_{4}]=0$, then also $R$ satisfies $[x_{1},x_{2}][x_{3},x_{4}]= 0$. By regularity, there exist $a\in R_{i}$ and $b\in R_{j}$ such that $ba\neq 0$. In this case,
\[
0=[a,b][a,b]=(ab-ba)(ab-ba)=(\theta(i,j)-1)^{2}ba,
\]
hence for all $i,j$, $\theta(i,j)=1$, as claimed. 

In the remaining case, $UT_{2}(K)\notin \mathfrak{V}$. By \cite[Theorem 7.3.1]{GZbook}, $R$ satisfies a polynomial identity of the form
\[
f(x,y)=\sum_{s=1}^{n+1}\alpha_{s}y^{s}xy^{((n+1)-s)}= 0,
\] are nonzero. The complete linearization of $f$
\[
\widehat{f}(x,y_{1},\ldots, y_{n+1})=\sum_{s=1}^{n+1}\sum_{\sigma\in S_{n+1}}\alpha_{s}y_{\sigma(1)}\cdots y_{\sigma(s)}xy_{\sigma(s+1)}\cdots y_{\sigma(n+1)}
\]
 is a polynomial identity of $R$. By regularity, there exist $a\in R_{i}$, $b_{1,1}$, $b_{1,2}$, $b_{2,1}$, $b_{2,2}$,\dots, $b_{n+1,1}$, $b_{n+1,2}\in R_{j}$ such that 
\[
ac_{1}\cdots c_{n+1}\neq 0,\quad \text{where} \quad c_{k}:=b_{k,1}b_{k,2},\quad 1\leq k\leq n+1.
\]
Since $\theta(j,j)^{2}=1$, given any $1\leq k,l\leq n+1$, we obtain
\begin{align*}
c_{k}c_{l} &=(b_{k,1}b_{k,2})(b_{l,1}b_{l,2})\\
&=\theta(j,j)^{4}(b_{l,1}b_{l,2})(b_{k,1}b_{k,2})=(b_{l,1}b_{l,2})(b_{k,1}b_{k,2})=c_{l}c_{k}.
\end{align*}
 Thus, the elements $c_{1}$,\dots, $c_{n+1}$ commute, and setting $\xi:=\theta(j,i)$, we get
 \begin{align*}
     0 &=\widehat{f}(a,c_{1},\ldots, c_{n+1})=\sum_{s=1}^{n+1}\sum_{\sigma\in S_{n+1}}\alpha_{s}c_{\sigma(1)}\cdots c_{\sigma(s)}ac_{\sigma(s+1)}\cdots c_{\sigma(n+1)}\\
     &= \sum_{s=1}^{n+1} \alpha_{s}\xi^{2s}a\Big(\sum_{\sigma\in S_{n+1}}c_{\sigma(1)}\cdots c_{\sigma(n+1)}\Big)=\Big(\sum_{s=1}^{n+1}\alpha_{s}\xi^{2s}(n+1)!\Big)ac_{1}\cdots c_{n+1} 
 \end{align*}
 since $ac_{1}\cdots c_{n+1}\neq 0$, it follows that $\sum_{s=1}^{n+1}\alpha_{s}\xi^{2s}=0$, i.e $\xi^{2}\Big(\sum_{s=1}^{n+1}\alpha_{s}\xi^{2s-2}\Big)=0$, and since $\xi\neq 0$ we have $\sum_{s=1}^{n+1}\alpha_{s}\xi^{2s-2}=0$.

 We proceed in the following way. Given $t\in \{4,6,\ldots, 2(n+1)\}$, by regularity there exists $a_{t}\in R_{i}$, $b_{1,1}$,\dots, $b_{1,t}$,\dots, $b_{n+1,1}$,\dots, $b_{n+1,t}\in R_{j}$ such that $a_{t}d_{1}\cdots d_{n+1}\neq 0$, where
\[
d_{k}:=b_{k,1}\cdots b_{k,t},\quad \text{for all}\quad 1\leq k\leq n+1
\]
and since $t$ is even, it can be seen that $d_{1}$,\dots, $d_{n+1}$ commute. Hence
\begin{align*}
    0 &=\widehat{f}(a_{t},d_{1},\ldots, d_{n+1})=\sum_{s=1}^{n+1}\sum_{\sigma\in S_{n+1}}\alpha_{s}d_{\sigma(1)}\cdots d_{\sigma(s)}a_{t}d_{\sigma(s+1)}\cdots d_{\sigma(n+1)}\\
     &= \sum_{s=1}^{n+1} \alpha_{s}\xi^{ts}a_{t}\Big(\sum_{\sigma\in S_{n+1}}d_{\sigma(1)}\cdots d_{\sigma(n+1)}\Big)=\Big(\sum_{s=1}^{n+1}\alpha_{s}\xi^{ts}(n+1)!\Big)a_{t}d_{1}\cdots d_{n+1} 
\end{align*}
which implies that $\sum_{s=1}^{n+1}\alpha_{s}\xi^{ts}=0$. Since $\xi\neq 0$, we get $\sum_{s=1}^{n+1}\alpha_{s}\xi^{ts-t}=0$. Now $(\alpha_1,\ldots,\alpha_{n+1})$ is a nonzero solution for the following system of linear equations:
\begin{eqnarray*}
  \alpha_{1}+\alpha_{2}\xi^{2}+\cdots+\alpha_{n+1}\xi^{2n} =  & 0 \\
  \alpha_{1}+\alpha_{2}\xi^{4}+\cdots+\alpha_{n+1}\xi^{4n}  = & 0 \\
     \ldots\ldots\ldots\ldots\ldots\ldots\ldots\ldots\ldots& \\
  \alpha_{1}+\alpha_{2}\xi^{2(n+1)}+\cdots+\alpha_{n+1}\xi^{2(n+1)n}  =  &0
\end{eqnarray*}

The determinant of this system is the Vandermonde's determinant $V(\xi^2,\xi^4,\ldots,\xi^{2(n+1)})$. Since the system has nonzero solution, the determinant is zero, hence for some $q\le k<\ell\le n+1$, we must have $\xi^{2k}=\xi^{2\ell}$. It follows that $\xi$ is a root of 1.
\end{proof}

\begin{Remark} 
In what follows, it is worth recalling that every finite-dimensional unital algebra can be regarded as a subalgebra of an algebra $\mathrm{End}\,(V)$ of endomorphisms of a finite-dimensional vector space $V$. One can also view $R$ as a subalgebra of the matrix algebra $M_n(K)$ for  $n=\dim V$. In this case, we will treat each element $u\in R$ as an $n\times n$ matrix.
\end{Remark}

Assuming the finite dimensional case, we can provide an alternative proof of the above result, which works over any field.

\begin{Theorem} 
\label{general.finite.dimensional.case}
Let $V$ be a vector space over $K$, $\dim V=n<\infty$. Let $f$, $g\in\operatorname{End}(V)$ be such that all $f^{n},g^{n},(fg)^{n}$ are nonzero. If $\xi \in K^{\ast}$ is such that $gf=\xi fg$, then $\xi^{n}=1$.
\end{Theorem}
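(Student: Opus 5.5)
The plan is to compare the characteristic polynomials of $fg$ and $gf$. Over any field, $fg$ and $gf$ have the same characteristic polynomial. Put $\chi(t)=\det(t\cdot 1-fg)=t^{n}+c_{1}t^{n-1}+\cdots+c_{n}$. From $gf=\xi fg$ we get
\[
\chi(t)=\det(t\cdot 1-\xi fg)=\xi^{n}\chi(t/\xi)=t^{n}+\xi c_{1}t^{n-1}+\cdots+\xi^{n}c_{n}.
\]
Comparing coefficients gives $c_{k}(\xi^{k}-1)=0$ for every $1\le k\le n$.

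Since $(fg)^{n}\neq 0$, the operator $fg$ is not nilpotent, so $\chi(t)\neq t^{n}$ and some $c_{k}\neq 0$. Hence $\xi^{k}=1$ for some $1\le k\le n$, so $\xi$ is a root of unity whose order $d$ satisfies $d\le n$.

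One can sharpen this by passing to $\overline{K}=K$ and looking at nonzero eigenvalues with multiplicities. The identity $\chi(t)=\xi^{n}\chi(t/\xi)$ says that the multiset of nonzero eigenvalues of $fg$ is stable under multiplication by $\xi$. It therefore splits into $\langle\xi\rangle$-orbits of size $d$ with constant multiplicity along each orbit. Consequently $d$ divides $r:=\operatorname{rank}(fg)^{n}$, the dimension of the Fitting-invertible part of $fg$. I would also use $f^{n},g^{n}\neq0$: from $gf^{k}=\xi^{k}f^{k}g$, the same argument applies to pairs such as $(f,g^{n})$ and gives further divisibility constraints.

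The main obstacle is upgrading $\xi^{d}=1$ with $d\le n$ (and $d\mid r$) to the claimed $\xi^{n}=1$. I expect this step to fail as stated. Take $V=K^{3}\oplus K$ with $n=4$ and $\omega$ a primitive cube root of unity. Let $f=\operatorname{diag}(1,\omega,\omega^{2})\oplus 0$ and $g=P\oplus 0$, where $P$ is the cyclic permutation matrix. Then $gf=\omega^{\pm1}fg$, and $f^{4},g^{4},(fg)^{4}$ are all nonzero because $f,g,fg$ are invertible on $K^{3}$. Yet $\omega^{4}\neq1$.

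So the argument I would actually write proves that $\xi$ is a root of unity with $\xi^{d}=1$ for some $d\le n$ dividing $\operatorname{rank}(fg)^{n}$. Equivalently, $\xi^{n!}=1$, or $\xi^{n}=1$ under the extra hypothesis that $fg$ is invertible (then $r=n$ and $d\mid n$). This is all that is needed for the application to regular quantum commutative algebras, namely that the values $\theta(i,j)$ are roots of unity.
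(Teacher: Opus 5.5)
You are right, and the paper is wrong here. The statement as written is false, so not reaching $\xi^{n}=1$ is not a gap in your argument.

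Your example checks out. With $D=\operatorname{diag}(1,\omega,\omega^{2})$ and $P$ the cyclic shift, $DP=\omega^{\pm1}PD$, so $gf=\xi fg$ with $\xi$ a primitive cube root of unity. Also $f^{4}=f$, $g^{4}=g$ and $(fg)^{4}$ are nonzero, yet $\xi^{4}=\xi\neq1$.

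A smaller example works in every characteristic $\neq2$, including characteristic $3$ where $\omega$ is unavailable. Take $V=K^{2}\oplus K$, $f=\operatorname{diag}(1,-1)\oplus0$ and $g=\bigl(\begin{smallmatrix}0&1\\1&0\end{smallmatrix}\bigr)\oplus0$. Then $\xi=-1$ and $n=3$, with $f^{3}=f$, $g^{3}=g$ and $(fg)^{3}\neq0$, but $\xi^{3}=-1\neq1$.

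The paper's proof breaks at the end of its main case. The root-space argument ($f(V_{\lambda})\subseteq V_{\xi\lambda}$, plus an eigenvector of $f$ with nonzero eigenvalue inside some $V(\lambda)$ with $\lambda\neq0$) only shows that $\xi$ is a root of unity. The closing ``by Cayley--Hamilton Theorem we get $\xi^{n}=1$'' is unsupported. In your example the $\xi$-orbit $\{1,\omega,\omega^{2}\}$ of eigenvalues of $g$ has length $3$, and the extra kernel direction raises $n$ to $4$ without constraining $\xi$.

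On what is actually provable, your route differs from the paper's and is cleaner. The paper tracks generalized eigenspaces of $g$ shifted by $f$. Its displayed computations of $fg(u)$ and $gf(u)$ treat the root vectors $u_{i}$ as eigenvectors, which is not justified. The root-of-unity conclusion can still be recovered by comparing the $V_{\xi^{\ell_{1}}\lambda}$-components of $\mu u=\sum_{i}f(u_{i})$.

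Your identity $\chi_{fg}(t)=\chi_{gf}(t)=\xi^{n}\chi_{fg}(t/\xi)$ needs no eigenvector bookkeeping, and it is quantitative. The order $d$ of $\xi$ satisfies $d\le n$ and $d\mid\operatorname{rank}(fg)^{n}$, so $\xi^{n}=1$ does hold when $fg$ is invertible. It also shows that only $(fg)^{n}\neq0$ matters. Indeed $(fg)^{n}=\xi^{n(n-1)/2}f^{n}g^{n}$, so the hypotheses on $f^{n}$ and $g^{n}$ are redundant.

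Two small corrections to your write-up:
\begin{itemize}
\item $\xi^{n!}=1$ is a consequence of $d\le n$, not an equivalent form.
\item The paper does use the exponent $n$ downstream, in the corollary that the $\theta(i,j)$ on $M_{n}(K)$ are $n$-th roots of unity. The root-of-unity version alone does not give this. There you must first obtain invertibility as in Proposition \ref{central.invertibility}, which relies only on the root-of-unity statement. Then apply your invertible case, or equivalently the determinant argument of Remark \ref{dimension}.
\end{itemize}
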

\begin{proof} If $\overline{K}$ is an algebraic closure of $K$ then the natural extensions $\overline{f}=\mathrm{id}_{\overline{K}}\otimes_K f$ and $\overline{g}=\mathrm{id}_{\overline{K}}\otimes_K g$ act on $\overline{V}=\overline{K}\otimes_K V$, satisfying the same conditions as those assumed for $f$ and $g$. Thus, in what follows, we assume that $K$ is algebraically closed.

     Given a characteristic root $\lambda$ of $g$, the root space $V_{\lambda}$ of $g$ is moved into the root space $V_{\xi\lambda}$. Indeed, a vector $v\in V$ is in $V_{\lambda}$ iff $(g-\lambda\operatorname{id}_V)^{n}(v)=\{0\}$. Now 
  \[
  f(g-\lambda\operatorname{id}_V)=\xi^{-1} gf-\lambda f=\xi^{-1}(g-(\xi\lambda)\operatorname{id}_V)f
  \]
  and so 
\[
  f(g-\lambda\operatorname{id}_V)^{n}(v)=\xi^{-n}(g-(\xi\lambda)\operatorname{id}_V)^{n}f(v)
  \]
  proving that $f(v)\in V_{\xi\lambda}$. 

For any root $\lambda$ of $g$ we consider $V(\lambda)$ which is the sum of all $V_{\xi^{k}\lambda}$ where $k=0,1,2,\ldots$. Obviously, $V$ is the sum of all $V(\lambda)$. By our calculation above, each $V(\lambda)$ is invariant under $f$. Since $f$ is not nilpotent, there is an eigenvalue $\lambda$ such that $V(\lambda)$ contains an eigenvector $u$ for $f$ with nonzero eigenvalue $\mu$: $f(u)=\mu u$.

Suppose $u\in V(\lambda)$ where $\lambda\ne 0$. There are linearly independent $u_{1},\ldots,u_{k}$, $u_{i}\in V_{\xi^{\ell_i}\lambda}$ such that 
  \begin{equation}\label{1}
   u=u_{1}+u_{2}+\cdots+u_{k}\quad \mbox{ where }\quad 0\neq u_{i}\in V_{\xi^{\ell_i}\lambda},\:1\leq \ell_1<\ell_2<\cdots\ell_k\leq n.
  \end{equation}
  Recalling $gf=\xi fg$, we obtain
 \begin{equation}\label{2}
  fg(u)=\lambda f(\xi^{\ell_{1}}u_{1}+\cdots+\xi^{\ell_{k}}u_{k})=(\lambda\xi^{\ell_{1}})f(u_{1})+\cdots+(\lambda\xi^{\ell_{k}})f(u_{k})
  \end{equation}
  and
  \begin{equation}\label{3}
  gf(u)=\mu g(u_{1}+\cdots+u_{k})=\mu(\lambda\xi^{\ell_{1}})u_{1}+\cdots+\mu(\lambda\xi^{\ell_{k}})u_{k}
  \end{equation}
  Now we have
   \[
   f(u_{1})\in V_{\xi^{\ell_{i_{1}+1}}\lambda},\ldots, f(u_{k})\in V_{\xi^{\ell_{i_{k}+1}}\lambda}.
   \]
   Since $\mu(\lambda\xi^{\ell_1})u_{1}\in V_{\xi^{\ell_{1}}\lambda}$ is the only nonzero root vector with root $\lambda\xi^{\ell_1}$ in (\ref{2}), this root must be equal to one of the roots
    \[
   \xi^{\ell_{i_{1}+1}}\lambda,\ldots,\xi^{\ell_{i_{k}+1}}\lambda
   \]
  appearing in (\ref{3}). Then $\lambda\xi^{\ell_{1}}=\lambda\xi^{\ell_{i}+1}$ where $i\in \{1,\ldots, k\}$. Then $\xi^{\ell_{1}}=\xi^{\xi_{i}+1}$ and so $\xi$ is a root of $1$, and by Cayley--Hamilton Theorem we get $\xi^{n}=1$.

   Finally, suppose $f$ is nilpotent on $W=\sum_{\lambda\neq 0}V(\lambda)$. Then $g^{n}(V)\subset W$ and $f^{n}(W)=\{ 0\}$. As a result, $g^{n}f^{n}=0$ hence $(fg)^{n}=0$, a contradiction. The proof is complete.
\end{proof}
\begin{Corollary} Let $R=(\rR,\theta)$ be a finite dimensional regular quantum commutative algebra of quantum length $m$. Then, all $\theta(i,j)$ must be roots of 1.
\end{Corollary}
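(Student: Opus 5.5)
The corollary follows from Theorem \ref{general.finite.dimensional.case} once we produce suitable endomorphisms. First, embed $R$ as a subalgebra of $\operatorname{End}(V)$, as in the Remark preceding that theorem. The left regular representation on $V=R$ works, since $R$ is unital, and then $n=\dim V=\dim R<\infty$. Fix $1\leq i,j\leq m$. We need $f\in R_{i}$ and $g\in R_{j}$, viewed as endomorphisms of $V$, such that $f^{n}$, $g^{n}$ and $(fg)^{n}$ are all nonzero. Once we have them, condition (b) of Definition \ref{dQCD} gives $gf=\theta(j,i)fg$, so Theorem \ref{general.finite.dimensional.case} yields $\theta(j,i)^{n}=1$. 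By Lemma \ref{regular quantum commutative.relations}, $\theta(i,j)=\theta(j,i)^{-1}$, so $\theta(i,j)$ is a root of unity as well.

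The main step is choosing $f$ and $g$. We cannot just apply regularity to the tuple $(i,\ldots,i,j,\ldots,j,\ldots)$, because it produces distinct elements rather than powers of a single element. Instead we use Lemma \ref{non.nilp}. It gives non-nilpotent $w_{k}\in R_{k}$ for each $k$ such that $w_{1}\cdots w_{m}$ is not nilpotent. Put $f=w_{i}$ and $g=w_{j}$. Then $f$ and $g$ are non-nilpotent, so $f^{n},g^{n}\neq 0$. For the product we use quantum commutativity: $(fg)^{\ell}=\lambda f^{\ell}g^{\ell}$ with $\lambda\in K^{\ast}$, and likewise $(w_{1}\cdots w_{m})^{\ell}=\mu\, w_{1}^{\ell}\cdots w_{m}^{\ell}$ with $\mu\in K^{\ast}$. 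If $i\neq j$, we reorder $w_{1}^{\ell}\cdots w_{m}^{\ell}$ up to a nonzero scalar so that $w_{i}^{\ell}w_{j}^{\ell}$ appears as a factor. If $(fg)^{\ell}=0$, then $f^{\ell}g^{\ell}=0$, which forces $(w_{1}\cdots w_{m})^{\ell}=0$, a contradiction. Hence $fg$ is not nilpotent and $(fg)^{n}\neq 0$.

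The case $i=j$ is trivial. By the finite-dimensional lemma preceding Lemma \ref{non.nilp}, $\theta(i,i)=1$. (Alternatively, take $f=g=w_{i}$, whose powers are all nonzero.)

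The only delicate point is this simultaneous non-nilpotency of $f$, $g$ and $fg$, and Lemma \ref{non.nilp} supplies exactly that. Everything else is a direct appeal to Theorem \ref{general.finite.dimensional.case}, which holds over any field, so no assumption on $\operatorname{char}(K)$ is needed.
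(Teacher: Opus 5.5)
Your argument is correct and follows the paper's route. The paper states this corollary as an immediate consequence of Theorem \ref{general.finite.dimensional.case}, applied inside $\operatorname{End}(V)$ to non-nilpotent elements from Lemma \ref{non.nilp}, and your check that $w_iw_j$ is non-nilpotent supplies the step the paper leaves implicit.

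One caution: conclude only that $\theta(j,i)$ is a root of unity, not that $\theta(j,i)^{\dim R}=1$. For $R=K\oplus M_{2}(K)$ with the regular grading of Proposition \ref{divisors}, $\dim R=5$ and some values of $\theta$ equal $-1$. So the ``$\xi^{n}=1$'' in the statement of Theorem \ref{general.finite.dimensional.case} (its final Cayley--Hamilton step) is too strong, but the root-of-unity part of its proof is all this corollary needs.
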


 \begin{Corollary} Let $R=M_{n}(K)$ and $\mathscr{R}\colon R=R_{1}\oplus \cdots \oplus R_{m}$ be a regular quantum commutative decomposition of quantum length $m$ with regular quantum function $\theta(i,j)$. Then, all $\theta(i,j)$ are $n$-th roots of 1.
\end{Corollary}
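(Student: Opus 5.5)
The plan is to reduce to Theorem \ref{general.finite.dimensional.case} with $V=K^{n}$, on which $R=M_{n}(K)$ acts naturally, so that $R=\operatorname{End}(V)$ with $\dim V=n$. Fix $1\leq i,j\leq m$. I want elements $f\in R_{i}$ and $g\in R_{j}$ such that $f^{n}$, $g^{n}$ and $(fg)^{n}$ are all nonzero. Quantum commutativity gives $gf=\theta(j,i)fg$, so Theorem \ref{general.finite.dimensional.case} with $\xi=\theta(j,i)$ yields $\theta(j,i)^{n}=1$. By Lemma \ref{regular quantum commutative.relations}, $\theta(i,j)=\theta(j,i)^{-1}$, so $\theta(i,j)^{n}=1$ as well.

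To produce $f$ and $g$, I would take $f=w_{i}$ and $g=w_{j}$, with the $w$'s as in Lemma \ref{non.nilp}. That lemma gives non-nilpotent $w_{k}\in R_{k}$ for every $k$, with $w=w_{1}\cdots w_{m}$ non-nilpotent.

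The remaining issue, and the main obstacle, is that Lemma \ref{non.nilp} only says the full product $w_{1}\cdots w_{m}$ is non-nilpotent. I also need $w_{i}w_{j}$ to be non-nilpotent, which is a statement about a partial product. I expect to handle this with quantum commutativity:
\begin{itemize}
\item For $i\neq j$, reorder $w$ to put $w_{i}w_{j}$ first, writing $w=\lambda\,(w_{i}w_{j})\,u$ with $\lambda\in K^{\ast}$ and $u$ the product of the remaining factors.
\item The factors pairwise quantum commute, so $w^{k}=\mu_{k}\,(w_{i}w_{j})^{k}u^{k}$ for some $\mu_{k}\in K^{\ast}$. This is the same argument as in the proof of Lemma \ref{non.nilp}.
\item Hence if $(w_{i}w_{j})^{n}=0$, then $w^{n}=0$, contradicting non-nilpotency of $w$.
\item For $i=j$, the claim $\theta(i,i)^{n}=1$ is immediate from $\theta(i,i)=1$ (finite dimension, lemma above). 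Alternatively, take $f=g=w_{i}$.
\end{itemize}

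A non-nilpotent endomorphism of an $n$-dimensional space has nonzero $n$-th power, because nilpotency is equivalent to the $n$-th power vanishing. So $f^{n}$, $g^{n}$ and $(fg)^{n}$ are all nonzero, and Theorem \ref{general.finite.dimensional.case} applies.
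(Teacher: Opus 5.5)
Your reduction is the route the paper itself takes: the corollary is stated without proof, as an immediate consequence of Theorem \ref{general.finite.dimensional.case} applied to the elements of Lemma \ref{non.nilp}. Your argument that $w_iw_j$ is non-nilpotent, via $w^k=\mu_k(w_iw_j)^ku^k$, is correct and supplies a detail the paper leaves implicit.

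The step that fails is the last one. Theorem \ref{general.finite.dimensional.case} is false as stated: the hypotheses $f^n,g^n,(fg)^n\neq 0$ and $gf=\xi fg$ do not imply $\xi^n=1$. In $M_3(K)$ take $f=\operatorname{diag}(1,-1,0)$ and $g=E_{12}+E_{21}$. Then $gf=-fg$, and $f^3=f$, $g^3=g$, $(fg)^3=-fg$ are all nonzero. Yet $(-1)^3\neq 1$, since $\operatorname{char}(K)\neq 2$. The proof of that theorem only shows that $\xi$ is a root of unity of order at most $n$; the closing appeal to Cayley--Hamilton does not upgrade this to $\xi^n=1$. (What is true is that $f$ and $g$ restrict to invertible operators on the nonzero invariant subspace $U=f^ng^n(V)$, whence $\xi^{\dim U}=1$; but $\dim U$ can be anything from $1$ to $n$.) So this gap is inherited from the paper, but it is a genuine gap in your proof.

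The missing ingredient is invertibility, and this is where you must use $R=M_n(K)$ rather than merely $\dim V=n$. The valid part of the theorem, applied to $w_i,w_j$ with your non-nilpotency argument, shows that every $\theta(i,j)$ is a root of unity. Corollary \ref{all.root} then gives $k_i$ with $\theta(i,j)^{k_i}=1$ for all $j$. Hence $w_i^{k_i}$ commutes with every $R_j$, so it lies in $Z(M_n(K))=K$, and it is nonzero because $w_i$ is not nilpotent. Therefore each $w_i$ is invertible; this is exactly Proposition \ref{central.invertibility}.

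Now $w_jw_iw_j^{-1}=\theta(j,i)w_i$. Taking determinants of $n\times n$ matrices gives $\det w_i=\theta(j,i)^n\det w_i$, so $\theta(j,i)^n=1$. This is the argument of Remark \ref{dimension}, run in the natural representation on $K^n$ rather than on $R$. With this replacing your final step, the proof is complete.
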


\begin{Corollary} 
\label{all.root}
Let $R=(\rR,\theta)$ be a finite dimensional regular quantum commutative algebra of quantum length $m$.  Given $1\leq i\leq m$, there exists $k_{i}\in \mathbb{N}$ such that $\theta(i,j)^{k_{i}}=1$, for all $1\leq j\leq m$.
\end{Corollary}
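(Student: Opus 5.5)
The plan is to deduce the statement from the previous Corollary, which says that every $\theta(i,j)$ is a root of unity, together with the finiteness of the index set $\{1,\ldots,m\}$.

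Fix $1\leq i\leq m$. For each $j\in\{1,\ldots,m\}$, the previous Corollary gives that $\theta(i,j)$ is a root of unity. That Corollary comes from Theorem \ref{general.finite.dimensional.case}, applied inside $\operatorname{End}(V)$ with $R\subseteq M_n(K)$, to non-nilpotent homogeneous elements supplied by regularity (Lemma \ref{non.nilp}). So there is some $n_{ij}\in\mathbb{N}$ with $\theta(i,j)^{n_{ij}}=1$. Concretely, Theorem \ref{general.finite.dimensional.case} even yields $n_{ij}=n:=\dim V$ uniformly. This holds once we have $a\in R_i$ and $b\in R_j$ with $a^n$, $b^n$ and $(ab)^n$ all nonzero. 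Regularity provides such a pair: choose $a_1,\ldots,a_n\in R_i$ and $b_1,\ldots,b_n\in R_j$ with the product $a_1b_1\cdots a_nb_n\neq 0$, and pass to suitable single elements via the linearization/genericity argument behind the Corollary.

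Set
\[
k_i:=\operatorname{lcm}\{n_{ij}\mid 1\leq j\leq m\}
\]
(or simply $k_i:=n$ if we use the uniform bound). This is a finite lcm of positive integers, so $k_i\in\mathbb{N}$. Then $\theta(i,j)^{k_i}=\bigl(\theta(i,j)^{n_{ij}}\bigr)^{k_i/n_{ij}}=1$ for every $j$.

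The only delicate point is the one already settled in the previous Corollary: producing, for each pair $(i,j)$, homogeneous elements satisfying the non-nilpotency hypotheses of Theorem \ref{general.finite.dimensional.case}. Taking that Corollary as given, the present statement is immediate.
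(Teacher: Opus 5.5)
Your argument is the paper's: fix $i$, use the preceding Corollary to get that each of $\theta(i,1),\ldots,\theta(i,m)$ is a root of unity, and take a common multiple of their finitely many orders (the paper takes the product $q_1\cdots q_m$, you take the lcm, which makes no difference). One caution: drop the parenthetical shortcut $k_i:=n=\dim V$, because the conclusion $\xi^n=1$ of Theorem \ref{general.finite.dimensional.case} is too strong as stated and your lcm argument does not need it; for instance, $f=\operatorname{diag}(1,-1,0)$ and $g=e_{12}+e_{21}$ in $\operatorname{End}(K^3)$ satisfy its hypotheses with $gf=-fg$, yet $(-1)^3\neq 1$.
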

\begin{proof}
    Given $1\leq i\leq m$, we know that $\theta(i,1)$,\dots, $\theta(i,m)$ are roots of unit. If $\theta(i,j)$ is a $q_{j}$-th root of unit, for $1\leq j\leq m$, we take $k_{i}:= q_{1}\cdots q_{m}$, then $\theta(i,j)^{k_{i}}=1$, for all $1\leq j\leq m$.
\end{proof}

\section{Bahturin-Regev Conjecture}\label{s5}

\begin{Proposition} 
\label{central.invertibility}
Let $R=(\rR,\theta)$ be a finite dimensional regular quantum commutative algebra of quantum length $m$. Suppose $Z(R)= K$ and $w_{i}\in R_{i}$, $1\leq i\leq m$, are such that $w_{1}\cdots w_{m}$ is non-nilpotent. Then, for any $1\leq i\leq m$, $w_{i}$ is invertible and there exists $k_{i}\in \mathbb{N}$ such that $w_{i}^{k_{i}}=\mu_{i}1_{R}$, for some $\mu_{i}\in K^{\ast}$. 
\end{Proposition}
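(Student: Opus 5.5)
By Corollary \ref{all.root}, for each $i$ there is $k_{i}\in\mathbb{N}$ with $\theta(i,j)^{k_{i}}=1$ for all $1\leq j\leq m$. The plan is to show that the power $w_{i}^{k_{i}}$ (or a suitable multiple of $k_{i}$) is central. Then $Z(R)=K$ forces it to be a scalar, and non-nilpotence makes that scalar nonzero, which also gives invertibility of $w_{i}$.

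\emph{Step 1: centrality.} Fix $i$ and put $z=w_{i}^{k_{i}}$. For any homogeneous $b\in R_{j}$, iterating condition (b) gives
\[
w_{i}^{k_{i}}b=\theta(i,j)^{k_{i}}\,b\,w_{i}^{k_{i}}=b\,w_{i}^{k_{i}} .
\]
Since $R=R_{1}\oplus\cdots\oplus R_{m}$, linearity shows that $z$ commutes with all of $R$. Hence $z\in Z(R)=K$, so $w_{i}^{k_{i}}=\mu_{i}1_{R}$ for some $\mu_{i}\in K$.

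\emph{Step 2: $\mu_{i}\neq 0$.} This is where the non-nilpotence of $w=w_{1}\cdots w_{m}$ enters. By quantum commutativity, for every $\ell$ there is $\nu_{\ell}\in K^{\ast}$ with $w^{\ell}=\nu_{\ell}\,w_{1}^{\ell}\cdots w_{m}^{\ell}$; the same reordering is used in the proof of Lemma \ref{non.nilp}. If $\mu_{i}=0$, then $w_{i}^{k_{i}}=0$, so taking $\ell=k_{i}$ gives $w^{k_{i}}=0$. This contradicts the hypothesis that $w$ is not nilpotent, so $\mu_{i}\in K^{\ast}$.

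\emph{Step 3: invertibility.} From $w_{i}^{k_{i}}=\mu_{i}1_{R}$ with $\mu_{i}\neq 0$, the element $\mu_{i}^{-1}w_{i}^{k_{i}-1}$ is a two-sided inverse of $w_{i}$. (If $k_{i}=1$, then $w_{i}$ is already a nonzero scalar.)

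The argument is short. The only point needing care is the reordering identity $w^{\ell}=\nu_{\ell}w_{1}^{\ell}\cdots w_{m}^{\ell}$ with $\nu_{\ell}\neq 0$. It follows because each transposition of adjacent homogeneous factors contributes a factor $\theta(\cdot,\cdot)\in K^{\ast}$. I expect no real obstacle beyond verifying this, together with the fact that Corollary \ref{all.root} applies because $R$ is finite dimensional.
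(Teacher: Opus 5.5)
Your proof is correct and follows the paper's own argument: Corollary \ref{all.root} makes $w_i^{k_i}$ central, so $Z(R)=K$ forces it to be a scalar. You also make explicit two steps the paper leaves implicit, namely why that scalar is nonzero (via the reordering $w^{\ell}=\nu_\ell\, w_1^{\ell}\cdots w_m^{\ell}$ with $\nu_\ell\in K^{\ast}$) and why $w_i$ is then invertible.
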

\begin{proof} Take $1\leq \ell\leq m$,  by Corollary \ref{all.root}, there exists $k_{\ell}\in \mathbb{N}$ such that $\theta(\ell,j)^{k_{\ell}}=1$, for all $1\leq j\leq m$. Then, given $1\leq j\leq m$ and $u_{j}\in R_{j}$  we have
\[
w_{\ell}^{k_{\ell}}u_{j}=u_{j}w_{\ell}^{k_{\ell}}.
\]
Thus, if $u=u_{1}+\cdots +u_{m}\in R$, with $u_{i}\in R_{i}$, we get
\[
w_{\ell}^{k_{\ell}}u=\sum_{i=1}^{m}w_{\ell}^{k_{\ell}}u_{i}=\sum_{i=1}^{m}u_{i}w_{\ell}^{k_{\ell}}=uw_{\ell}^{k_{\ell}}.
\]
 Since $w_{1}\cdots w_{m}$ is non-nilpotent, we conclude $0\neq w_{\ell}^{k_{\ell}}\in Z(R)= K$, i.e., $w_{\ell}^{k_{\ell}}=\mu_{\ell}1_{R}$, $\mu_{\ell}\in K^{\ast}$. 
\end{proof}

 \begin{Remark}
	\label{dimension}
	Let $R=(\rR,\theta)$ be a finite dimensional regular quantum commutative algebra of quantum length $m$. Suppose $Z(R)= K$ and $w_{i}\in R_{i}$, $1\leq i\leq m$, are such that $w_{1}\cdots w_{m}$ is non-nilpotent. In view of Proposition \ref{central.invertibility}, we have an alternative way to prove that all $\theta(i,j)$ must be $\dim(R)$-th roots of unity. Indeed, given $1\leq i,j\leq m$ we get
	\begin{align*}
	  \det(w_{i})\det(w_{j}) &= \det(\theta(i,j)w_{j}w_{i})\\
		&= \Big(\theta(i,j)^{\dim(R)}\Big)\det(w_{j})\det(w_{i})
	\end{align*}
	since $w_{i}$ and $w_{j}$ are invertible, we conclude $\theta(i,j)^{\dim(R)}=1$.
\end{Remark}

\begin{Corollary} Let $R=(\rR,\theta)$ be a finite dimensional regular quantum commutative algebra of quantum length $m$. Suppose $Z(R)= K$ and $w_{i}\in R_{i}$, $1\leq i\leq m$, are such that $w_{1}\cdots w_{m}$ is non-nilpotent. If either $\operatorname{char}(K)=0$ or $\operatorname{char}(K)\nmid \dim(R)$, then $w_{1},\dots, w_{m}$ are diagonalizable.
\end{Corollary}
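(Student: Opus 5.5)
By Proposition \ref{central.invertibility}, each $w_{i}$ is invertible and there is $k_{i}\in\mathbb{N}$ with $w_{i}^{k_{i}}=\mu_{i}1_{R}$ for some $\mu_{i}\in K^{\ast}$. So the minimal polynomial of $w_{i}$ divides $x^{k_{i}}-\mu_{i}$. Over the algebraically closed field $K$, a matrix is diagonalizable precisely when its minimal polynomial has distinct roots. It therefore suffices to choose the exponent $k_{i}$ so that $x^{k_{i}}-\mu_{i}$ is separable, i.e. so that $\operatorname{char}(K)\nmid k_{i}$. If $\operatorname{char}(K)=0$ this is automatic, since $x^{k}-\mu$ with $\mu\neq 0$ always has $k$ distinct roots, and we are done.

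When $\operatorname{char}(K)=p>0$ with $p\nmid\dim(R)$, I will not use the exponent $k_{i}$ coming from Corollary \ref{all.root}, which is a product of root-of-unity orders and might be divisible by $p$. Instead I will use Remark \ref{dimension}: under the present hypotheses every $\theta(i,j)$ satisfies $\theta(i,j)^{\dim(R)}=1$. Set $N=\dim(R)$.

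Then for every $u_{j}\in R_{j}$ we get $w_{i}^{N}u_{j}=\theta(i,j)^{N}u_{j}w_{i}^{N}=u_{j}w_{i}^{N}$. Hence $w_{i}^{N}$ commutes with all of $R=\bigoplus_{j}R_{j}$, so $w_{i}^{N}\in Z(R)=K$. Since $w_{i}$ is invertible, $w_{i}^{N}=\nu_{i}1_{R}$ with $\nu_{i}\neq 0$.

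The polynomial $x^{N}-\nu_{i}$ has derivative $Nx^{N-1}$. This derivative is nonzero because $p\nmid N$, and its only root $0$ is not a root of $x^{N}-\nu_{i}$. So $x^{N}-\nu_{i}$ is separable, and the minimal polynomial of $w_{i}$ (viewing $R\subseteq M_{n}(K)$ via its regular representation) divides it. Therefore $w_{i}$ is diagonalizable.

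The main subtlety is the choice of exponent: the $k_{i}$ from Corollary \ref{all.root} could be divisible by $p$, which is why I replace it by $\dim(R)$ via Remark \ref{dimension}. A secondary point is that diagonalizability is with respect to a matrix realization of $R$. Both the left regular representation and any faithful representation work, because the minimal polynomial of $w_{i}$ as an element of the algebra $R$ coincides with its minimal polynomial as a matrix under any faithful unital representation.
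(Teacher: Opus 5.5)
Your proof is correct and is essentially the paper's argument: the paper uses Remark \ref{dimension} to obtain $w_i^{\dim(R)}=\lambda_i 1_R$ with $\lambda_i\in K^{\ast}$, then notes that $x^{\dim(R)}-\lambda_i$ has no repeated roots when $\operatorname{char}(K)\nmid\dim(R)$, which is exactly your positive-characteristic case. Your separate treatment of characteristic $0$ via the exponent $k_i$ of Proposition \ref{central.invertibility} is a harmless variant, since the paper simply uses the exponent $\dim(R)$ in both cases.
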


\begin{proof}  
	By Remark \ref{dimension}, it follows that for any $1\leq i\leq m$ there exists $\lambda_{i}\in K^{\ast}$ such that $w_{i}^{\dim(R)}=\lambda_{i} 1_{R}$. Then, given $1\leq i\leq m$, if $m_{w_{i}}(x)$ is the minimal polynomial of $w_{i}$, we conclude
	\[
	m_{w_{i}}(x)\mid (x^{\dim (R)}-\lambda_{i}).
	\]	
	Let $h(x):=x^{\dim (R)}-\lambda_{i}$ and $h'(x)=\dim(R)x^{\dim(R)-1}$ be its formal derivative. Since either $\operatorname{char}(K)=0$ or $\operatorname{char}(K)\nmid \dim(R)$, it follows that $h'(x)=0$ if and only if $x=0$. Therefore, $h(x)$, and consequently $m_{w_{i}}(x)$, has no multiple roots, which implies that $w_{i}$ is diagonalizable.
\end{proof}

Let $R=(\rR,\theta)$ be a regular quantum commutative algebra. We say that the regular quantum commutative decomposition of $R$ is \textit{ minimal} if no two columns (equivalently, two rows) with different numbers in the quantum decomposition matrix $M^{R}$ are equal. If the regular quantum decomposition of $R$ is not minimal, then $\det M^{R}=0$. So if $\det M^{R}\neq 0$, then the regular quantum decomposition of $R$  is minimal.  In the case where the regular quantum decomposition of $R$ is minimal, we simply say that $\mathscr{R}$ is minimal.

\begin{Conjecture}
\label{Conjecture.Bahturin.Regev}
(Bahturin-Regev,\cite{bahturin2009graded}) Let $R=(\rR,\theta)$ be a regular quantum commutative algebra of quantum length $m$. 
\begin{enumerate}
    \item $\rR$ is minimal if and only if $\det M^{R}\neq 0$.
    \item If $\rR$ is minimal, the quantum length $m$, as well as the determinant of the corresponding matrix, are invariants, namely are independent of the particular minimal decomposition.
\end{enumerate}
    
\end{Conjecture}

In \cite{Eli1}, Aljadeff and David gave a positive answer to Conjecture \ref{Conjecture.Bahturin.Regev} in the case of regular gradings when $\operatorname{char}(K)=0$. In \cite{LPK.1}, it was shown that the conjecture fails for regular gradings over any fields whose characteristic is strictly greater than $2$. More precisely, the main result of \cite{LPK.1} states that if $F$ is a field with $\operatorname{char}(F)>2$, then one can construct a finite abelian group $H$ and an $H$-graded regular algebra $R$ over $F$ such that $\det M^{R}=0$, while the regular decomposition of $R$ is minimal. 

However, one may impose certain conditions on the characteristic of the field so that the first part of the conjecture holds for a given abelian group and a regular grading by this group. For the sake of completeness, we present this result below, although the idea is essentially the same as in \cite{Eli1} and \cite{LPK.1}.

\begin{Proposition}
\label{Bahturin.Regev.Grading}
Let $G$ be a finite abelian group, $K$ a field with $\operatorname{char}(K)\nmid |G|$, $\beta\colon G\times G\rightarrow K^{\ast}$ a skew-symmetric bicharacter on $G$. If $R$ is a $G$-graded regular algebra with the commutation  factor $\beta$, then the regular decomposition of $R$ is minimal if and only if $\det M^{R}\neq 0$. 
\end{Proposition}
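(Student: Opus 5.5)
The plan is to read $M^{R}=(\beta(g,h))_{g,h\in G}$ as a character table, then use orthogonality of characters. For each $g\in G$ the map $\chi_g:=\beta(g,\cdot)\colon G\to K^{\ast}$ is a group homomorphism, since $\beta$ is a bicharacter. So the rows of $M^{R}$ are the value vectors of the characters $\chi_g\in\widehat{G}=\operatorname{Hom}(G,K^{\ast})$. The map $g\mapsto\chi_g$ is itself a homomorphism $G\to\widehat{G}$, and its kernel is the radical $\{g\in G\mid \beta(g,h)=1 \text{ for all } h\}$.

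For the easy direction, I would take $\det M^{R}\neq 0$. Then no two rows coincide, which gives minimality; this was already observed before the conjecture.

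For the converse, I would assume the decomposition is minimal, i.e. $g\mapsto\chi_g$ is injective. The goal is to show that distinct characters of $G$ are linearly independent as functions $G\to K$. Then the $|G|$ rows of $M^{R}$ are linearly independent, and $\det M^{R}\neq 0$. Dedekind's (Artin's) lemma on independence of distinct characters already gives this over any field. I would nevertheless also record the orthogonality computation, because it gives more. Let $\chi\neq\chi'$, choose $k$ with $\chi(k)\neq\chi'(k)$, and put $\psi=\chi\chi'^{-1}$. Then $\psi(k)\sum_h\psi(h)=\sum_h\psi(kh)=\sum_h\psi(h)$, hence $\sum_{h\in G}\chi(h)\chi'(h)^{-1}=0$. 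For $\chi=\chi'$ the same sum is $|G|$. Now let $N$ be the matrix with entries $N_{h,g}=\beta(g,h)^{-1}$. Then $M^{R}N=|G|\cdot I$, and this identity holds precisely because the $\chi_g$ are pairwise distinct. Since $\operatorname{char}(K)\nmid|G|$, the scalar $|G|$ is nonzero in $K$, so $\det M^{R}\det N=|G|^{|G|}\neq 0$.

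I expect the only delicate point to be the role of the characteristic hypothesis, that is, showing that $|G|$ is invertible where it is needed. Dedekind's lemma alone gives injectivity, hence $\det M^{R}\neq0$. Yet the paper cites a counterexample in positive characteristic, so I would double-check how minimality is defined there. The likely explanation is that in \cite{LPK.1} minimality refers to coarsenings or to $\beta$ taking values where $p$-torsion collapses. If so, the argument must also use that all values $\beta(g,h)$ are roots of unity of order dividing $\exp(G)$. The hypothesis $\operatorname{char}(K)\nmid|G|$ then guarantees that $\mu_{\exp(G)}(K)$ has full order. This makes the comparison between $G$ and $\widehat{G}$ meaningful, with $|\widehat G|=|G|$.

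I would present the proof through the explicit inverse $|G|^{-1}N$. This also yields $(M^{R})^{2}$ as $|G|$ times a permutation matrix (the one for $g\mapsto g^{-1}$, using skew-symmetry), and therefore $\det M^{R}=\pm|G|^{|G|/2}$. That is the form needed later in Theorem~\ref{Bahturin.Regev}.
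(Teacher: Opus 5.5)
Your proof is correct. The version you say you would present, namely the orthogonality relation written as $M^{R}N=|G|I$ and inverted using $\operatorname{char}(K)\nmid|G|$, is in fact the paper's proof. By skew-symmetry $N_{h,g}=\beta(g,h)^{-1}=\beta(h,g)$, so $N=M^{R}$. Your identity is therefore exactly the paper's $(M^{R})^{2}=|G|\,I_{|G|}$, which the paper derives from $\sum_{g}\beta(a,g)=0$ for $a\neq e$.

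The same observation corrects your closing aside. $(M^{R})^{2}$ is $|G|$ times the identity, not $|G|$ times the permutation matrix of $g\mapsto g^{-1}$; that permutation matrix appears in $M^{R}(M^{R})^{T}$ instead. The conclusion $\det M^{R}=\pm|G|^{|G|/2}$ then follows directly from $(\det M^{R})^{2}=|G|^{|G|}$.

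What is genuinely different in your proposal is the Dedekind--Artin route. Linear independence of the $|G|$ distinct characters $\chi_g=\beta(g,\cdot)$ gives $\det M^{R}\neq 0$ over any field, without using the characteristic at all. The paper's computation, by contrast, needs $|G|\neq 0$ in $K$, but in return it gives the exact value of the determinant up to sign.

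Your suspicion about the characteristic hypothesis is justified, and more simply than you suggest. Suppose $p=\operatorname{char}(K)$ divided $|G|$. For $g$ of order $p$ one has $\beta(g,h)^{p}=\beta(g^{p},h)=1$, hence $\beta(g,h)=1$ for all $h$. So a minimal decomposition automatically forces $\operatorname{char}(K)\nmid|G|$. Consequently your speculative paragraph about $\mu_{\exp(G)}(K)$ and $|\widehat{G}|=|G|$ is not needed for this proposition.
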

\begin{proof} It is enough to show that if the regular decomposition of $R$ is minimal, then $\det M^{R}\neq 0$. Assuming this, for any $a\in G$, $a\neq e$, there exists $b\in G$ such that $\beta(a,b)\neq 1$. It follows that
\[
    \sum_{g\in G}\beta(a,g)=\sum_{g\in G}\beta(a,b)\beta(a,gb^{-1})=\beta(a,b)\sum_{g\in G}\beta(a,gb^{-1})
\]
since $\beta(a,b)\neq 1$, it follows that 
\begin{equation}
    \label{aux.k.1}
\sum_{g\in G}\beta(a,g)=0.    
\end{equation}
 If $n_{(g,h)}$ is the $(g,h)$-entry of $(M^{R})^{2}$, then
\begin{align*}
    n_{(g,h)} &=\sum_{c\in G}\beta(g,c)\beta(c,h)=\sum_{c\in G}\beta(g,c)\beta(g^{-1},c)\beta(g^{-1},c)^{-1}\beta(c,h)\\
    &= \sum_{c\in G}\beta(e,c)\beta(c,g^{-1})\beta(c,h)=\sum_{c\in G}\beta(c,g^{-1}h)\\
\end{align*}
by (\ref{aux.k.1}) we get
\[
n_{(g,h)}=\begin{cases}
    |G|,\quad \text{if $g=h$}\\
    0,\quad \text{if $g\neq h$}.
\end{cases}
\]
 Thus, $(M^{R})^{2}=|G|I_{|G|}$, where $I_{|G|}$ is the $|G|\times |G|$-identity matrix. Because $\operatorname{char}(K)\nmid |G|$ we conclude $\det M^{R}\neq 0$. 
\end{proof}

\begin{Lemma}
\label{result.m.dim(R)}
Let $R=(\rR,\theta)$ be a finite dimensional regular quantum commutative algebra of quantum length $m$ (see Definition \ref{dQCD}) such that $\mathscr{R}$ is minimal, $Z(R)= K$ and $m=\dim (R)$. Set $S=\{1\ldots,m\}$. Then, there exists a map $\star\colon S\times S\rightarrow S$ making  $S$ a finite abelian group and $\mathscr{R}$ a $G$-grading. Moreover,  given any $1\leq j,j,k\leq m$, it is true that 
\[
\theta(i,j)\theta(i,k)=\theta(i,j\star k).
\]
\end{Lemma}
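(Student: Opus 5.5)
Since $m=\dim R$ and every $R_i$ is nonzero, each component is one-dimensional, $R_i=Kw_i$. By Lemma \ref{non.nilp}, the product $w_1\cdots w_m$ is non-nilpotent. Because $Z(R)=K$, Proposition \ref{central.invertibility} then shows that every $w_i$ is invertible. The strategy is to show that the product of two basis vectors is a nonzero multiple of a third, and that this defines a group law on $S$.

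First I show $w_iw_j\neq 0$: both factors are invertible, so their product is invertible. Next I show that $w_iw_j$ is homogeneous. Write $w_iw_j=\sum_k c_kw_k$ and multiply on the left by an arbitrary $w_\ell$. Quantum commutativity gives $w_\ell w_iw_j=\theta(\ell,i)\theta(\ell,j)\,w_iw_jw_\ell$. On the other hand, $w_\ell w_k=\theta(\ell,k)w_kw_\ell$. Comparing the two expressions and cancelling the invertible $w_\ell$ on the right yields
\[
\sum_k c_k\bigl(\theta(\ell,k)-\theta(\ell,i)\theta(\ell,j)\bigr)w_k=0 .
\]
Hence every $k$ with $c_k\neq0$ satisfies $\theta(\ell,k)=\theta(\ell,i)\theta(\ell,j)$ for all $\ell$. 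So all such $k$ have the same row of $M^R$ up to transposition (via Lemma \ref{regular quantum commutative.relations} the columns determine the rows). Minimality of $\rR$ then forces that $k$ to be unique. Define $i\star j:=k$; we then have $w_iw_j=c\,w_{i\star j}$ with $c\neq 0$, and
\[
\theta(\ell,i\star j)=\theta(\ell,i)\theta(\ell,j).
\]
Swapping the roles of the arguments via $\theta(i,j)=\theta(j,i)^{-1}$ gives the identity stated in the lemma, $\theta(i,j)\theta(i,k)=\theta(i,j\star k)$.

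Next I check the group axioms.
\begin{enumerate}
\item Associativity follows from associativity in $R$: both $(w_iw_j)w_k$ and $w_i(w_jw_k)$ are nonzero multiples of $w_{(i\star j)\star k}$ and $w_{i\star(j\star k)}$ respectively, and the components form a direct sum.
\item Commutativity holds because $w_iw_j=\theta(i,j)w_jw_i$.
\item The identity element: $1_R$ lies in some component. Applying the same argument to $1_R$ shows that its expansion must be supported on indices $k$ whose row is the all-ones row, and minimality makes that index unique. Call it $e$; then $R_e=K1_R$, and $e$ is the neutral element.
\item Inverses: since $w_i^{-1}$ commutes with $w_\ell$ up to the scalar $\theta(\ell,i)^{-1}$, the same homogeneity argument shows $w_i^{-1}$ is homogeneous, say $w_i^{-1}\in R_{i'}$. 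Then $i\star i'=e$.
\end{enumerate}
Thus $(S,\star)$ is a finite abelian group and $R_iR_j\subseteq R_{i\star j}$, so $\mathscr{R}$ is a group grading.

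The main obstacle I expect is the homogeneity step. It requires both that the $w_\ell$ are invertible (this is where $Z(R)=K$ enters, through Proposition \ref{central.invertibility}) and that minimality identifies a component uniquely by its row of $\theta$-values.
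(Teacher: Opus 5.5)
Your proof is correct and follows essentially the same route as the paper. Both arguments use one-dimensional components, invertibility of the $w_i$ via Proposition \ref{central.invertibility}, and a comparison of $w_\ell(w_iw_j)$ with $(w_iw_j)w_\ell$ which, combined with minimality, forces $w_iw_j$ into a single component; associativity and commutativity of $\star$ are then inherited from $R$.

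Your deviations are minor. You explicitly justify $w_iw_j\neq 0$, which the paper leaves implicit. You obtain the neutral element and inverses by the same homogeneity argument, whereas the paper uses $w_i^{k_i}=\mu_i 1_R$. Finally, no swapping via skew-symmetry is needed: $\theta(\ell,i\star j)=\theta(\ell,i)\theta(\ell,j)$ is already the stated identity after renaming indices.
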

\begin{proof}

By Lemma \ref{non.nilp}, there exists $w_{1}\in R_{1}$,\dots, $w_{m}\in R_{m}$ such that $w_{1}\cdots w_{m}$ is not nilpotent.  Since $m=\dim R$,  $w_{1},\dots, w_{m}$ is a basis of $R$ and $R_{i}=\operatorname{Span}_{K}\{w_{i}\}$, for all $1\leq i\leq m$. 

For each pair $1\leq j,k\leq m$ we choose a minimal $t$, $1\leq t\leq m$,  such that 
\[
w_{j}w_{k}=\gamma_{1}w_{s_{1}}+\cdots +\gamma_{t}w_{s_{t}},\mbox{ where }\gamma_{1},\dots, \gamma_{t}\in K^{\ast},\mbox{ and } s_{\ell}\neq s_{\ell'}\mbox{ for }\ell\neq \ell'.
\]
If $t=1$, then $R_{j}R_{k}\subseteq R_{s_{1}}$ and we set $j\star k:= s_{1}$. Otherwise, given $1\leq i\leq m$, we write
\begin{align*}
w_{i}(w_{j}w_{k}) &=w_{i}(\gamma_{1}w_{s_{1}})+\cdots +w_{i}(\gamma_{t}w_{s_{t}})\\ & = \theta(i,s_{1})(\gamma_{1}w_{s_{1}})w_{i}+\cdots +\theta(i,s_{t})(\gamma_{t}w_{s_{t}})w_{i}.
\end{align*}
On the other hand 
\begin{align*}
    w_{i}(w_{j}w_{k}) &= \theta(i,j)\theta(i,k) (w_{j}w_{k})w_{i}\\
 &= \theta(i,j)\theta(i,k)(\gamma_{1}w_{s_{1}})w_{i}+\cdots +\theta(i,j)\theta(i,k)(\gamma_{t}w_{s_{t}})w_{i}.
 \end{align*}  
By  Proposition \ref{central.invertibility}, $w_{i}$ is invertible, thus we obtain 
\begin{equation}
\label{minimal.equality}
    \theta(i,s_{1})=\cdots =\theta(i,s_{t})=\theta(i,j)\theta(i,k).
\end{equation}
In particular the rows $(\theta(i,s_{1}))_{1\leq i\leq m}$ and $(\theta(i,s_{2}))_{1\leq i\leq m}$ of $M^{R}$ are equal, which is a contradiction with the minimality of $\mathscr{R}$. Since $1\leq j,k\leq m$ are arbitrary, we conclude that there exists a binary operation $\star\colon S\times S\rightarrow S$ such that
\[
R_{j}R_{k}\subseteq R_{j\star k},\mbox{ for all } 1\leq j,k\leq m. 
\]
Consequently, $\mathscr{R}$ is a set grading.   Moreover, by (\ref{minimal.equality}) we have
\[
\theta(i,j)\theta(i,k)=\theta(i,j\star k),\quad \text{for all }\quad 1\leq i\leq m.
\]

So we have shown that $G=(S,\star)$ is a groupoi. It follows from the quantum commutativity that $G$ is commutative. Now, since $R$ is associative, given $1\leq i,j,k\leq m$, we have
\[
w_{i}(w_{j}w_{k})=(w_{i}w_{j})w_{k},
\]
which yields $i\star (j\star k)=(i\star j)\star k$. Thus, $\star$ is associative and $G$ is a semigroup. Finally, by Proposition \ref{central.invertibility}, for any $1\leq i\leq m$ there exists $k_{i}\in \mathbb{N}$ such that $w_{i}^{k_{i}}=\mu_{i}1_{R}$, which guarantees the existence of an inverse for $i$ with respect to $\star$ and shows that $1_{R}$ lies in a homogeneous component, say $s_{0}\in S$, and $R_{s_{0}}=\operatorname{Span}_{K}\{1_{R}\}$. It follows that $s_{0}$ is the neutral element of $G$, $G$ is a finite abelian group, and $\mathscr{R}$ is a $G$-grading.
\end{proof}

\begin{Corollary}
\label{twisted.group}
Let $R=(\rR,\theta)$ be a finite dimensional regular quantum commutative algebra of quantum length $m$ such that $\mathscr{R}$ is minimal, $Z(R)= K$ and $m=\dim (R)$. Then, there exists  a finite abelian group $G$ such that $\mathscr{R}$ is a $G$-regular grading, $M_{G}^{R}=M^{R}$ and $\theta(i,j)$ is a skew-symmetric bicharacter of $G$.   
\end{Corollary}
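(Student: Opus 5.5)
The plan is to read the corollary off Lemma \ref{result.m.dim(R)} and then check the two remaining assertions: that $\theta$ is a skew-symmetric bicharacter of $G$, and that the two decomposition matrices agree.

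First, Lemma \ref{result.m.dim(R)} applies directly, since minimality, $Z(R)=K$ and $m=\dim R$ are exactly its hypotheses. It gives a binary operation $\star$ on $S=\{1,\ldots,m\}$ such that $G=(S,\star)$ is a finite abelian group and $\rR$ is a $G$-grading with $R_j R_k\subseteq R_{j\star k}$.

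Next I verify that $\rR$ is a regular $G$-grading in the sense of Remark \ref{recovering}. Condition (1) of Definition \ref{Definition.general.reg} is exactly condition (a) of Definition \ref{dQCD}, with the index set identified with $G$. Since $G$ is abelian, condition (2) may be replaced by (2'), and (2') is condition (b) with the same function $\theta$. Hence the commutation function of the $G$-grading is $\theta$ itself, indexed by the elements of $G$. It follows that $M^{R}_{G}=(\theta(g,h))_{g,h\in G}$ coincides with $M^{R}=(\theta(i,j))$ under the identification $S=G$.

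Finally I check the bicharacter property. Lemma \ref{result.m.dim(R)} gives multiplicativity in the second argument,
\[
\theta(i,j\star k)=\theta(i,j)\theta(i,k).
\]
Lemma \ref{regular quantum commutative.relations} gives $\theta(i,j)=\theta(j,i)^{-1}$. Combining the two yields multiplicativity in the first argument:
\[
\theta(i\star j,k)=\theta(k,i\star j)^{-1}=\theta(k,i)^{-1}\theta(k,j)^{-1}=\theta(i,k)\theta(j,k).
\]
So $\theta$ is a skew-symmetric bicharacter. Alternatively, this follows from Example \ref{regular.grading.Regev.Seeman} once the grading is known to be regular.

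The only real content is the group structure, which Lemma \ref{result.m.dim(R)} already provides, so nothing substantive remains. The point needing a little care is consistency of indexing: the rows and columns of $M^{R}_{G}$ must be ordered by the same bijection $S\to G$ used for $M^{R}$, and then the two matrices are literally equal.
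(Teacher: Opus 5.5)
Your proposal is correct and follows the paper's proof: you take the group structure and multiplicativity in the second argument from Lemma~\ref{result.m.dim(R)}, and you get multiplicativity in the first argument by combining it with the skew-symmetry from Lemma~\ref{regular quantum commutative.relations}. Your explicit check of the regularity conditions (1) and (2') and of the indexing behind $M^{R}_{G}=M^{R}$ spells out what the paper asserts in its final sentence.
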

\begin{proof}

By Lemma \ref{result.m.dim(R)}, there exists a binary operation $\star\colon S\times  S\rightarrow  S$  on $S=\{1,\ldots, m\}$ such that $G:=(S,\star)$ is finite abelian group and $\mathscr{R}$ is a $G$-grading. Define 
\[
\beta\colon G\times G\rightarrow K^{\ast}\quad (i,j)\mapsto \theta(i,j).
\]
By the same lemma, we know that for any $1\leq i,j,k\leq m$
\begin{equation}
\label{one.linearity}
    \theta(i,j)\theta(i,k)=\theta(i,j\star k).
\end{equation}
On the other hand, by the quantum commutation relations in Lemma \ref{regular quantum commutative.relations}, we have the skew-symmetry of $\beta$, and by inverting both sides of (\ref{one.linearity}) we obtain
\[
\theta(j,i)\theta(k,i)=\theta(j\star k,i).
\]
Therefore $\beta$ is a bicharacter of $G$, $\mathscr{R}$ is a $G$-regular grading with bicharacter $\beta$ and $M_{G}^{R}=M^{R}$.
\end{proof}

\subsection{Simple regular quantum commutative algebras}\label{ssSQCA}

\begin{Lemma}
\label{weaky.hipothesis}
Let $R=M_{n}(K)$ and let $R=R_{1}+ \cdots + R_{m}$ be a decomposition of $R$ as a (not necessarily direct) sum of vector subspaces satisfying the following conditions:
\begin{enumerate}
\item $R_{i}\neq \{0\}$ for all $1\leq i\leq m$;
\item for all $1\leq i,j\leq m$, there exists a root of unity $\theta(i,j)\in K^{\ast}$ such that
\[
a_{i}a_{j}=\theta(i,j)a_{j}a_{i}
\]
for all $a_{i}\in R_{i}$ and $a_{j}\in R_{j}$.
\end{enumerate}
Under these conditions, there exist $v_{1}\in R_{1}$,\dots, $v_{m}\in R_{m}$ such that $R_{i}=\operatorname{Span}_{K}\{v_{i}\}$,  and there exists $\lambda_{i}\in K^{\ast}$ such that $v_{i}^{n}=\lambda_{i}I_{n}$. Moreover if $R=R_{1}+\cdots+R_{m}$ is a direct sum, then it is a regular quantum commutative decomposition of $R$ and $m=n^{2}$. 
\end{Lemma}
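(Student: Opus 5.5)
The plan is to show that every nonzero homogeneous element is invertible. Everything else then follows from a determinant argument and simplicity of $M_{n}(K)$; the root-of-unity hypothesis on $\theta$ turns out not to be needed for the main steps.

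\textbf{Step 1: every nonzero homogeneous element is invertible.} Fix $i$ and $0\neq a\in R_{i}$. For any $j$ and $r\in R_{j}$ we have $ar=\theta(i,j)ra$ with $\theta(i,j)\neq 0$. Since $R=R_{1}+\cdots+R_{m}$, this gives $aR\subseteq Ra$ and $Ra\subseteq aR$, so $aR=Ra$. Then $RaR=aRR=aR$, so $aR$ is a two-sided ideal of $R=M_{n}(K)$. It contains $a\cdot 1=a\neq 0$, so by simplicity $aR=R$. Hence $a$ has a right inverse, and so $a$ is invertible in $M_{n}(K)$.

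\textbf{Step 2: each $R_{i}$ is one-dimensional.} Suppose $a,b\in R_{i}$ are linearly independent. The polynomial $p(t)=\det(a-tb)$ has leading coefficient $\pm\det b\neq 0$ by Step 1, so it has degree $n\geq 1$. Since $K$ is algebraically closed, $p$ has a root $t_{0}$. Then $a-t_{0}b$ is a nonzero element of $R_{i}$ that is singular, contradicting Step 1. Therefore $\dim R_{i}\le 1$, and since $R_{i}\neq\{0\}$ we get $R_{i}=\operatorname{Span}_{K}\{v_{i}\}$ with $v_{i}$ invertible.

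\textbf{Step 3: $v_{i}^{n}$ is scalar.} From $v_{i}v_{j}=\theta(i,j)v_{j}v_{i}$ and the invertibility of $v_{j}$, taking determinants gives $\det v_{i}\det v_{j}=\theta(i,j)^{n}\det v_{j}\det v_{i}$. Hence $\theta(i,j)^{n}=1$ for all $i,j$, in the same spirit as Remark~\ref{dimension}. It follows that $v_{i}^{n}v_{j}=\theta(i,j)^{n}v_{j}v_{i}^{n}=v_{j}v_{i}^{n}$ for every $j$. Since the $v_{j}$ span $R$, the element $v_{i}^{n}$ lies in $Z(M_{n}(K))=K I_{n}$. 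It is invertible, so $v_{i}^{n}=\lambda_{i}I_{n}$ with $\lambda_{i}\in K^{\ast}$.

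\textbf{Step 4: the direct-sum case.} If the sum is direct, then $m=\sum_{i}\dim R_{i}=\dim M_{n}(K)=n^{2}$. Condition (b) of Definition~\ref{dQCD} is the hypothesis. For condition (a), any product $v_{i_{1}}\cdots v_{i_{k}}$ is a product of invertible matrices and hence nonzero. So the decomposition is regular quantum commutative.

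I expect the only delicate point to be Step 1, namely checking that $aR$ really is a two-sided ideal using only the non-direct sum decomposition. Once invertibility is in hand, Steps 2–4 are routine.
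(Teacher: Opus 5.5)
Your proof is correct, and it takes a genuinely different route from the paper's.

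The paper's order of steps is as follows.
\begin{enumerate}
\item It argues, via the ideal generated by $R_i$, that some product $v_iv$ with $v\in R_j$ is nonzero.
\item It invokes Theorem~\ref{general.finite.dimensional.case} to get $\theta(i,j)^n=1$.
\item It deduces that $v_i^n$ commutes with every homogeneous element, so $v_i^n=\lambda_iI_n$, and only from this obtains invertibility of $v_i$.
\item It proves $\dim R_t=1$ by noting that $v_t^{-1}v$ commutes with every $R_j$, hence lies in $Z(M_n(K))=KI_n$.
\end{enumerate}

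You invert this logic. A nonzero homogeneous $a$ is a normal element ($aR=Ra$), so $aR$ is a two-sided ideal. Simplicity then gives invertibility of \emph{every} nonzero homogeneous element, using only $\theta(i,j)\neq 0$. After that, $\theta(i,j)^n=1$ drops out of determinants, as in Remark~\ref{dimension}.

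Your route is both shorter and more rigorous. The paper's assertion that $\lambda_i\in K^{\ast}$ needs $v_i$ to be non-nilpotent. Its appeal to Theorem~\ref{general.finite.dimensional.case} requires $f^n,g^n,(fg)^n\neq 0$. The paper does not verify either condition at that point, and neither follows merely from $v_iv\neq 0$. Your Step~1 supplies exactly this missing invertibility. It also shows that the root-of-unity hypothesis in the statement is superfluous.

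On the other hand, the paper's centralizer argument for one-dimensionality does not need algebraic closure, whereas your characteristic-polynomial argument does. Since $K$ is algebraically closed here, this costs nothing. Once Step~1 is in place, you could also replace Step~2 by the observation that $b^{-1}a$ is central for $a,b\in R_i$.
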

\begin{proof}   First, suppose there exists $1\leq i\leq m$ such that $R_{i}R_{j}= \{0\}$, and consequently $R_{j}R_{i}=\{0\}$, for all $j\neq i$. In this case, we consider an ideal $\mathcal{I}$ generated by $R_{i}$. Since $R$ is simple and $R_{i}\neq 0$, it follows that $\mathcal{I}=R$. Since $R_{i}R_{j}=0$, for all $j\neq i$, the following is true. For any $u=u_{1}+\cdots +u_{m}$, and $\widetilde{u}=\widetilde{u}_{1}+\cdots +\widetilde{u}_{m}$, with $u_{k}$, $\widetilde{u}_{k}\in R_{k}$, given any $a_{i}\in R_{i}$ we have
\[
ua_{i}\widetilde{u}=\sum_{k,l=1}^nuu_{k}a_{i}\widetilde{u}_{l}=u_{i}a_{i}\widetilde{u}_{i}.
\]
Therefore,\emph{} if $I_{n}$ is the $n\times n$ identity matrix of $R$, we conclude $I_{n}$ is a linear combination of products of elements from $R_{i}$. But then $I_{n}a_{j}=0$, for all $a_{j}\in R_{j}$, $j\neq i$, i.e $R_{j}=\{0\}$ for all $j\neq i$, which is a contradiction. Thus, there exists $j\neq i$ such that $R_{i}R_{j}\neq \{0\}$. 

Therefore, given any $1\leq i\leq m$, there exists $v_{i}\in R_{i}$, $j\neq i$, and $v\in A_{j}$ such that $v_{i}v\neq 0$.  By Theorem \ref{general.finite.dimensional.case}, all $\theta(i,j)$ are $n$-th roots of 1. In this case,
\[
v_{i}^{n}a_{k}=\theta(i,k)^{n}(a_{k}v_{i}^{n})=a_{k}v_{i}^{n},\quad \text{for all}\quad 1\leq k\leq m\quad  \text{and}\quad a_{k}\in A_{k}
\]
and it implies that $v_{i}^{n}=\lambda_{i} I_{n}$ for some $\lambda_{i}\in K^{\ast}$. 

Thus, there exists invertible elements $v_{1}\in R_{1}$,\dots, $v_{m}\in R_{m}$ such that for any $1\leq i\leq m$, there exists $\lambda_{i}\in K^{\ast}$ such that $v_{i}^{n}=\lambda_{i}I_{n}$. It is worth observing that, since $v_{1}\cdots v_{m}\neq 0$, all  $\theta(i,j)$ satisfy the regular quantum commutative relations of Lemma \ref{regular quantum commutative.relations}. 

Now, suppose that $\dim R_{t}\geq 2$ for some $1\leq t\leq m$. Let $\{v,v_{t}\}$ be a linearly independent set in $R_{t}$. Given $1\leq j\leq m$ and $a_{j}\in R_{j}$, we have $a_{j}=\theta(j,t)v_{t}a_{j}v_{t}^{-1}$, i.e $v_{t}^{-1}a_{j}=\theta(j,t)a_{j}v_{t}^{-1}$. Hence, 
\begin{align*}
    (v_{t}^{-1}v)a_{j} &=\theta(t,j)(v_{t}^{-1}a_{j})v\\
    &=\theta(t,j)(\theta(j,t)a_{j}v_{t}^{-1})v\\
    &=a_{j}(v_{t}^{-1}v).
\end{align*}

Since $1\leq j\leq m$ are arbitrary, we conclude that $0\neq v_{t}^{-1}v\in Z(M_{n}(K))$, i.e., $v_{t}^{-1}v=\lambda I_{n}$, for some $\lambda\in K^{\ast}$. In this case, $v=\lambda v_{t}$, which is a contradiction. Consequently, $\dim R_{i}=1$, for all $1\leq i\leq m$.   In particular, if $R=R_{1}\oplus \cdots \oplus R_{m}$, then this is a regular quantum commutative decomposition of $R$ and $m=n^{2}$. 
\end{proof}

\begin{Corollary}
    \label{always.minimal}
Let $R=M_{n}(K)$ and $\mathscr{R}\colon R=R_{1}\oplus \cdots \oplus R_{m}$ be a regular quantum commutative decomposition of $R$. Then, $\mathscr{R}$ is minimal. In particular, $\mathscr{R}$ is a group-grading. 
\end{Corollary}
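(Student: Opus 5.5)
The plan is to reduce both claims to Lemma \ref{weaky.hipothesis}, then to Lemma \ref{result.m.dim(R)} and Corollary \ref{twisted.group}.

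\textbf{Step 1: every component is a line.} Since $R=M_{n}(K)$ is finite dimensional and $\mathscr{R}$ is regular quantum commutative, the Corollary following Theorem \ref{general.finite.dimensional.case} shows that every $\theta(i,j)$ is a root of unity. All $R_{i}$ are nonzero, because condition (a) requires each $R_{i}$ to contain an element occurring in a nonzero product. Thus the hypotheses of Lemma \ref{weaky.hipothesis} hold, and it gives $R_{i}=\operatorname{Span}_{K}\{v_{i}\}$ with each $v_{i}$ invertible (indeed $v_{i}^{n}=\lambda_{i}I_{n}$), and $m=n^{2}$.

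\textbf{Step 2: minimality.} Suppose two distinct indices $s\neq t$ give equal rows, so $\theta(s,j)=\theta(t,j)$ for all $j$. Put $c=v_{s}v_{t}^{-1}$. For any $j$ and $a_{j}\in R_{j}$ we have
\[
ca_{j}=\theta(s,j)\theta(t,j)^{-1}a_{j}c=a_{j}c ,
\]
using that $v_{t}^{-1}a_{j}=\theta(t,j)^{-1}a_{j}v_{t}^{-1}$. The components $R_{j}$ span $R$, so $c\in Z(M_{n}(K))=K$, and hence $v_{s}=\lambda v_{t}$ for some scalar $\lambda$. This contradicts the directness of the sum $R_{s}\oplus R_{t}$. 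So no two rows coincide, and $\mathscr{R}$ is minimal. The column statement follows from the relation $\theta(i,j)=\theta(j,i)^{-1}$ of Lemma \ref{regular quantum commutative.relations}.

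\textbf{Step 3: group grading.} We now have that $\mathscr{R}$ is minimal, $Z(R)=K$, and $m=n^{2}=\dim R$. Lemma \ref{result.m.dim(R)} and Corollary \ref{twisted.group} then give a finite abelian group structure on $\{1,\dots,m\}$ for which $\mathscr{R}$ is a $G$-grading, with $\theta$ a skew-symmetric bicharacter.

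The only step needing real care is Step 2, specifically the inversion identity for $v_{t}^{-1}$, which is the same manipulation as in the last part of the proof of Lemma \ref{weaky.hipothesis}. Everything else is a direct invocation of earlier results.
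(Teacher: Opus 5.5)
Your proof is correct and follows the paper's argument essentially step for step. You use Lemma \ref{weaky.hipothesis} to get one-dimensional components $R_i=\operatorname{Span}_K\{v_i\}$ with invertible $v_i$ and $m=n^2$; you show that equal rows would make $v_sv_t^{-1}$ central, hence scalar, contradicting directness; and you then invoke Lemma \ref{result.m.dim(R)} to get the group grading. The one addition is that you explicitly check the hypotheses of Lemma \ref{weaky.hipothesis} (roots of unity, nonzero components), which the paper leaves implicit.
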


\begin{proof}
   By Lemma \ref{weaky.hipothesis}, $m=n^{2}$ and for all $1\leq i\leq m$ there exists $v_{i}\in R_{i}$ with $v_{i}^{n}\in K^{\ast}I_{n}$ and $R_{i}=\operatorname{Span}_{K}\{v_{i}\}$. Suppose by contradiction $\mathscr{R}$ is non-minimal, i.e there exist $1\leq i_{1},i_{2}\leq m$ with $i_{1}\neq i_{2}$ such that 
    \[
    \theta(i_{1},k)=\theta(i_{2},k),\mbox{ for all } 1\leq  k\leq m.
    \]
    Set $v:=v_{i_{1}}v_{i_{2}}^{-1}$, and let $1\leq k\leq m$. In this case, on the one hand we have \[
    v_{k}v_{i_{2}}=\theta(k,i_{2})v_{i_{2}}v_{k}\mbox{ that is } v_{k}=\theta(k,i_{2})\,(v_{i_{2}}v_{k}v_{i_{2}}^{-1}).
    \]
    On the other hand
    \begin{align*}
         vv_{k}=v_{i_{1}}v_{i_{2}}^{-1}v_{k} &= v_{i_{1}}v_{i_{2}}^{-1}(\theta(k,i_{2})v_{i_{2}}v_{k}v_{i_{2}}^{-1})\\
         &= \theta(k,i_{2})(v_{i_{1}}v_{k}v_{i_{2}}^{-1}) = (\theta(k,i_{2}) \theta(i_{1},k)) v_{k}v\\
         &= (\theta(k,i_{2})\theta(i_{2},k))v_{k}v=v_{k}v.
    \end{align*}
   Since $1\leq k\leq m$ is arbitrary, we conclude that $v\in Z(R)$ and so $v_{i_{1}}=\lambda v_{i_{2}}$, for some $\lambda\in K^{\ast}$, which implies that $0\neq v_{i_{1}}\in R_{i_{1}}\cap R_{i_{2}}$, which is a contradiction. Consequently, $\mathscr{R}$ is minimal and by Lemma \ref{result.m.dim(R)}, $\mathscr{R}$ is a group-grading.
\end{proof}

\begin{Theorem}
\label{realization.matrices}
Let $R=(\rR,\theta)$ be a regular quantum commutative algebra in the sense of Definition \ref{dQCD}, with $R=M_{n}(K)$. Then, $\mathscr{R}$ must be a $G$-regular grading with respect to a finite abelian group $G=(S,\star)$, $M_{G}^{R}=M^{R}$ and $\theta(i,j)$ is a bicharacter of $G$.  In particular, $G\cong \mathbb{Z}_{n_{1}}^{2}\times \cdots \times \mathbb{Z}_{n_{r}}^{2}$ where $n_{1}\cdots n_{r}=n$. 
\end{Theorem}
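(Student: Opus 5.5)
The plan is to combine Corollary \ref{always.minimal} with Lemma \ref{result.m.dim(R)} and Corollary \ref{twisted.group}, and then to identify the group through the nondegeneracy of its bicharacter.

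\textbf{Step 1: the hypotheses of Lemma \ref{result.m.dim(R)} hold.} Lemma \ref{weaky.hipothesis} gives $m=n^{2}=\dim R$, and each $R_i$ is one-dimensional, spanned by an invertible $v_i$ with $v_i^{n}\in K^{\ast}I_n$. Corollary \ref{always.minimal} gives that $\rR$ is minimal. Since $K$ is algebraically closed, $Z(M_n(K))=K$.

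\textbf{Step 2: the group structure and the bicharacter.} Corollary \ref{twisted.group} now applies. It produces a binary operation $\star$ on $S=\{1,\dots,m\}$ making $G=(S,\star)$ a finite abelian group and $\rR$ a $G$-grading. It also gives $M^R_G=M^R$ and shows that $\theta$ is a skew-symmetric bicharacter of $G$. Condition (a) of Definition \ref{dQCD} is part of the hypotheses, so the grading is regular. Minimality of $\rR$ means exactly that $\theta$ is nondegenerate: if $\theta(i,\cdot)\equiv 1$ then $i$ is the neutral element.

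\textbf{Step 3: $G\cong \mathbb{Z}_{n_1}^2\times\cdots\times\mathbb{Z}_{n_r}^2$.} Here I use the standard structure theorem for finite abelian groups carrying a nondegenerate alternating bicharacter. The bicharacter $\theta$ is alternating in the strong sense, $\theta(g,g)=1$, by the finite-dimensional lemma on $\theta(i,i)$ preceding Lemma \ref{non.nilp}. Such a pair $(G,\theta)$ is a direct sum of hyperbolic planes: $G=H\times H$ with $H\cong \mathbb{Z}_{n_1}\times\cdots\times\mathbb{Z}_{n_r}$ in symplectic duality. Hence $|G|=|H|^2=n^{2}$, so $n_1\cdots n_r=n$.

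To argue this directly, I would do the following.
\begin{itemize}
\item Nondegeneracy gives $G\cong \widehat G$ via $g\mapsto\theta(g,\cdot)$, since the values are roots of unity in $K^\ast$.
\item Pick $a\in G$ of maximal order $d$. Since $G\to\widehat G$ is an isomorphism and $d$ is the exponent, there is $b$ with $\theta(a,b)$ a primitive $d$-th root of unity. Because $\theta(a,a)=1$, we have $\langle a\rangle\cap\langle b\rangle=\{e\}$, so $\langle a,b\rangle\cong \mathbb{Z}_d^2$.
\item The restriction of $\theta$ to $\langle a,b\rangle$ is nondegenerate. Therefore $G=\langle a,b\rangle\times\langle a,b\rangle^{\perp}$, with $\theta$ nondegenerate and alternating on the complement.
\item Induct on $|G|$.
\end{itemize}

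An alternative for Step 3 is to cite the classification of fine gradings on $M_n(K)$ with one-dimensional components (Bahturin--Zaicev). These are the Pauli gradings by $\mathbb{Z}_{n_1}^2\times\cdots\times\mathbb{Z}_{n_r}^2$.

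\textbf{Main obstacle.} The hard part is Step 3, specifically the splitting $G=\langle a,b\rangle\times\langle a,b\rangle^\perp$ and the characteristic-$p$ subtlety. In characteristic $p$, $K^\ast$ has no elements of order $p$, so $\theta$ would be trivial on $p$-parts. Nondegeneracy then forces $p\nmid|G|$, and the argument goes through. I would verify this point explicitly.
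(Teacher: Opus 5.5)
Your proposal is correct and follows the paper's proof. Lemma~\ref{weaky.hipothesis} and Corollary~\ref{always.minimal} give $m=n^{2}=\dim R$ and minimality, with $Z(R)=K$, after which Corollary~\ref{twisted.group} yields the abelian group grading with bicharacter $\theta$; the paper then gets $G\cong\mathbb{Z}_{n_1}^2\times\cdots\times\mathbb{Z}_{n_r}^2$ by citing \cite[Theorem 2.15]{Kochetov.book}, whereas you also sketch the standard symplectic splitting, which is sound because you use $\theta(g,g)=1$ and note that nondegeneracy forces $\operatorname{char}K\nmid|G|$.
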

\begin{proof}
    Suppose that $m$ is the quantum length of $\mathscr{R}$. By Lemma \ref{weaky.hipothesis}, we have $m = n^{2} = \dim(R)$ and by Corollary \ref{always.minimal} $\mathscr{R}$ is minimal. Thus, the result follows from Corollary \ref{twisted.group} and \cite[Theorem 2.15]{Kochetov.book}. 
\end{proof}

\subsection{Non-simple regular quantum commutative algebras}\label{ssNSQCA}
\begin{Lemma}
\label{nilp.elements} Let $R=(\rR,\theta)$ be a finite-dimensional regular quantum commutative algebra with $\mathscr{R}$ minimal of quantum length $m$. Consider the Wedderburn--Malcev decomposition of $R$, $R = B+J(R)$, where $B = B_{1} \oplus \cdots \oplus B_{k}$ is the direct sum (as algebras) of simple subalgebras of $R$. Then, for any $1 \leq j \leq m$, there exists $x \in R_{j}$ such that $x = a + b$, where $a \in B$ and $b \in J(R)$, with $a \neq 0$. 
\end{Lemma}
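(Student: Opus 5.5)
We need to show that no component $R_j$ is contained in $J(R)$. Since $R = B\oplus J(R)$ as vector spaces, every $x\in R_j$ decomposes uniquely as $x=a+b$ with $a\in B$ and $b\in J(R)$. The statement therefore amounts to showing $R_j\not\subseteq J(R)$ for each $j$. I will argue by contradiction and use regularity together with the nilpotency of $J(R)$. Minimality will not actually be needed.

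Suppose that $R_j\subseteq J(R)$ for some $j$. Since $J(R)$ is a nilpotent ideal of the finite-dimensional algebra $R$, there is $N$ with $J(R)^N=\{0\}$. Now apply condition (a) of Definition \ref{dQCD} to the $N$-tuple $(j,j,\ldots,j)\in S^N$. It yields elements $a_1,\ldots,a_N\in R_j$ with $a_1\cdots a_N\neq 0$. However, each $a_i$ lies in $J(R)$, so the product lies in $J(R)^N=\{0\}$, which is a contradiction. Hence $R_j\not\subseteq J(R)$.

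To finish, pick $x\in R_j\setminus J(R)$ and write $x=a+b$ according to the Wedderburn--Malcev decomposition. If $a=0$, then $x=b\in J(R)$, which is impossible. So $a\neq 0$, as required.

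There is essentially no obstacle here. The only points to check are that the Wedderburn--Malcev decomposition is available and that it is a direct sum of vector spaces, both of which are part of the hypothesis. Alternatively, one could invoke Lemma \ref{non.nilp}, which gives a non-nilpotent $w_j\in R_j$; since every element of $J(R)$ is nilpotent, $w_j\notin J(R)$. Either route gives the claim directly.
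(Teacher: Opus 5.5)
Your argument is correct and is essentially the paper's proof: assuming $R_j\subseteq J(R)$, you apply regularity to the constant tuple $(j,\ldots,j)$ whose length is the nilpotency index of $J(R)$, which produces a nonzero element of $J(R)^N=\{0\}$. You are also right that minimality plays no role, and the paper's proof does not use it either.
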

\begin{proof}
    Since $R$ is finite dimensional, its Jacobson radical $J(R)$ is a nilpotent ideal. Let $q\in \mathbb{N}$ denote its nilpotency index. Suppose there exists $1\leq t\leq m$ such that all elements $x\in R_{t}$ belong to $J(R)$. Since $R$ is a regular quantum commutative algebra,  there exist $x_{i_{1}}$,\dots, $x_{i_{q}}\in R_{t}$ such that $x_{i_{1}}\cdots x_{i_{q}}\neq 0$ At the same time, $x_{i_{1}}\cdots x_{i_{q}} \in (J(R))^q=\{ 0\}$, which is a contradiction.  
\end{proof}

\begin{Remark}
\label{carrying.on.semissimple}
In particular, the elements $w_{1}\in R_{1}$,\dots, $w_{m}\in R_{m}$ of Lemma \ref{non.nilp} are of the form $w_{i}=\overline{w}_{i}+u_{i}$, where $r_{i}\in J(R)$ and $0\neq \overline{w}_{i}\in B$. Notice that 
\[
w_{i}w_{j}=\overline{w}_{i}\overline{w}_{j}+w,\quad w\in J(R)
\]
and
\[
w_{j}w_{i}=\overline{w}_{j}\overline{w}_{i}+w',\quad w'\in J(R),
\]
thus, since $w_{i}w_{j}=\theta(i,j)w_{j}w_{i}$, it follows that $0\neq \overline{w}_{i}\overline{w}_{j}=\theta(i,j)\overline{w}_{j}\overline{w}_{i}$. We observe that $\overline{w}_{1} \cdots \overline{w}_{m}$ is not nilpotent, because otherwise there exists $s \in \mathbb{N}$ such that $(w_{1} \cdots w_{m})^{s} \in J(R)$, which is a contradiction.

\end{Remark}
\begin{Lemma} 
\label{simple.component}
Let $R=(\rR,\theta)$ be a finite-dimensional regular quantum commutative algebra with $\mathscr{R}$ minimal of quantum length $m$. Consider the Wedderburn--Malcev decomposition of $R$, $R = B \oplus J(R)$, where $B = B_{1} \oplus \cdots \oplus B_{k}$ is the direct sum (as algebras) of  simple subalgebras of $R$. Then, there exists $1\leq \ell\leq k$ such that $B_{\ell}$ admits the structure of regular quantum commutative algebra of quantum length $m$, with the same regular quantum commutative factors as $R=(\rR,\theta)$.  
\end{Lemma}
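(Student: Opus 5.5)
We would build the regular quantum commutative structure on one simple component by projecting the homogeneous components of $R$ onto $B$ and then onto a suitable $B_\ell$. Let $\pi\colon R\to B$ be the projection along $J(R)$; it is an algebra homomorphism $R\to R/J(R)\cong B$. Let $\pi_\ell\colon B\to B_\ell$ be the projection onto the $\ell$-th simple summand, also an algebra homomorphism. For each $i$ set $\widetilde R_i=\pi_\ell(\pi(R_i))\subseteq B_\ell$. Then $B_\ell=\widetilde R_1+\cdots+\widetilde R_m$, since $\pi_\ell\circ\pi$ is surjective onto $B_\ell$. Because these are homomorphisms, the relations $ab=\theta(i,j)ba$ for $a\in R_i$, $b\in R_j$ pass to $\pi_\ell\pi(a)\,\pi_\ell\pi(b)=\theta(i,j)\,\pi_\ell\pi(b)\,\pi_\ell\pi(a)$. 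Hence condition (b) of Definition \ref{dQCD} holds in $B_\ell$ with the same function $\theta$, and $\theta$ consists of roots of unity by the Corollary to Theorem \ref{general.finite.dimensional.case}.

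To choose $\ell$, we use Lemma \ref{non.nilp} and Remark \ref{carrying.on.semissimple}. They give $w_i\in R_i$ with $\overline w_i=\pi(w_i)$ such that $\overline w_1\cdots\overline w_m$ is not nilpotent in $B$. Writing $\overline w_1\cdots\overline w_m=\sum_\ell \pi_\ell(\overline w_1)\cdots\pi_\ell(\overline w_m)$ as a sum over orthogonal summands, some $\ell$ has $\pi_\ell(\overline w_1)\cdots\pi_\ell(\overline w_m)$ non-nilpotent. We fix that $\ell$. Then every $\pi_\ell(\overline w_i)$ is non-nilpotent, in particular nonzero, so every $\widetilde R_i\neq\{0\}$. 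Moreover $B_\ell\cong M_{n}(K)$ since $K$ is algebraically closed.

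We are now in the situation of Lemma \ref{weaky.hipothesis}: $B_\ell=\widetilde R_1+\cdots+\widetilde R_m$ with nonzero summands and root-of-unity commutation factors. That lemma gives $\widetilde R_i=K v_i$ with $v_i$ invertible and $v_i^n$ scalar. Condition (a) of Definition \ref{dQCD} then holds automatically, because any product of invertible elements is nonzero.

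The main obstacle is showing that the sum is direct, i.e.\ that the $\widetilde R_i$ are distinct lines. Here we use minimality of $\mathscr R$. Suppose $\widetilde R_{i_1}=\widetilde R_{i_2}$ with $i_1\neq i_2$, so $v_{i_1}=\lambda v_{i_2}$. For every $k$ we have
\[
\theta(i_1,k)\,v_kv_{i_1}=v_{i_1}v_k=\lambda v_{i_2}v_k=\theta(i_2,k)\,\lambda v_kv_{i_2}=\theta(i_2,k)\,v_kv_{i_1}.
\]
Since $v_kv_{i_1}$ is invertible, this forces $\theta(i_1,k)=\theta(i_2,k)$ for all $k$. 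So rows $i_1$ and $i_2$ of $M^R$ coincide, contradicting minimality. Hence the lines $Kv_i$ are pairwise distinct.

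It remains to upgrade "pairwise distinct" to "independent". Suppose $\sum c_iv_i=0$ is a nontrivial relation with the fewest nonzero terms. Conjugating by $v_k$ multiplies each term $c_iv_i$ by $\theta(k,i)$, which yields a relation with fewer terms unless the rows $\theta(\cdot,i)$ coincide on the support of the relation. Minimality rules out such coincidences, so no nontrivial relation exists. Therefore $B_\ell=\widetilde R_1\oplus\cdots\oplus\widetilde R_m$ is a regular quantum commutative decomposition of quantum length $m$ with the same $\theta$.
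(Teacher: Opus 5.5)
Your proposal is correct and follows essentially the same route as the paper: project each $R_i$ onto $B$ and then onto the simple summand $B_\ell$ singled out by the non-nilpotent product $\overline{w}_1\cdots\overline{w}_m$, apply Lemma \ref{weaky.hipothesis} to get one-dimensional components spanned by invertible elements, and use minimality of $\mathscr{R}$ to force the sum to be direct. Your minimal-length-relation argument for directness is a cleaner version of the paper's step, which checks $C_j\cap(C_1+\cdots+C_{j-1})=\{0\}$ and implicitly needs $j$ chosen minimal so that $u_{1,\ell},\ldots,u_{j-1,\ell}$ are linearly independent.
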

\begin{proof} 
  For any $1\leq i\leq m$, $R_{i}$ has a basis $\{a_{i,1}+t_{i,1},\ldots, a_{i,n_{i}}+t_{i,n_{i}}\}$, where $a_{i,j}\in B $ and $t_{i,j}\in J(R)$, $1\leq j\leq n_{i}$. We set
  \[
  T_{i}=\operatorname{Span}_{K}\{a_{i,1},\ldots, a_{i,n_{i}}\},\:1\leq i\leq m.
  \]
   In this case, $B=T_{1}+ \cdots + T_{m}$. Let us consider $w_{1}$,\ldots, $w_{m}$, as in Lemma \ref{non.nilp}, and write $w_{i}=\overline{w_{i}}+v_{i}$, $\overline{w}_{i}\in B$ and $v_{i}\in J(R)$. By Remark \ref{carrying.on.semissimple}, for all $1\leq i,j\leq m$, we have:
   \begin{enumerate}
       \item[(i)] $\overline{w}_{i}\in T_{i}$;
       \item[(ii)] $\overline{w}_{1}\cdots \overline{w}_{m}$ is non-nilpotent;
       \item[(iii)] $b_{i}b_{j}=\theta(i,j)b_{j}b_{i}$, for all $b_{i}\in T_{i}$ and $b_{j}\in T_{j}$.
   \end{enumerate}

 Denoting by $e_{1}\in B_{1},\dots, e_{k}\in B_{k}$ the central orthogonal idempotent elements of $B$, for any $1\leq i\leq m$ we write 
\[
\overline{w}_{i}=u_{i,1}+\cdots +u_{i,k},\quad u_{i,t}\in B_{t},\quad 1\leq t\leq k,
\]
 in this way, we get 
\begin{equation}
\label{eq.S.C}
u_{i,t}u_{j,t}=\theta(i,j)u_{j,t}u_{i,t},\quad \text{for all}\quad 1\leq t\leq k.     
\end{equation}
Suppose that for all $1\leq t\leq k$, the element $h_{t}:=u_{1,t}\cdots u_{m,t}$ is nilpotent, say $h_{t}^{d_{t}}=0$ for some $d_{t}\in \mathbb{N}$. Given $s>d_{1}+\cdots +d_{k}$, by regularity there exists $\lambda\in K^{\ast}$ such that
\[
(\overline{w}_{1}\cdots  \overline{w}_{m})^{s}=\lambda H_{1}^{s}\cdots H_{k}^{s}=0
\]
which is a contradiction. We conclude there exists $1\leq \ell\leq k$ such that $h_{\ell}$ is non-nilpotent. Defining $C_{i}:=T_{i}e_{\ell}$, we have $u_{i,N}\in C_{i}$, for all $1\leq i\leq m$ and 
\[
B_\ell=Be_{\ell}=C_{1}+\cdots +C_{m}.
\]
 By Lemma \ref{weaky.hipothesis}, it follows that for any $1\leq i\leq m$, $u_{i,\ell}$ is invertible and  $C_{i}=\operatorname{Span}_{K}\{u_{i,\ell}\}$. 

Suppose there exists $2\leq j\leq m$ such that 
\[
u_{j,\ell}\in C_{j}\cap (C_{1}+\cdots+C_{j-1}).
\]
Then, there exist $\gamma_{1}$,\dots, $\gamma_{j-1}\in K$ such that $u_{j,\ell}=\gamma_{1}u_{1,\ell}+\cdots +\gamma_{j-1}u_{j-1,\ell}$. Without loss of generality, we can assume $\gamma_{d}\neq 0$, for all $1\leq d\leq j-1$. Now, for any $1\leq i\leq m$, we get
\begin{align*}
    u_{i,\ell}u_{j,\ell} &=\gamma_{1}u_{i,\ell}u_{1,\ell}+\cdots+\gamma_{j-1}u_{i,\ell}u_{j-1,\ell}\\
    &=\theta(i,1)(\gamma_{1}u_{1,\ell}u_{i,\ell})+\cdots +\theta(i,j-1)(\gamma_{j-1}u_{j-1,\ell}u_{i,\ell}).
\end{align*}
On the other hand,
\begin{align*}
   u_{i,\ell}u_{j,\ell} &=\theta(i,j)u_{j,\ell}u_{i,\ell}\\
   &= \theta(i,j)(\gamma_{1}u_{1,\ell}u_{i,\ell})+\cdots +\theta(i,j)(\gamma_{j-1}u_{j-1,\ell}u_{i,\ell}).
\end{align*}
In particular, $\theta(i,j)=\theta(i,1)$. Since $1\leq i\leq m$ is arbitrary, it follows that the columns $(\theta(i,1))_{1\leq i\leq m}$ and $(\theta(i,j))_{1\leq i\leq m}$ coincide, contradicting the minimality of $\mathscr{R}$. Therefore, by \cite[Lemma 6.6]{HOFFMAN}, $B_{\ell}=C_{1}\oplus \cdots \oplus C_{m}$. Hence, again by Lemma \ref{weaky.hipothesis},
\[
\mathscr{B}_{\ell}\colon B_{\ell}=C_{1}\oplus \cdots \oplus C_{m}
\]
is a regular quantum commutative decomposition for $B_{\ell}$. By Equation (\ref{eq.S.C}), the  regular quantum commutative factors are the same as in $R=(\rR,\theta)$. 
\end{proof}

We now observe that the first part of the proof of Lemma \ref{simple.component}, i.e., the part that ensures the existence of a simple subalgebra $B_{\ell}$ with $m$ nonzero components, does not use the minimality of $\mathscr{R}$. So if we combine this argument with Lemma \ref{weaky.hipothesis}, we obtain the following result.
\begin{Proposition}
\label{weaky.sum.simple}
Let $R=(\rR,\theta)$ be a finite dimensional regular quantum commutative algebra of quantum length $m$. Consider the Wedderburn--Malcev decomposition $R = B \oplus J(R)$, where $B = B_{1} \oplus \cdots \oplus B_{k}$ is the direct sum of  simple subalgebras of $R$. Then, given $1\leq i\leq k$, there exist indices $i_{1},\dots, i_{t}\in \{1,\ldots,m\}$ such that $B_{i}$ admits a decomposition
\[
B_{i}=D_{i_{1}}+\cdots+D_{i_{t}},
\]
where each $D_{i_j}$ is a subspace of $B_{i}$, satisfying:
\begin{enumerate}
    \item[(i)] For each $1\leq s\leq t$, there exists $0\neq u_{s}\in D_{i_{s}}$ such that $D_{i_{s}}=\operatorname{Span}_{K}\{u_{s}\}$ and $u_{s}^{q_{i}}=\mu_{s}1_{B_{i}}$ for some $\mu_{s}\in K^{\ast}$, where $q_{i}:=\sqrt{\dim B_{i}}$.
    \item[(ii)] For any $1\leq r,s \leq t$, $u_{r}u_{s}=\theta(i_{r},i_{s})u_{s}u_{r}$.
\end{enumerate}
Moreover, there exists $1\leq \ell\leq k$ and subspaces $C_{1}$,\ldots, $C_{m}$ of $B_{\ell}$ such that $B_{\ell}$ admits a decomposition $B_{\ell}=C_{1}+\cdots+C_{m}$ satisfying {\rm (i)} and {\rm (ii)}.
\end{Proposition}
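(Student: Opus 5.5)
We reproduce the first half of the argument of Lemma \ref{simple.component}, which never used minimality, and then feed the projections into Lemma \ref{weaky.hipothesis}. Since $K$ is algebraically closed, each $B_i\cong M_{q_i}(K)$ with $q_i=\sqrt{\dim B_i}$. Let $e_1,\dots,e_k$ be the central orthogonal idempotents of $B$, and let $\pi\colon R\to B$ be the projection along $J(R)$; it is an algebra homomorphism because $J(R)$ is an ideal. Put $T_j=\pi(R_j)$, so $B=T_1+\cdots+T_m$. Since $\pi$ is a homomorphism, the relation $ab=\theta(j,j')ba$ for $a\in R_j$, $b\in R_{j'}$ passes to $T_j$ and $T_{j'}$. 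Multiplying by the central idempotent $e_i$ then gives
\[
B_i=Be_i=T_1e_i+\cdots+T_me_i ,
\]
and the subspaces $T_je_i$ still satisfy $xy=\theta(j,j')yx$.

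Fix $1\le i\le k$. Let $i_1,\dots,i_t$ be the indices $j$ with $T_je_i\neq\{0\}$, and set $D_{i_s}=T_{i_s}e_i$. Discarding the zero summands, $B_i=D_{i_1}+\cdots+D_{i_t}$ is a sum of nonzero subspaces of $M_{q_i}(K)$. By Theorem \ref{general.finite.dimensional.case} (or its corollaries), the $\theta(i_r,i_s)$ are roots of unity. Hence the hypotheses of Lemma \ref{weaky.hipothesis} hold with $n=q_i$. That lemma gives $D_{i_s}=\operatorname{Span}_K\{u_s\}$ with $u_s^{q_i}=\mu_s1_{B_i}$, which is (i); (ii) is the inherited commutation relation. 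There is one point to check in applying the lemma. Its proof uses the existence of some product $v_iv\neq 0$ only to show that each $v_i$ can be chosen invertible. It then derives invertibility from the centrality of $v_i^{q_i}$, and it needs $v_i^{q_i}\neq 0$ for that.

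I expect this nondegeneracy to be the main obstacle. I would argue it directly. Take $0\ne x\in D_{i_s}$. Then $x^{q_i}$ commutes with every summand, because $\theta^{q_i}=1$ by Theorem \ref{general.finite.dimensional.case} applied in $M_{q_i}(K)$. So $x^{q_i}$ is central in $B_i$, i.e. a scalar. If that scalar is $0$, then $x$ is nilpotent and $x^{q_i-1}$ generates a nilpotent ideal of $B_i$. This can be checked using the fact that $x$ normalizes every summand up to scalars, which gives $(B_ix^{q_i-1})^N\subseteq B_ix^{N(q_i-1)}$. A nilpotent ideal of a simple algebra must be zero, which yields a contradiction unless $q_i=1$, and that case is trivial. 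Hence $x$ is invertible, and the argument of Lemma \ref{weaky.hipothesis} then also gives $\dim D_{i_s}=1$.

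For the final assertion, follow Lemma \ref{simple.component} verbatim. Take $w_1,\dots,w_m$ from Lemma \ref{non.nilp}. By Remark \ref{carrying.on.semissimple}, $\pi(w_1)\cdots\pi(w_m)$ is non-nilpotent. Writing $\pi(w_j)=\sum_t u_{j,t}$ with $u_{j,t}\in B_t$, quantum commutativity gives
\[
(\pi(w_1)\cdots\pi(w_m))^s=\lambda\prod_t h_t^s,\qquad h_t=u_{1,t}\cdots u_{m,t}.
\]
So some $h_\ell$ is non-nilpotent, and in particular every $u_{j,\ell}\neq 0$. Thus $C_j:=T_je_\ell\neq 0$ for all $j$, so $t=m$ for this $B_\ell$, and the first part applied to $i=\ell$ gives $B_\ell=C_1+\cdots+C_m$ satisfying (i) and (ii).
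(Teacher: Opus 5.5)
Your overall route matches the paper's. You project onto $B$ along $J(R)$, cut by the central idempotents $e_i$, feed the nonzero pieces $T_je_i$ into Lemma \ref{weaky.hipothesis}, and take the distinguished $B_\ell$ from the first half of Lemma \ref{simple.component}. You are also right that Lemma \ref{weaky.hipothesis} silently needs $v_i^{q_i}\neq 0$. Its proof cites Theorem \ref{general.finite.dimensional.case} after exhibiting only some product $v_iv\neq 0$, which is not what that theorem requires.

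However, your repair of this point does not work as written, for two reasons.
\begin{enumerate}
\item You get $\theta(i_r,i_s)^{q_i}=1$ from Theorem \ref{general.finite.dimensional.case} in $M_{q_i}(K)$. That theorem needs $f\in D_{i_r}$, $g\in D_{i_s}$ with $f^{q_i}$, $g^{q_i}$, $(fg)^{q_i}$ all nonzero. That is exactly the non-nilpotency you are trying to prove, so the step is circular. The corollaries applied to $R$ only say the $\theta$'s are roots of unity of \emph{some} order, not of order dividing $q_i$.
\item The nilpotent-ideal argument applied to $x^{q_i-1}$ only gives $x^{q_i-1}=0$. That contradicts nothing, since a nonzero nilpotent $q_i\times q_i$ matrix can have nilpotency index smaller than $q_i$.
\end{enumerate}

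Both problems disappear if you reverse the order, using the observation you already make.
\begin{enumerate}
\item Take $0\neq x\in D_{i_s}$ and any $y\in D_{i_r}$. Then $xy=\theta(i_s,i_r)yx$ with $\theta(i_s,i_r)\neq 0$, so $xB_i=B_ix$.
\item This is a nonzero two-sided ideal of the simple algebra $B_i$, hence equals $B_i$, so $x$ is invertible in $B_i$. No nilpotency discussion and no appeal to Theorem \ref{general.finite.dimensional.case} are needed.
\item Only now take determinants in $B_i\cong M_{q_i}(K)$ to get $\theta(i_r,i_s)^{q_i}=1$.
\item Then $x^{q_i}$ is central and invertible, i.e.\ a nonzero multiple of $1_{B_i}$.
\item The $v_t^{-1}v$ argument of Lemma \ref{weaky.hipothesis}, using $\theta(j,t)\theta(t,j)=1$ from the regular quantum commutative relations of $R$, gives $\dim D_{i_s}=1$.
\end{enumerate}

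There is also a small slip in the last part, copied from the paper. Since the $B_t$ are orthogonal ideals, $(\pi(w_1)\cdots\pi(w_m))^s=\sum_t h_t^s$, not $\lambda\prod_t h_t^s$; the product over $t$ is $0$ as soon as $k\geq 2$. With the sum, your conclusion is correct: some $h_\ell$ is non-nilpotent, hence every $C_j=T_je_\ell$ is nonzero.
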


The next result gives a positive answer to the Bahturin--Regev Conjecture \ref{Conjecture.Bahturin.Regev} for finite dimensional regular quantum commutative algebras over an algebraically closed field whose characteristic does not divide the quantum length of the respective algebra. 

\begin{Remark}
It is worth noting that the Bahturin--Regev Conjecture holds for finite dimensional commutative regular quantum commutative algebras, in view of Remark \ref{commutative.}.
\end{Remark}

\begin{Theorem}
\label{Bahturin.Regev}
Let $R=(\rR,\theta)$ be a finite dimensional regular quantum commutative algebra of quantum length $m$. Suppose $\operatorname{char}(K)\nmid m$.  Then, $\mathscr{R}$ is minimal if and only if $\det M^{R}\neq 0$. Moreover, in this case,  $m=\dim C$ where $C$ is a simple subalgebra of $R$ and $\det M^{R}=\pm m^{m/2}$.
\end{Theorem}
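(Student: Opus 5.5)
The direction $\det M^{R}\neq 0\Rightarrow$ minimal is already noted before Conjecture \ref{Conjecture.Bahturin.Regev}: two equal rows force $\det M^{R}=0$. So the real work is the converse, together with the identification of $m$ and of the determinant. Assume $\rR$ is minimal. The plan is to pass to a simple component carrying the whole decomposition and then use the group-graded case.

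\textbf{Step 1: pass to a simple component.} Take the Wedderburn--Malcev decomposition $R=B\oplus J(R)$, with $B=B_1\oplus\cdots\oplus B_k$. By Lemma \ref{simple.component}, some $B_\ell$ has a regular quantum commutative decomposition
\[
\mathscr{B}_\ell\colon B_\ell=C_1\oplus\cdots\oplus C_m
\]
of the same length $m$ and with the same function $\theta$. In particular $M^{B_\ell}=M^{R}$. Since $K$ is algebraically closed, $B_\ell\cong M_q(K)$, and by Lemma \ref{weaky.hipothesis} each $C_i$ is one-dimensional, so $m=q^2=\dim B_\ell$. Put $C:=B_\ell$; this gives $m=\dim C$ with $C$ a simple subalgebra of $R$.

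\textbf{Step 2: apply the graded case.} Theorem \ref{realization.matrices}, applied to $C=M_q(K)$ with $\mathscr{B}_\ell$, shows that $\mathscr{B}_\ell$ is a regular grading by a finite abelian group $G=(S,\star)$ of order $m$. Moreover $\theta$ is a skew-symmetric bicharacter of $G$, and $M^{R}=M^{C}_G$. Minimality of $\rR$ is equivalent to non-degeneracy of this bicharacter, since it is literally the condition that no two rows of the same matrix coincide (Corollary \ref{always.minimal} gives it directly for $C$ anyway). The hypothesis $\operatorname{char}(K)\nmid m=|G|$ is exactly what Proposition \ref{Bahturin.Regev.Grading} needs. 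The computation in its proof gives
\[
(M^{R})^2=mI_m,
\]
and hence $\det M^{R}\neq 0$.

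\textbf{Step 3: compute the determinant.} Taking determinants in $(M^{R})^2=mI_m$ gives $(\det M^{R})^2=m^m$, so $\det M^{R}=\pm m^{m/2}$. Here $m=q^2$, so $m^{m/2}=q^{m}$ is a genuine element of the prime field and no choice of square root is involved.

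\textbf{Main obstacle.} The delicate point is making sure that the data transferred to $B_\ell$ are really the same as those of $R$, namely the same index set and the same $\theta$, so that $M^{R}=M^{B_\ell}$ entry by entry. This is exactly where minimality enters in Lemma \ref{simple.component}: it makes the sum of the $C_i$ direct. Once that is in place, everything else is bookkeeping using the earlier results.
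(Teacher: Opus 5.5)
Your proposal is correct and follows the paper's proof essentially step for step. You pass via Lemma \ref{simple.component} to a simple component $C=B_\ell$ with $M^{C}=M^{R}$, realize its decomposition as a regular grading by an abelian group of order $m=\dim C$ via Theorem \ref{realization.matrices}, and then use $(M^{R})^{2}=mI_m$ from Proposition \ref{Bahturin.Regev.Grading} together with $\operatorname{char}(K)\nmid m$; your remark that $m=q^2$ makes $m^{m/2}=q^{m}$ well defined is a small clarification the paper leaves implicit.
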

\begin{proof} We know that if $\det M^{R}\neq 0$, then $\mathscr{R}$ is minimal. Therefore we assume that $\mathscr{R}$ is minimal.  Consider the Wedderburn--Malcev decomposition of $R$, $R = B \oplus J(R)$, where $B = B_{1} \oplus \cdots \oplus B_{k}$ is the direct sum (as algebras) of simple subalgebras of $R$. 

By Lemma \ref{simple.component}, there exists $1\leq \ell\leq k$ such that  $C:=B_{\ell}$ admits a regular quantum commutative decomposition $\mathscr{C}$ which is minimal and has the same quantum decomposition matrix as $\mathscr{R}$, i.e., $M^{C}=M^{R}$. By Theorem \ref{realization.matrices}, $\mathscr{C}$ is a $G$-regular grading,  with $G$ a finite abelian group and $|G|=m=\dim C$ and $M_{G}^{C}=M^{C}=M^{R}$. It follows that the regular decomposition of $C$ as a $G$-graded regular algebra is minimal. Thus by Proposition \ref{Bahturin.Regev.Grading} we get $(\det M^{R})^{2}=(\det M_{G}^{C})^{2}=m^{m}$, i.e., $\det M^{R}=\pm m^{m/2}$. 
\end{proof}
 \begin{Corollary}  Let $R=(\rR,\theta)$ be a finite-dimensional regular quantum commutative algebra of quantum length $m$. Suppose $\operatorname{char}(K)=0$ and $\mathscr{R}$ is minimal. Then $m=\exp(R)$.
\end{Corollary}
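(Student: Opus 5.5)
The plan is to chain Theorem \ref{Bahturin.Regev} with the Giambruno--Zaicev description of the PI-exponent recalled in Section \ref{s2}. Take the Wedderburn--Malcev decomposition $R=B_{1}\oplus\cdots\oplus B_{k}\oplus J(R)$. Lemma \ref{simple.component} and Theorem \ref{Bahturin.Regev} (applicable since $\operatorname{char}(K)=0$) give a simple component $C=B_{\ell}$ with $m=\dim C$. A single simple component $B_{\ell}$ is an admissible chain in the exponent formula, so $\exp(R)\geq \dim B_{\ell}=m$.

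For the reverse inequality $\exp(R)\le m$, I would compare codimensions directly rather than analyse chains $B_{i_{1}}J\cdots JB_{i_{p}}$. Let $g=\sum_\sigma\lambda_\sigma x_{\sigma(1)}\cdots x_{\sigma(n)}$ be multilinear. By the computation (\ref{eq.multilinear}) in Proposition \ref{commutattive.PI}, $g$ is an identity of $R$ as soon as
\[
\sum_{\sigma}\Lambda_\sigma(\ell_1,\ldots,\ell_n)\lambda_\sigma=0 \quad\text{for all } (\ell_1,\ldots,\ell_n)\in\{1,\ldots,m\}^n,
\]
because multilinearity lets us evaluate on homogeneous elements. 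Conversely, condition (a) of regularity supplies $b_i\in R_{\ell_i}$ with $b_1\cdots b_n\neq0$, so the coefficient must vanish for every tuple. Hence $P_n\cap T(R)$ is exactly the solution space of this system. The system depends only on $\theta$, that is, on $M^R$. The component $C$ carries a regular decomposition with the same matrix $M^{C}=M^{R}$, so the same argument gives $P_n\cap T(R)=P_n\cap T(C)$ for every $n$. This is the Regev--Seeman argument (\cite[Theorem 3.1]{regevz2}).

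Therefore $c_n(R)=c_n(C)$. Since $C\cong M_q(K)$ is central simple, $\exp(C)=\dim C=m$, and so $\exp(R)=m$. In fact this second argument gives the whole equality by itself, making the first inequality redundant.

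The main point to check is the claim that the coefficient system characterizes the multilinear identities exactly, and in particular that the regularity condition (a) holds for the decomposition of $C$. Both Lemma \ref{simple.component} and Lemma \ref{weaky.hipothesis} guarantee this, since $C$ has invertible homogeneous elements. Once that is verified, the rest is bookkeeping.
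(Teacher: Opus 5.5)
Your proposal is correct and follows the paper's proof. Theorem~\ref{Bahturin.Regev} (via Lemma~\ref{simple.component}) gives a simple component $C$ with $m=\dim C$ and $M^{C}=M^{R}$. Equal decomposition matrices then force $P_n\cap T(R)=P_n\cap T(C)$ by the Regev--Seeman criterion, so $\exp(R)=\exp(C)=\dim C=m$. The only differences are that you re-derive the Regev--Seeman step inline from (\ref{eq.multilinear}) instead of citing it, and that your preliminary lower bound $\exp(R)\ge m$ from the Wedderburn--Malcev exponent formula is, as you note yourself, redundant.
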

\begin{proof}  By Theorem \ref{Bahturin.Regev}, $m=\dim C$, where $C$ is a simple subalgebra of $R$, and because $C$ is  central simple we have $m=\exp(C)$. Now, since $M^{C}=M^{R}$, by \cite[Theorem 3.1]{regevz2} we get $T(R)=T(C)$.  Consequently, $\exp(R)=\exp(C)=m$. 
\end{proof}

\section{Regular quantum commutative decompositions and set-gradings}\label{s6}

The restriction on the center in Lemma \ref{result.m.dim(R)} cannot be removed; that is, the minimality alone does not guarantee that a regular quantum commutative decomposition is a set grading.  
The following example illustrates a regular quantum commutative decomposition in a non-central algebra for which the decomposition is minimal but not a set grading.

\begin{Example}
\label{Example.non.set}
Let $R = M_{2}(K) \oplus M_{4}(K)$ and let $\xi= \sqrt{-1}$.  Consider $P = \operatorname{diag}(1,\xi,\xi^{2},\xi^{3}) \in M_{4}(K)$ and $D = \operatorname{diag}(1,-1) \in M_{2}(K)$, the Sylvester's clock matrices, and $Q = \begin{pmatrix}
0 & 1 & 0 & 0\\
0 & 0 & 1 & 0\\
0 & 0 & 0 & 1\\
1 & 0 & 0 & 0
\end{pmatrix} \in M_{4}(K)$ and $N = \begin{pmatrix}
0 & 1\\
1 & 0
\end{pmatrix} \in M_{2}(K)$, the shift matrices.  Define 
\[
R_{(0,0)}:=\operatorname{Span}_{K}\{(I_{2},0)\}\oplus \operatorname{Span}_{K}\{(0,I_{4})\},\quad R_{(2,0)}:=\operatorname{Span}_{K}\{(D,0)\}\oplus \operatorname{Span}_{K}\{(0,P^{2})\},
\]
\[
R_{(0,1)}:=\operatorname{Span}_{K}\{(N,0)\}\oplus \operatorname{Span}_{K}\{(0,Q)\},\quad R_{(2,1)}:=\operatorname{Span}_{K}\{(DN,0)\}\oplus \operatorname{Span}_{K}\{(0,P^{2}Q)\},
\]
and
\[
R_{(i,j)}:= \operatorname{Span}_{K}\{(0,P^{i}Q^{j})\},\quad \text{for all }\quad (i,j)\in \{0,1,2,3\}^{2}\setminus\{(0,0),(2,0),(0,1),(2,1)\}.
\]

Since the projections of $R_{(k,\ell)}$ onto $M_{2}(K)$ and $M_{4}(K)$ are the Pauli's grading respectively, it follows that 
\[
\mathscr{R}\colon R=\bigoplus_{(k,\ell)\in \{0,1,2,3\}^{2}} R_{(k,\ell)}
\]
is a regular quantum commutative decomposition of $R$ whose the quantum decomposition matrix is given by 
\[
M^{R}=(\xi^{jk-i\ell})_{((i,j),(k,\ell))}.
\]

 By \cite[Proposition 2.3]{bcommutation}, $\det M^{R}\neq 0$, thus  $\rR$ is minimal. On the other hand, $\mathscr{R}$ is not a set-grading because $(N^{2},0)=(I_{2},0)\in R_{(0,0)}$ but $(0,Q^{2})\in R_{(0,2)}$. 
\end{Example}

The following example presents a regular quantum commutative decomposition that is a set-grading, but whose grading is not realized as a group-grading.

\begin{Example}
\label{Example.length}
In $M_{6}(K)$, consider the following matrices $L=\operatorname{diag}(D,D,D)$ and $J=\operatorname{diag}(0,0,N)$, where $D$ and $N$ are as Example \ref{Example.non.set}. These matrices satisfy the following relations
\begin{equation}
\label{def.equation}
 L^{2}=I_{6},\quad    J^{2}=\operatorname{diag}(0,0,I_{2}),\quad \text{and}\quad  LJ=-JL.
\end{equation}

Therefore if $R$ is the subalgebra of $M_{6}(K)$ generated by $L$, and $J$, it can be seen that the set $\{I_{6},J,J^{2},L,LJ,LJ^{2}\}$ is a basis of $R$. Now, we can write
\[
\mathscr{R}\colon \quad R=\operatorname{Span}_{K}\{I_{6}\}\oplus  \operatorname{Span}_{K}\{J^{2}\}\oplus \operatorname{Span}_{K}\{LJ\}\oplus \operatorname{Span}_{K}\{LJ^{2}\}\oplus \operatorname{Span}_{K}\{J\}\oplus \operatorname{Span}_{K}\{L\}.
\]
Notice $(I_{6})(J^{2})(LJ)(LJ^{2})(J)L=\operatorname{diag}(0,0,-D)$, which is non-nilpotent. Therefore, by the relations (\ref{def.equation}), it follows that $\mathscr{R}$ is a regular quantum commutative decomposition of $R$ with quantum decomposition matrix given by
\[
M^{R}=\begin{pmatrix}
1 & 1 & 1 & 1 & 1 & 1  \\
1 & 1 & 1 & 1 & 1 & 1 \\
1 & 1 & 1 & -1 & -1 & -1 \\
1 & 1 & -1 & 1 & -1 & 1\\
1 & 1 & -1 & -1 & 1 & -1\\
1 & 1 & -1 & 1 & -1 & 1
\end{pmatrix}.
\]
In particular, $\mathscr{R}$ is not minimal. Observe that the subalgebra generated by $D$ and $N$ is isomorphic to $M_{2}(K)$. Since $LJ^{2}=\operatorname{diag}(0,0,D)$, it follows that the subalgebra $\langle J,LJ^{2}\rangle$ is also isomorphic to $M_{2}(K)$. Now, let $C:=\operatorname{diag}(D,D,0)$. Then $C^{2}=\operatorname{diag}(I_{2},I_{2},0)$, which we identify with the $4\times 4$ identity matrix. Moreover,
\[
L=C+LJ^{2},\quad C(LJ^{2})=0,\quad \text{and}\quad I_{6}=C^{2}+(LJ^{2})^{2}.
\]
On the other hand, if $\mathcal{V}$ is the subalgebra generated by $C$, then $\mathcal{V}\cong K[x]/(x^{2}-1)\cong K\oplus K$. Hence,
\[
R\cong K\oplus K\oplus M_{2}(K).
\]

Of course, $\mathscr{R}$ is a set-grading. Suppose it is realized by a $G$-grading, where $G$ is a finite group, i.e. there exist $g_{1}$,\dots, $g_{6}\in G$ such that 
\[
R_{g_{1}}=\operatorname{Span}_{K}\{I_{6}\},\quad R_{g_{2}}=\operatorname{Span}_{K}\{J^{2}\},\quad R_{g_{3}}=\operatorname{Span}_{K}\{LJ\}
\]

\[
R_{g_{4}}= \operatorname{Span}_{K}\{LJ^{2}\},\quad R_{g_{5}}=\operatorname{Span}_{K}\{J\},\quad \text{and}\quad R_{g_{6}}=\operatorname{Span}_{K}\{L\}.
\]

Using (\ref{def.equation}), we get
\[
g_{2}g_{4}=g_{4},\quad g_{1}g_{4}=g_{4}
\]
i.e $g_{2}g_{4}=g_{1}g_{4}$. Thus, $g_{2}=g_{1}$ which is a contradiction. 
\end{Example}

\begin{Theorem}
\label{group.grading.matrices.algebra}
Let $R=M_{n}(K)$ and $\rR\colon R=R_{1}+ \cdots + R_{l}$ be a decomposition of $R$ as (not
necessarily direct) a  sum of vector spaces satisfying the following:
\begin{enumerate}
    \item[(i)] $R_{i}\neq 0$, for all $1\leq i\leq l$;
    \item[(ii)] for all $1\leq i,j\leq l$, there exists a root of unity $\theta(i,j)\in K^{\ast}$ such that
\[
a_{i}a_{j}=\theta(i,j)a_{j}a_{i}
\]
for all $a_{i}\in R_{i}$ and $a_{j}\in R_{j}$.
\end{enumerate}

Then we have the following possibilities:
\begin{enumerate}
    \item[(a)] If $\rR$ is the direct sum, then $l=n^{2}$ and $\rR$ is a regular group grading by a finite abelian group.
    \item[(b)] If $\rR$ is not a direct sum, then there exists $\{i_{1},\ldots, i_{q}\}$, a subset of $\{1,\ldots, l\}$, such that:
    \begin{enumerate}
        \item[(b.1)]  For all $j\in \{1,\ldots,l\}\setminus\{i_{1},\ldots, i_{q}\}$,   $R_{j}=R_{i_{k}}$ for some $1\leq k\leq q$.
        \item[(b.2)] $\rR'\colon R=R_{i_{1}}\oplus \cdots \oplus R_{i_{q}}$, $q=n^{2}$  and $\rR'$ is a regular group grading by a finite abelian group.
    \end{enumerate}
\end{enumerate}
 Moreover, in each case, if $\mathcal{P}:=(\theta(i,j))_{1\leq i,j\leq l}$ and $\mathcal{P}_{1}$,\dots, $\mathcal{P}_{l}$ denotes the rows of $\mathcal{P}$, we have
 \[
 n^{2}=\dim R=\# \{\mathcal{P}_{1},\ldots, \mathcal{P}_{l}\}.
 \]
\end{Theorem}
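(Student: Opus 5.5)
The plan is to reduce everything to Lemma \ref{weaky.hipothesis}, Corollary \ref{always.minimal} and Theorem \ref{realization.matrices}. First apply Lemma \ref{weaky.hipothesis} to the (not necessarily direct) decomposition $R=R_{1}+\cdots+R_{l}$. Hypotheses (i) and (ii) of the theorem are exactly those of the lemma. So we obtain invertible $v_{i}\in R_{i}$ with $R_{i}=\operatorname{Span}_{K}\{v_{i}\}$ and $v_{i}^{n}\in K^{\ast}I_{n}$. Moreover, $v_{1}\cdots v_{m}\neq 0$, and any product of the $v_i$ is invertible, so regularity condition (a) holds automatically and the $\theta(i,j)$ satisfy the relations of Lemma \ref{regular quantum commutative.relations}.

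\textbf{Case (a).} If the sum is direct, the lemma already says it is a regular quantum commutative decomposition with $l=n^{2}$. Theorem \ref{realization.matrices} (via Corollary \ref{always.minimal} and Corollary \ref{twisted.group}) then shows it is a regular grading by a finite abelian group.

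\textbf{Case (b).} Since every $R_{i}$ is one-dimensional, two summands either coincide or intersect trivially. Choose a system of representatives $R_{i_{1}},\ldots,R_{i_{q}}$ of the distinct lines among $R_{1},\ldots,R_{l}$; this gives (b.1). The key point is that the distinct lines are linearly independent. Suppose, with $j$ minimal, that $v_{i_{j}}=\sum_{d<j}\gamma_{d}v_{i_{d}}$ with all $\gamma_{d}\neq 0$. I would multiply on the left by an arbitrary $v_{k}$, commute it past, and use invertibility of $v_{k}$ exactly as in the proof of Lemma \ref{simple.component}. This forces $\theta(k,i_{j})=\theta(k,i_{d})$ for all $k$. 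Then the argument of Corollary \ref{always.minimal} applies: $v_{i_{j}}v_{i_{d}}^{-1}$ commutes with every $v_{k}$, hence is central, hence scalar. So the lines $R_{i_{j}}$ and $R_{i_{d}}$ coincide, a contradiction. Therefore $R=R_{i_{1}}\oplus\cdots\oplus R_{i_{q}}$, and case (a) applied to $\rR'$ gives $q=n^{2}$ and a group grading. This is (b.2).

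\textbf{Row count.} By Corollary \ref{always.minimal}, the decomposition $\rR'$ is minimal, so the $q=n^{2}$ rows indexed by $i_{1},\ldots,i_{q}$ are pairwise distinct. Every other index $j$ has $R_{j}=R_{i_{k}}$ for some $k$, hence $v_{j}$ is a scalar multiple of $v_{i_{k}}$ and its row equals row $i_{k}$. Hence $\#\{\mathcal P_{1},\ldots,\mathcal P_{l}\}=n^{2}=\dim R$. In case (a) the same holds directly from minimality.

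I expect the main obstacle to be the independence step in case (b). There one must make sure the left-multiplication argument really yields the equality of whole columns, possibly after passing from columns to rows via $\theta(i,j)=\theta(j,i)^{-1}$, and that centrality of $v_{i_j}v_{i_d}^{-1}$ uses every $v_k$, which spans $R$.
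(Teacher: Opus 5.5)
Your proposal is correct and follows essentially the paper's own argument. You apply Lemma~\ref{weaky.hipothesis} to get one-dimensional components spanned by invertible $v_i$, handle the direct case through Theorem~\ref{realization.matrices}, and in the non-direct case use the commutation computation and the centrality of $v_{i_j}v_{i_d}^{-1}$ to show that dependent lines coincide, before counting distinct rows. Your explicit choice of a minimal index $j$, which guarantees that the earlier $v_{i_d}$ are linearly independent when you compare coefficients, makes precise a step the paper passes over with ``without loss of generality.''
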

\begin{proof}
    Suppose $\rR$ is a direct sum. Then, by Lemma \ref{weaky.hipothesis}, $R = R_{1} \oplus \cdots \oplus R_{l}$ is a regular quantum commutative decomposition of $R$, with $l= n^{2}$, and by Theorem \ref{realization.matrices}, $\mathscr{A}$ is a regular group-grading by a finite abelian group, which proves (a).
    
     Let us proof (b). By Lemma \ref{weaky.hipothesis}, we know that for all $1\leq i\leq l$, there exists an invertible element $v_{i}\in R_{i}$ such that $R_{i}=\operatorname{Span}_{K}\{v_{i}\}$.  Since $\rR$ is not a direct sum, there exists $2\leq t\leq l$ such that $v_{t}\in R_{t}\cap (R_{1}+\cdots +R_{t-1})$, and we can write $v_{t}=\gamma_{1}v_{1}+\cdots+\gamma_{t-1} v_{t-1}$. Without loss of generality we assume $\gamma_{1}\neq 0$. Now, for any $1\leq i\leq l$, we get
\begin{align*}
    v_{i}v_{t} &=v_{i}(\gamma_{1}v_{1})+\cdots+v_{i}(\gamma_{t-1}v_{t-1})\\
    &=\theta(i,1)(\gamma_{1}v_{1}v_{i})+\cdots +\theta(i,t-1)(\gamma_{t-1}v_{t-1}v_{i}).
\end{align*}
On the other hand,
\begin{align*}
   v_{i}v_{t} &=\theta(i,t)v_{t}v_{i}\\
   &= \theta(i,t)(\gamma_{1}v_{1}v_{i})+\cdots +\theta(i,t)(\gamma_{t-1}v_{t-1}v_{i}).
\end{align*}
Since $v_{t}$ is invertible we obtain $\theta(i,t)=\theta(i,1)$. Hence, we have shown that
\[
\theta(i,t)=\theta(i,1),\quad \text{for all}\quad 1\leq  i\leq l.
\]
Thus, it can be seen that $v_{t}v_{1}^{-1}\in Z(R)$, and it follows that $v_{t}=\lambda v_{1}$, for some $\lambda\in K^{\ast}$, i.e $R_{t}=R_{1}$. Therefore, identifying equal components, we conclude that there exists a subset $\{i_{1},\ldots, i_{q}\}\subseteq \{1,\ldots, l\}$ such that for any $j\in \{1,\ldots,l\}\setminus\{i_{1},\ldots, i_{q}\}$ there exists $1\leq k\leq q$ with $R_{j}=R_{i_{k}}$, which proves (b.1). Item (b.2) follows directly from item (a). Finally, consider the following relation equivalence in $\{1,\ldots,l\}$:
\[
i_{1}\sim i_{2}\quad \text{if and only if}\quad \theta(k,i_{1})=\theta(k,i_{2}),\quad \text{for all $1\leq k\leq l$}.
\]
 Then, if $\operatorname{\textbf{cl}}(i)$ denotes the equivalence class of $i$ with respect to $\sim$, there exists $q\leq l$ such that $\{1,\ldots,l\}=\operatorname{\textbf{cl}}(i_{1})\cup \cdots \cup \operatorname{\textbf{cl}}(i_{q})$ (disjoint union). Given $1\leq j\leq q$ and $t\in \operatorname{\textbf{cl}}(i_{j})$ we have $\mathcal{P}_{t}=\mathcal{P}_{i_{j}}$ and by the above calculations we have $R_{t}=R_{i_{j}}$. Thus by item (b.2)
 \[
 \dim R=q=\#\{\mathcal{P}_{1},\ldots, \mathcal{P}_{l}\}.
 \]
 
\end{proof}

\begin{Remark}
    
    \label{redution.}
         The minimality assumption cannot be removed from the hypothesis of Lemma \ref{simple.component}. As an illustration, in Example \ref{Example.length} we have a regular quantum commutative decomposition of length $6$; however, the length of the induced regular quantum commutative decomposition on $M_{2}(K)$ is $4$, and on the simple components isomorphic to $K$ it is exactly $1$.
\end{Remark}

\begin{Lemma}
\label{grading}
Let $R=(\rR,\theta)$ be a finite dimensional regular quantum commutative algebra with 
$R=B_{1}\oplus B_{2}$ (a direct sum of ideals), where $B_{1}$ and $B_{2}$ are the simple components of $R$. Suppose that $\mathscr{R}$ is a set-grading and $\mathscr{R}$ is minimal. Then, $\mathscr{R}$ is a group grading. 
 \end{Lemma}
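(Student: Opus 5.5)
The plan is to transport the group structure that already exists on one simple component back to $R$, using the set-grading hypothesis to pin down the multiplication map $f$. Write $1_{R}=e_{1}+e_{2}$ with $e_{t}$ the central idempotent of $B_{t}$. Since $R$ is semisimple, $J(R)=\{0\}$, so the Wedderburn--Malcev decomposition is simply $R=B_{1}\oplus B_{2}$. Because $\rR$ is minimal, Lemma \ref{simple.component} applies. It gives an index $\ell\in\{1,2\}$, say $\ell=1$ after relabeling, such that $B_{1}=C_{1}\oplus\cdots\oplus C_{m}$ with $C_{i}=R_{i}e_{1}$. This is a regular quantum commutative decomposition with the same function $\theta$, and each $C_{i}=\operatorname{Span}_{K}\{u_{i}\}$ with $u_{i}$ invertible in $B_{1}$ (Lemma \ref{weaky.hipothesis}).

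Since $B_{1}\cong M_{n_{1}}(K)$, Theorem \ref{realization.matrices} together with Corollary \ref{always.minimal} shows that this decomposition of $B_{1}$ is a $G$-grading by a finite abelian group $G=(S,\star)$, where $S=\{1,\ldots,m\}$. Concretely, $C_{i}C_{j}=C_{i\star j}$ and $\theta$ is a bicharacter of $G$. The projection $\pi_{1}\colon R\to B_{1}$, $x\mapsto xe_{1}$, is an algebra homomorphism satisfying $\pi_{1}(R_{i})=C_{i}\neq\{0\}$.

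The key step is to identify the set-grading map $f$ with $\star$. Fix $i,j\in S$. Then $\pi_{1}(R_{i}R_{j})=C_{i}C_{j}\ni u_{i}u_{j}\neq 0$, because $u_{i}$ and $u_{j}$ are invertible in $B_{1}$. Hence $R_{i}R_{j}\neq\{0\}$, and by the set-grading hypothesis there is $f(i,j)\in S$ with $R_{i}R_{j}\subseteq R_{f(i,j)}$. Applying $\pi_{1}$ gives
\[
0\neq u_{i}u_{j}\in C_{i\star j}\cap \pi_{1}(R_{f(i,j)})=C_{i\star j}\cap C_{f(i,j)}.
\]
Since the sum $C_{1}\oplus\cdots\oplus C_{m}$ is direct, this forces $f(i,j)=i\star j$. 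Therefore $R_{i}R_{j}\subseteq R_{i\star j}$ for all $i,j\in S$. Identifying each index $i$ with the corresponding element of $G$, conditions (1)--(3) of Definition \ref{set_grading} hold: $S=G$, every component is indexed by a group element, and $f$ is the group product. So $\rR$ is realized as a $G$-grading, and by Corollary \ref{twisted.group}-type reasoning it is a regular grading with bicharacter $\theta$.

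I expect the main obstacle to be justifying the application of Lemma \ref{simple.component}. One must check that the $C_{i}=R_{i}e_{1}$ produced there are exactly the projections of the components of $\rR$, and that they form a direct sum indexed by all of $S$; this is where minimality of $\rR$ is essential. The rest of the argument is then a formal consequence of the invertibility of the $u_{i}$ and the directness of the decomposition of $B_{1}$. Note that the assumption of exactly two simple components is not really used beyond $J(R)=\{0\}$, so the same argument should extend to any semisimple $R$.
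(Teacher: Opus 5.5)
Your proposal is correct and follows the paper's proof. Both use Lemma \ref{simple.component} with $J(R)=\{0\}$ to realize $B_{1}=\bigoplus_{i}R_{i}e_{1}$ as a $G$-grading, and then use centrality of $e_{1}$ and directness of that decomposition to force $f(i,j)=i\star j$. The paper goes on to check that the indices of the components of $B_{2}$ form a subgroup $H$ of $G$ with the same neutral element; as you observe, this step is redundant once $f=\star$ on all of $S\times S$, so only $J(R)=\{0\}$ is used and the argument extends directly to Theorem \ref{Set.grading.semissimple}.
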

\begin{proof} Let $e_{1}$ and $e_{2}$ be the orthogonal idempotents associated to the decomposition $R=B_{1}\oplus B_{2}$ and $\mathscr{R}\colon R=R_{1}\oplus \cdots \oplus R_{m}$ be the regular quantum commutative decomposition of $R$. By hypothesis, given $1\leq i,j\leq m$, there exists $f(i,j)\in \{1,\ldots, m\}$ such that $R_{i}R_{j}\subseteq R_{f(i,j)}$. We observe that, since $e_{1}$, $e_{2}$ are central in $R$, it follows that for any $1\leq i,j\leq m$ and $k\in \{1,2\}$ 
\begin{equation}
\label{eq.continuity}
    (R_{i}e_{k})(R_{j}e_{k})\subseteq R_{f(i,j)}e_{k}.
\end{equation}

By Lemma \ref{result.m.dim(R)}, we can assume $\mathscr{B}_{1}\colon B_{1}=R_{1}e_{1}\oplus \cdots \oplus R_{m}e_{1}$ is the regular quantum commutative decomposition of $B_{1}$. Thus, by Theorem \ref{group.grading.matrices.algebra}, there exists $\star\colon S\times S\rightarrow S$ be a binary operation on $S=\{1,\ldots,m\}$ such that $G=(S,\star)$ is a finite abelian group and $\mathscr{B}_{1}$ is a $G$-regular grading and we can assume $s_{0}$ is the neutral element of $G$.  We need to show that $\star$ coincides with $f$. 

Indeed, suppose there exist 
$i$, $j\in S$ such that $i\star j\neq f(i,j)$.  Take $0\neq x\in R_{i}e_{1}$ and $0\neq y\in R_{j}e_{1}$ such that $xy\neq 0$. On one hand, we have $xy\in R_{i\star j}e_{1}$; on the other hand, by  (\ref{eq.continuity}), it follows that $xy\in R_{f(i,j)}e_{1}$.  Thus, $0\neq xy\in R_{f(i,j)}e_{1}\cap R_{i\star j}e_{1}$, which is a contradiction since $R_{f(i,j)}e_{1}$ is a direct summand of $B_{1}$. Therefore, $f= \star$ and $\mathscr{B}_{1}$ is a group-grading with respect to the finite abelian group $G=(S,f)$.

Now, let $i_{1}$,\dots, $i_{q}\in \{1,\ldots,m\}$ such that $\mathscr{B}_{2}\colon B_{2}=R_{i_{1}}e_{2}\oplus \cdots \oplus R_{i_{q}}e_{2}$. Then, by Theorem \ref{group.grading.matrices.algebra}, if $T=\{i_{1},\ldots, i_{q}\}$, then there exists $\circ\colon T\times T\rightarrow T$ such that $H=(T,\circ)$ is a finite abelian group and $\mathscr{B}_{2}$ is a regular grading of quantum length $q$. Since $\mathscr{B}_{2}$ is a regular grading with respect to $H$, by (\ref{eq.continuity}) we have $f(T,T)\subseteq T$ and $\circ=f\mid_{ T\times T}$. Assume $i_{1}$ is the neutral element of $H$.  Then, $i_{1}\circ i_{2}=i_{2}$, i.e, $f(i_{1},i_{2})=i_{2}$; since $T\subseteq S$, it follows that $i_{1}\star i_{2}=i_{2}$. On the other hand, $s_{0}\star i_{2}=i_{2}$ which implies $s_{0}=i_{1}$. Consequently, $H$ is a subgroup of $G$ and $\mathscr{R}$ is a $G$-grading. 
\end{proof}

By applying induction to the result of Lemma \ref{grading}, together with Theorem \ref{realization.matrices}, we obtain the following.

\begin{Theorem}
	\label{Set.grading.semissimple}
	 Let $R$ be a finite-dimensional semisimple algebra with a set grading $\mathscr{R}\colon R=R_{1}\oplus \cdots \oplus R_{m}$. Suppose $\mathscr{R}$ satisfies the following conditions:
\begin{enumerate}
    \item[(i)] $\mathscr{R}$ is a regular quantum commutative decomposition of $R$.
    \item[(ii)] $\mathscr{R}$ is minimal.
\end{enumerate}
Then, $\mathscr{R}$ is a group-grading by a finite abelian group $G$. Moreover, $m=\dim B_{\ell}$, where $B_{\ell}$ is a simple component of $R$ and $G\cong \mathbb{Z}_{n_{1}}^{2}\times \cdots \times \mathbb{Z}_{n_{r}}^{2}$, with $n_{1}\cdots n_{r}=\sqrt{\dim B_{\ell}}$.
    
\end{Theorem}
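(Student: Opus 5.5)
We argue by induction on the number $k$ of simple components in $R=B_{1}\oplus\cdots\oplus B_{k}$, with central orthogonal idempotents $e_{1},\dots,e_{k}$. Since $R$ is semisimple, $J(R)=0$ and the Wedderburn--Malcev decomposition is just $R=B$. We first apply Lemma \ref{simple.component} to the minimal decomposition $\mathscr{R}$. It gives an index $\ell$ such that $B_{\ell}=R_{1}e_{\ell}\oplus\cdots\oplus R_{m}e_{\ell}$ is a regular quantum commutative decomposition of quantum length $m$ with the same matrix $M^{R}$. By Theorem \ref{realization.matrices}, this decomposition of $B_{\ell}$ is a regular $G$-grading for a finite abelian group $G=(S,\star)$, where $S=\{1,\dots,m\}$ and $m=\dim B_{\ell}$. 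The same theorem gives $G\cong\mathbb{Z}_{n_{1}}^{2}\times\cdots\times\mathbb{Z}_{n_{r}}^{2}$ with $n_{1}\cdots n_{r}=\sqrt{\dim B_{\ell}}$. This already yields the ``moreover'' part, so what remains is to show that the set-grading function $f$ of $\mathscr{R}$ coincides with $\star$.

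Next we compare $f$ with $\star$ on $B_{\ell}$, exactly as in Lemma \ref{grading}. Because $e_{\ell}$ is central, $(R_{i}e_{\ell})(R_{j}e_{\ell})\subseteq R_{f(i,j)}e_{\ell}$. Each $R_{i}e_{\ell}$ is spanned by an invertible element of $B_{\ell}$, so every product $(R_{i}e_{\ell})(R_{j}e_{\ell})$ is nonzero. That product lies in $R_{i\star j}e_{\ell}\cap R_{f(i,j)}e_{\ell}$, and since the sum $\bigoplus_{s} R_{s}e_{\ell}$ is direct, we get $f(i,j)=i\star j$ for all $i,j$. In particular $f$ is defined on all of $S\times S$, so $R_{i}R_{j}\subseteq R_{i\star j}$ always holds. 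This is precisely condition (3) of Definition \ref{set_grading}, and $\mathscr{R}$ is then a $G$-grading.

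To double-check compatibility with the other components, we use the first half of Proposition \ref{weaky.sum.simple}, which for each $t$ gives $B_{t}=\sum_{s\in T_{t}}R_{s}e_{t}$ over some $T_{t}\subseteq S$. By Theorem \ref{group.grading.matrices.algebra}(b), after discarding repetitions this is a group grading by $(T_{t},\circ_{t})$, and $\circ_{t}$ is the restriction of $f=\star$. As in Lemma \ref{grading}, the neutral element $i_{1}$ of $T_{t}$ satisfies $i_{1}\star i_{2}=i_{2}$ in $G$, which forces $i_{1}=s_{0}$, the neutral element of $G$. Hence each $T_{t}$ is a subgroup of $G$, and the grading is consistent on every component. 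One could alternatively phrase this as induction on $k$ by merging $B_{t}$ into $B_{\ell}$ one component at a time, with Lemma \ref{grading} as the base case.

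The main obstacle is the step ``$f=\star$ on $B_{\ell}$''. It needs every product $R_{i}R_{j}$ to be nonzero, which we obtain from the invertibility in $B_{\ell}$, so that $f$ is total. It also needs the projections $R_{s}\to R_{s}e_{\ell}$ to be injective onto a direct decomposition, which is where minimality enters through Lemma \ref{simple.component}. Example \ref{Example.length} shows this fails without minimality: there the length-$m$ component does not exist and the set grading is not a group grading. So the write-up should make the appeal to Lemma \ref{simple.component} explicit rather than only citing ``induction on Lemma \ref{grading}''.
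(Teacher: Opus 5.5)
Your proof is correct. It uses the same ingredients as the paper, but organizes them differently:
\begin{itemize}
\item Lemma \ref{simple.component}, to produce $B_{\ell}=R_{1}e_{\ell}\oplus\cdots\oplus R_{m}e_{\ell}$;
\item Theorem \ref{realization.matrices}, to obtain $G=(S,\star)$ together with the ``moreover'' part;
\item the comparison $f=\star$ from the proof of Lemma \ref{grading}.
\end{itemize}

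The paper proves the two-component case in Lemma \ref{grading}. After establishing $f=\star$ on $B_{1}$, it goes on to check that the support of $B_{2}$ is a subgroup of $G$. It then asserts the theorem by an unspecified induction on the number of components.

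You observe instead that the comparison on the single component $B_{\ell}$ already gives $f(i,j)=i\star j$ for every $(i,j)\in S\times S$. Since $(S,\star)$ is a group, this is verbatim the statement that $\mathscr{R}$ is a $G$-grading in the sense of Definition \ref{set_grading}. So neither an induction nor an inspection of the other simple components is needed.

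What each route buys:
\begin{itemize}
\item Your route is a complete one-step argument for any number of simple components. The paper leaves the inductive step implicit, and Lemma \ref{grading} is stated only for exactly two simple components.
\item The paper's route additionally records how $\mathscr{R}$ restricts to the remaining components. The theorem does not need this.
\end{itemize}
Consequently your third paragraph can be deleted. It is also slightly loose as written, since for $t\neq\ell$ some of the spaces $R_{s}e_{t}$ may be zero or may coincide for different $s$.

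Two small corrections to your closing paragraph.
\begin{enumerate}
\item Totality of $f$ does not need invertibility. Condition (a) of Definition \ref{dQCD} with $n=2$ already gives $R_{i}R_{j}\neq\{0\}$ for all $i,j$.
\item If you mean that the linear maps $R_{s}\to R_{s}e_{\ell}$ must be injective, that is neither used nor true in general. Take $M_{2}(K)\oplus M_{2}(K)$ with $R_{g}=\{(aX_{g},bX_{g})\mid a,b\in K\}$, where $X_{(i,j)}=D^{i}N^{j}$ and $D,N$ are as in Example \ref{Example.non.set}. This is a minimal regular quantum commutative group grading, yet each $R_{g}$ is two-dimensional. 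Your argument only uses that $\bigoplus_{s}R_{s}e_{\ell}$ is a direct sum with all summands nonzero, which is exactly what minimality supplies through Lemma \ref{simple.component}.
\end{enumerate}
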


\section{A Necessary condition for the existence of a regular quantum commutative decomposition}\label{s7}

\begin{Proposition} 
\label{necessary.condition}
Let $R=(\rR,\theta)$ be a finite dimensional regular quantum commutative algebra. Consider its Wedderburn--Malcev decomposition, $R = B +J(R)$, where $B = B_{1} \oplus \cdots \oplus B_{k}$ is the direct sum (as algebras) of simple subalgebras of $R$. If $m$ is the quantum length of $\mathscr{R}$, and $q_{i}:=\sqrt{\dim B_{i}}$, $1\leq i\leq k $, then
\begin{enumerate}
    \item[(i)] There exists $1\leq \ell\leq k$, such that
\[
q_{s}\leq q_{\ell}\leq \sqrt{m},\quad \text{for all}\quad 1\leq s\leq k.
\]    
\item[(ii)] For any $1\leq s\leq k$ such that $1<q_{s}<q_{\ell}$, we have $\operatorname{gcd}(q_{s},q_{\ell})>1$. 
\end{enumerate}
\end{Proposition}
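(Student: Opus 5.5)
The plan is to take for $\ell$ the distinguished simple component supplied by the ``moreover'' part of Proposition~\ref{weaky.sum.simple}, namely the one on which all $m$ indices occur. Then (i) and (ii) should follow from counting distinct rows of $M^{R}$ and from the orders of the roots of unity $\theta(i,j)$.

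\emph{Step 1 (dimension as a row count).} Fix $1\leq s\leq k$. By Proposition~\ref{weaky.sum.simple}, $B_{s}=D_{i_{1}}+\cdots+D_{i_{t}}$ is a (not necessarily direct) sum of one-dimensional subspaces spanned by invertible $u_{r}$ with $u_{r}u_{r'}=\theta(i_{r},i_{r'})u_{r'}u_{r}$. Since $B_{s}\cong M_{q_{s}}(K)$ and the $\theta$'s are roots of unity, Theorem~\ref{group.grading.matrices.algebra} applies to $B_{s}$. Hence $q_{s}^{2}$ equals the number of distinct rows of the submatrix $(\theta(i_{r},i_{r'}))_{1\le r,r'\le t}$ of $M^{R}$.

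\emph{Step 2 (proof of (i)).} For $\ell$ we have $B_{\ell}=C_{1}+\cdots+C_{m}$, so by Step~1, $q_{\ell}^{2}$ equals the number of distinct rows of the full matrix $M^{R}$. In particular $q_{\ell}^{2}\leq m$. For any $s$, two indices whose full rows of $M^{R}$ coincide also have coinciding rows in the submatrix indexed by $\{i_{1},\dots,i_{t}\}$. Therefore the number of distinct restricted rows is at most the number of distinct full rows, which gives $q_{s}^{2}\leq q_{\ell}^{2}$.

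\emph{Step 3 (proof of (ii)).} In $B_{\ell}$ each $u_{i}$ satisfies $u_{i}^{q_{\ell}}\in K^{\ast}1_{B_{\ell}}$. Comparing $u_{i}^{q_{\ell}}u_{j}=\theta(i,j)^{q_{\ell}}u_{j}u_{i}^{q_{\ell}}$ with the invertibility of $u_{j}u_{i}^{q_{\ell}}$ gives $\theta(i,j)^{q_{\ell}}=1$ for all $1\leq i,j\leq m$.

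Now take $s$ with $1<q_{s}<q_{\ell}$. By Theorem~\ref{group.grading.matrices.algebra} and Theorem~\ref{realization.matrices}, after identifying equal components, $B_{s}$ carries a regular grading. This grading is by a group $G_{s}\cong\mathbb{Z}_{n_{1}}^{2}\times\cdots\times\mathbb{Z}_{n_{r}}^{2}$ with $n_{1}\cdots n_{r}=q_{s}$, and its commutation bicharacter is the restriction of $\theta$. The grading is minimal by Corollary~\ref{always.minimal}, so this bicharacter is non-degenerate.

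Choose a prime $p$ dividing $q_{s}$, so that $p\mid n_{j}$ for some $j$. Then $G_{s}$ has an element $g$ of order $p$. By non-degeneracy there is $h$ with $\theta(g,h)\neq1$, and $\theta(g,h)^{p}=\theta(g^{p},h)=1$, so $\theta(g,h)$ is a primitive $p$-th root of unity. Since $\theta(g,h)^{q_{\ell}}=1$, we get $p\mid q_{\ell}$, hence $\gcd(q_{s},q_{\ell})>1$.

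The main obstacle is Step~1 for $s\neq\ell$. We must check that the non-direct decomposition of $B_{s}$ from Proposition~\ref{weaky.sum.simple} really meets the hypotheses of Theorem~\ref{group.grading.matrices.algebra}, in particular that the relevant commutation values are those restricted from $M^{R}$. We also need that the row count uses the rows of the submatrix only, which is what the final assertion of that theorem states.
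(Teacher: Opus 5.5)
Your proposal is correct. Part (i) is argued exactly as in the paper: you take $\ell$ from the last assertion of Proposition~\ref{weaky.sum.simple}, and you read off $q_\ell^2$ and $q_s^2$ as numbers of distinct rows via the final assertion of Theorem~\ref{group.grading.matrices.algebra}. You then note that indices with equal rows of $M^{R}$ also have equal rows in the submatrix. Your observation that $\theta(i,j)^{q_\ell}=1$ for all $i,j$ also matches the paper.

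For (ii) you take a different route. The paper's argument is elementary. Since $\widetilde{u}_j^{\,q_s}\in K^\ast 1_{B_s}$ and $u_i^{q_\ell}\in K^\ast 1_{B_\ell}$, every value $\theta(i_j,i_l)$ occurring in $B_s$ is both a $q_s$-th and a $q_\ell$-th root of unity, hence a $\gcd(q_s,q_\ell)$-th root of unity. These values cannot all equal $1$, because $B_s\cong M_{q_s}(K)$ with $q_s>1$ is noncommutative and is spanned by the pairwise $\theta$-commuting $\widetilde{u}_j$; the paper leaves this step implicit.

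You instead use the group structure $G_s\cong\mathbb{Z}_{n_1}^2\times\cdots\times\mathbb{Z}_{n_r}^2$ from Theorem~\ref{realization.matrices} together with the non-degeneracy from Corollary~\ref{always.minimal}. This gives, for every prime $p\mid q_s$, a commutation value of exact order $p$. Your route needs more machinery, ultimately the classification of division gradings behind Theorem~\ref{realization.matrices}. In exchange it proves more: every prime divisor of $q_s$ divides $q_\ell$, and the hypothesis $q_s<q_\ell$ is never used.
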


	\begin{proof} Denote by $e_{1}$,\dots, $e_{k}$ the orthogonal idempotents associated with the decomposition $B_{1}\oplus \cdots \oplus B_{k}$.  By Proposition \ref{weaky.sum.simple}, there exists $1\leq \ell\leq k$ such that $B_{\ell}$ admits a decomposition of the form 
    \[
    B_{\ell}=C_{1}+\cdots+C_{m}
    \]
    with nonzero vector spaces $C_{1}$,\dots, $C_{m}$ satisfying: For all $1\leq i,j\leq m$
    \begin{enumerate}
        \item[(1)] $C_{i}=\operatorname{Span}_{K}\{u_{i}\}$, where $u_{i}^{q_{\ell}}=\mu_{i}1_{B_{\ell}}$, for some $\mu_{i}\in K^{\ast}$.
        \item[(2)] $u_{i}u_{j}=\theta(i,j)u_{j}u_{i}$.
    \end{enumerate}
     In particular, all $\theta(i,j)$ are $q_{\ell}$-th roots of unity, and by Theorem \ref{group.grading.matrices.algebra} we have $q_{\ell}\leq \sqrt{m}$.
     
     Now, take $1\leq s\leq k$ with $s\neq \ell$.  Again by Proposition \ref{weaky.sum.simple}, there exists $t\leq m$, indices $i_{1}$,\ldots, $i_{t}\in \{1,\ldots, m\}$ and nonzero vector subspaces $D_{i_{1}}$,\ldots, $D_{i_{t}}$ such that  
		\[
		B_{s}=D_{i_{1}}+\cdots+D_{i_{t}}
		\]
        and
        \[
        d_{i_{j}}d_{i_{l}}=\theta(i_{j},i_{l})d_{i_{l}}d_{i_{j}}
        \]
         for all $1\leq j,l\leq t$ and all $d_{i_{j}}\in D_{i_{j}}$, $d_{i_{l}}\in D_{i_{l}}$. 
         
         Now, consider the matrices $\mathcal{P}:=(\theta(i,j))_{1\leq i,j\leq m}$ and $\mathcal{L}:=(\theta_{i_{j},i_{l}})_{1\leq j,l\leq t}$.  By Theorem \ref{group.grading.matrices.algebra}, if $\mathcal{P}_{1}$,\ldots, $\mathcal{P}_{m}$ and $\mathcal{L}_{1}$,\ldots, $\mathcal{L}_{t}$ denotes the rows of $\mathcal{P}$ and $\mathcal{L}$ respectively, we have
        \begin{equation}
        \label{equal.rows.1}
        q_{\ell}^{2}=\#\{\mathcal{P}_{1},\ldots, \mathcal{P}_{m}\} ,\quad \text{and}\quad q_{s}^{2}= \#\{\mathcal{L}_{1},\ldots, \mathcal{L}_{t}\}.
    \end{equation}
    Notice the following: Given $\mathcal{L}_{j}=(\theta(i_{j},i_{l}))_{1\leq l\leq t}$ and  $\mathcal{L}_{k}=(\theta(i_{k},i_{l}))_{1\leq l\leq t}$, if $\mathcal{P}_{i_{j}}=\mathcal{P}_{i_{k}}$, then we have 
    \[
    \mathcal{L}_{j}=\mathcal{L}_{k}.
    \]
    Hence, we get
     \begin{equation}
      \label{eq.equal.rows}
   \#\{\mathcal{L}_{1},\ldots, \mathcal{L}_{t}\}\leq \#\{\mathcal{P}_{1},\ldots, \mathcal{P}_{m}\}.
    \end{equation}
    i.e, $q_{s}\leq q_{\ell}$. The equality holds   if $t=m$, because in this case $\mathcal{P}=\mathcal{L}$. We have shown that
    \[
    q_{s}\leq q_{\ell}\leq \sqrt{m},
    \]
     which proves (i).

    Now, suppose $1<q_{s}<q_{\ell}$. By Lemma \ref{weaky.hipothesis}, for any $1\leq j\leq t$, $D_{i_{j}}=\operatorname{Span}_{K}\{\widetilde{u}_{j}\}$  where $\widetilde{u}_{j}$ satisfies $\widetilde{u}_{j}^{q_{s}}=\lambda_{j}1_{B_{s}}$, for some $\lambda_{j}\in K^{\ast}$. In particular,  all $\theta(i_{j},i_{\ell})$ are $q_{s}$-th roots of unity. Since we already know that all  $\theta(i_{j},i_{l})$ are $q_{\ell}$-th roots of unity, we conclude $\operatorname{gcd}(q_{s},q_{\ell})>1$. 
	\end{proof}   
\begin{Remark}
	Consider the algebra $R:=M_{4}(K)\oplus M_{6}(K)$. If it admitted a regular quantum decomposition, then, without loss of generality, we would be able to assume that it is minimal, and thus, via projection, we would obtain regular gradings on $M_{4}(K)$ and on $M_{6}(K)$ in which the values of the bicharacter of the decomposition of $M_{4}(K)$ are obtained from the values of the bicharacter of the decomposition of $M_{6}(K)$. However, using the classification of bicharacters (see \cite[Remark 2.16]{Kochetov.book}), one can show that it is impossible to obtain all the values of a bicharacter of $M_{4}(K)$ from those of a bicharacter of $M_{6}(K)$. We leave the details of the proof to the reader. Thus, the result concerning the greatest common divisor in Proposition \ref{necessary.condition}, although necessary, is not sufficient to guarantee the existence of a regular quantum decomposition in an algebra.
\end{Remark}

\begin{Corollary} Let $R$ be an algebra with two simple components $B$ and $C$ such that $B\cong M_{q}(K)$ and $C\cong M_{p}(K)$, with $\operatorname{gcd}(p,q)=1$. Then $R$ does not admit a regular quantum commutative decomposition.
\end{Corollary}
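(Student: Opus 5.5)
The plan is to argue by contradiction, using Proposition \ref{necessary.condition}. Suppose $R$ admits a regular quantum commutative decomposition $\mathscr{R}$ of quantum length $m$. Take the Wedderburn--Malcev decomposition $R=B+J(R)$ with $B=B_{1}\oplus B_{2}$, where $B_{1}\cong M_{q}(K)$ and $B_{2}\cong M_{p}(K)$. Then $q_{1}=q$ and $q_{2}=p$ in the notation of that proposition. Throughout I read the corollary as concerning genuinely noncommutative components, i.e.\ $p,q\geq 2$. This restriction is needed: if one component is $K$, then $\operatorname{gcd}(p,q)=1$ holds automatically, yet Example \ref{Example.length} shows that $K\oplus K\oplus M_{2}(K)$ does admit such a decomposition. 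I would state this explicitly at the start of the proof.

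With $p,q\geq 2$ and $\operatorname{gcd}(p,q)=1$ we have $p\neq q$. By part (i) of Proposition \ref{necessary.condition}, there is an index $\ell$ with $q_{s}\leq q_{\ell}$ for $s=1,2$, so $q_{\ell}=\max\{p,q\}$. Without loss of generality $q_{\ell}=q$. The other component then has $1<q_{s}=p<q=q_{\ell}$. Part (ii) of the same proposition now gives $\operatorname{gcd}(p,q)>1$, contradicting the hypothesis, so no regular quantum commutative decomposition exists.

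If I wanted to make the mechanism transparent rather than only citing (ii), I would rerun its core argument directly. By Proposition \ref{weaky.sum.simple}, all values $\theta(i,j)$ are $q$-th roots of unity, because they arise from the decomposition of the distinguished component $B_{\ell}\cong M_{q}(K)$. The component $M_{p}(K)$ has a decomposition $D_{i_{1}}+\cdots+D_{i_{t}}$ into spans of invertible elements $\widetilde u_{j}$ with $\widetilde u_{j}^{p}$ scalar. Hence the values $\theta(i_{j},i_{l})$ restricted to this component are $p$-th roots of unity. Since $\operatorname{gcd}(p,q)=1$, these values are all equal to $1$, so $M_{p}(K)$ would be spanned by pairwise commuting elements and would be commutative, which is impossible for $p\geq 2$.

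The main obstacle is not the deduction itself but the boundary case just discussed, namely ensuring the hypothesis $p,q>1$ that part (ii) requires.
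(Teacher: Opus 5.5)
Your proof is correct and is the intended deduction: the paper gives no separate proof and treats this corollary as an immediate consequence of Proposition \ref{necessary.condition}, where part (i) forces $q_\ell=\max\{p,q\}$ and part (ii) then gives $\operatorname{gcd}(p,q)>1$. Your caveat that $p,q\geq 2$ must be assumed is right and the paper omits it. A cleaner two-component counterexample is Proposition \ref{divisors} with $n_{1}=1$, which puts a regular grading on $K\oplus M_{q}(K)$; Example \ref{Example.length} is less apt because it has three simple components.
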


Assuming $\operatorname{char}(K)=0$, we know that if two algebras admit regular quantum commutative decompositions with the same quantum length and same quantum decomposition matrix, then their $T$-ideals coincide (see \cite[Theorem 3.1]{regevz2}). The above corollary shows that the equality of $T$-ideals is not sufficient to guarantee that one of the algebras admits a regular regular quantum commutative decomposition (in particular, it does not even admit a regular grading). For instance, take $R:=M_{q}(K)\oplus M_{p}(K)$, with $\operatorname{gcd}(p,q)=1$ and $q>p$. Then $T(R)=T(M_{p}(K))$, but $R$ does not admit a regular quantum commutative decomposition.

The next example shows that there exist algebras which do not admit a regular quantum commutative decomposition, but do admit a regular grading by a non-abelian group. 

\begin{Example} Suppose $\operatorname{char}(K)=0$. Consider the symmetric group $S_{5}$ and the trivial cocycle $\alpha\equiv 1$. Let $R:=K^{\alpha}S_{5}\cong KS_{5}$. It is well known that the degrees of the irreducible representations of $S_{5}$ in the non-decreasing order are: $1$,$1$,$4$,$4$,$5$,$5$,$6$. Thus, $KS_{5}\cong K^{\oplus 2}\oplus M_{4}(K)^{\oplus 2}\oplus M_{5}(K)^{\oplus 2}\oplus M_{6}(K)$. It follows that $A$ does not admit a regular regular quantum commutative  decomposition, but $A$ is a $S_{5}$-graded regular algebra with trivial bicharacter. 
\end{Example}

The next example presents an algebra that admits neither a regular quantum commutative structure nor a structure of a $G$-graded regular algebra with $G$ a finite non-abelian group.

\begin{Example} Suppose $\operatorname{char}(K)=0$. Consider $R:=M_{3}(K)\oplus M_{2}(K)$ which does not admit a regular quantum commutative decomposition. In particular, it does not have a structure of a regular grading by any finite abelian group. Suppose $R$ is a $G$-graded regular algebra where $G$ is a finite non-abelian group with $|G|\leq \dim R=13$. If $R\cong K^{\alpha}G$, $\alpha\in H^{2}(G,K^{\ast})$, then by \cite[Lemma 1.4.4]{Karpilovsky.vol.3}, $|G|=\dim M_{3}(K)+\dim M_{2}(K)=\dim R=13$, which is a contradiction because in this case, $G\cong \mathbb{Z}_{13}$. Thus, in light of Proposition \ref{Proposition.KG}, $K^{\alpha}G\cong M_{3}(K)$, for some $\alpha\in H^{2}(G,K^{\ast})$, which is again a contradiction because in this case, by \cite[Theorem 2.15]{Kochetov.book}, $G\cong \mathbb{Z}_{3}\times \mathbb{Z}_{3}$. We conclude that $R$ is not a $G$-graded regular algebra for any finite non-abelian group $G$.
\end{Example}

\begin{Proposition}
	\label{divisors}
		Let $n_{1}$, $n_{2}\in \mathbb{N}$ with $n_{1}\mid n_{2}$. Then $M_{n_{1}}(K)\oplus M_{n_{2}}(K)$ admits a regular grading by a finite abelian group.
\end{Proposition}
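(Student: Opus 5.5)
Write $n_{2}=dn_{1}$ and set $G=\mathbb{Z}_{n_{2}}\times\mathbb{Z}_{n_{2}}$. The plan is to build an explicit regular $G$-grading on $R=M_{n_{1}}(K)\oplus M_{n_{2}}(K)$ from Pauli (clock and shift) matrices, following the idea of Example \ref{Example.non.set}. As in that example, the characteristic of $K$ should be coprime to $n_{2}$, so that primitive roots of unity of order $n_{2}$ exist; if the standing hypotheses do not already guarantee this, we impose it.

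Let $\varepsilon$ be a primitive $n_{2}$-th root of unity. Let $P_{2},Q_{2}\in M_{n_{2}}(K)$ be the clock and shift matrices, so that $Q_{2}P_{2}=\varepsilon P_{2}Q_{2}$ and $P_{2}^{n_{2}}=Q_{2}^{n_{2}}=I$. Put $\eta=\varepsilon^{d}$, which is a primitive $n_{1}$-th root of unity, and let $P_{1},Q_{1}\in M_{n_{1}}(K)$ be the clock and shift matrices for $\eta$, so that $Q_{1}P_{1}=\eta P_{1}Q_{1}$ and $P_{1}^{n_{1}}=Q_{1}^{n_{1}}=I$. The key choice is to take generators of $R$ compatible with the reduction $\mathbb{Z}_{n_{2}}\to\mathbb{Z}_{n_{1}}$:
\[
X=(P_{1},P_{2}^{d}),\qquad Y=(Q_{1},Q_{2}).
\]
These satisfy $YX=\eta XY$ and $X^{n_{2}}=Y^{n_{2}}=1_{R}$. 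However, $X$ and $Y$ do not generate $R$, since $P_{2}^{d}$ and $Q_{2}$ generate only a proper subalgebra of $M_{n_{2}}(K)$ when $d>1$.

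Because of this, I would instead use a grading by $\mathbb{Z}_{n_{2}}^{2}$ whose homogeneous components are two-dimensional where this is needed, as in Example \ref{Example.non.set}. For $(i,j)\in\mathbb{Z}_{n_{2}}^{2}$ define
\[
R_{(i,j)}=\operatorname{Span}_{K}\{(0,P_{2}^{i}Q_{2}^{j})\}\ \text{ if } d\nmid i,\qquad R_{(i,j)}=\operatorname{Span}_{K}\{(P_{1}^{i/d}Q_{1}^{j},0),(0,P_{2}^{i}Q_{2}^{j})\}\ \text{ if } d\mid i .
\]
The exponent $j$ on $Q_{1}$ is read modulo $n_{1}$. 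The proof then runs through the following checks.

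\begin{enumerate}
\item \emph{Direct sum and grading.} Every element $P_{1}^{a}Q_{1}^{b}$ with $a,b$ taken mod $n_{1}$ is listed exactly once in a component with $d\mid i$, provided we restrict to $j<n_{1}$. Otherwise the same element would appear again for $j+n_{1}$. So we must refine the definition: put the $M_{n_{1}}$-part only into the components with $i=da$, $a<n_{1}$, and $j<n_{1}$. This mismatch is exactly where multiplicativity can fail. The remedy is to choose the index set for the $M_{n_{1}}$-part to be a subgroup $H\le G$ with $|H|=n_{1}^{2}$ and an isomorphism $H\cong\mathbb{Z}_{n_{1}}^{2}$, for instance $H=\langle(d,0),(0,d)\rangle$, and to use the $M_{n_{1}}$-generators $P_{1}^{a}Q_{1}^{b}$ in degree $(da,db)$. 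With this choice products respect degrees, since $H$ is a subgroup and the Pauli gradings are group gradings.
\item \emph{Commutation factor.} Quantum commutativity requires that the bicharacter of $G$, namely $\beta((i,j),(k,l))=\varepsilon^{jk-il}$, restricted to $H$ equals the Pauli bicharacter of $M_{n_{1}}$. On $H$ this restriction is $\varepsilon^{d^{2}(bc-ae)}=\eta^{d(bc-ae)}$, where $(da,db)$ and $(dc,de)$ are the two degrees. This agrees with the Pauli bicharacter only if $\eta^{d}$ is a primitive $n_{1}$-th root of unity, that is, only if $\gcd(d,n_{1})=1$. In general it does not agree.
\item \emph{General case.} For arbitrary $d$ I would replace $P_{1},Q_{1}$ by a clock–shift pair built with a primitive $n_{1}$-th root of unity $\eta'$ chosen so that $\eta'^{\,\cdot}$ matches $\varepsilon^{d^{2}\cdot}$ on $H$. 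If that is impossible, I would choose $H$ differently, for example $H=\langle(d,0),(0,1)\rangle$ modulo a suitable subgroup, or use a non-symmetric embedding. The right framework is \cite[Remark 2.16]{Kochetov.book}: one needs a subgroup $H\le\mathbb{Z}_{n_{2}}^{2}$ on which the standard nondegenerate alternating bicharacter restricts to a nondegenerate bicharacter of order $n_{1}$. Writing $\mathbb{Z}_{n_{2}}^{2}$ as an orthogonal sum of symplectic blocks, such an $H$ exists by taking $H=\langle(d,0),(0,1)\rangle\cap\ker$-adjustments. Finding this $H$ explicitly is the main obstacle.
\item \emph{Regularity condition (a).} Given any tuple of degrees, multiply the $M_{n_{2}}$-parts $(0,P_{2}^{i}Q_{2}^{j})$. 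These are invertible in $M_{n_{2}}(K)$, so the product is nonzero.
\end{enumerate}
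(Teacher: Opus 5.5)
Your proposal has a genuine gap, exactly where you say ``finding this $H$ explicitly is the main obstacle''. In general no such $H$ exists, so your claim that splitting $\mathbb{Z}_{n_2}^2$ into symplectic blocks produces one is false.

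Your step 2 does settle the case $\gcd(n_1,d)=1$. There $\varepsilon^{d^2}$ is a primitive $n_1$-th root of unity, and rebuilding $P_1,Q_1$ with it makes $H=\langle(d,0),(0,d)\rangle$ work. Now suppose a prime $p$ divides both $n_1$ and $d$, and let $p^a$ and $p^b$ ($a,b\ge 1$) be the exact powers of $p$ dividing $n_1$ and $d$. Any subgroup $H\le\mathbb{Z}_{n_2}^2$ of order $n_1^2$ with $\beta|_H$ nondegenerate has $p$-part of rank at most $2$ carrying a nondegenerate alternating form. So that $p$-part is $\cong\mathbb{Z}_{p^a}^2$, and hence equals the full $p^a$-torsion subgroup $p^b\mathbb{Z}_{p^{a+b}}^2$. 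On that subgroup $\beta(p^bx,p^by)=\beta(x,y)^{p^{2b}}$ has order dividing $p^{\max(a-b,0)}<p^a$, so $\beta|_H$ is degenerate. The smallest instance is $n_1=2$, $n_2=4$: the only subgroup of $\mathbb{Z}_4^2$ isomorphic to $\mathbb{Z}_2^2$ is $2\mathbb{Z}_4^2$, and $\beta\equiv 1$ there.

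Working ``modulo a suitable subgroup'' does not help. A quotient of $\langle(d,0),(0,1)\rangle$ is not a subgroup of the grading group, and spreading $P_1^aQ_1^b$ over a coset of the radical destroys directness. No other arrangement rescues $\mathbb{Z}_{n_2}^2$ either. Suppose the $M_{n_2}$-projection is the Pauli grading. The homogeneous components of the central idempotent $(I_{n_1},0)$ are central, and idempotency together with nondegeneracy of $\beta$ rule out nonzero degrees. So this idempotent is homogeneous of degree $0$. The $M_{n_1}$-part is then a division grading by a subgroup of order $n_1^2$ on which $\beta$ must be nondegenerate, which is the obstruction above.

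The missing idea is to change the grading group on $M_{n_2}(K)$ instead of searching inside $\mathbb{Z}_{n_2}^2$. Write $M_{n_2}(K)\cong M_{n_1}(K)\otimes M_d(K)$ and grade it by $\mathbb{Z}_{n_1}^2\times\mathbb{Z}_d^2$ using the tensor product of Pauli gradings; this group is not isomorphic to $\mathbb{Z}_{n_2}^2$ when $\gcd(n_1,d)>1$. Its bicharacter restricted to $\mathbb{Z}_{n_1}^2\times\{0\}$ is exactly the Pauli bicharacter of $M_{n_1}(K)$. So you can place $(P_1^aQ_1^b,0)$ and $(0,P_1^aQ_1^b\otimes I_d)$ together in degree $((a,b),(0,0))$.

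This is the paper's proof, phrased as $R\cong M_{n_1}(K)\otimes\bigl(K\oplus M_d(K)\bigr)$. First grade $K\oplus M_d(K)$ by $\mathbb{Z}_d^2$ via the Pauli grading of $M_d(K)$, adjoining $K$ to the neutral component; this is regular because the $M_d$-parts are invertible. Then tensor with the Pauli grading of $M_{n_1}(K)$, using that a tensor product of regular gradings is regular with the product bicharacter.

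Your remark that $\operatorname{char}K\nmid n_2$ is needed is correct. The paper's argument also uses Pauli gradings on $M_{n_1}(K)$ and $M_d(K)$, so it tacitly requires the same assumption.
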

	\begin{proof}
		Since $n_{1}\mid n_{2}$, we can write $n_{2}=n_{1}q$ for some $q\in \mathbb{N}$. Then 
		\[
		R\cong M_{n_{1}}(K)\otimes \big(K\oplus M_{q}(K)\big).
		\]
		Consider the Pauli's grading on $M_{q}(K)$:
		\[
		M_{q}(K)=\bigoplus_{(k,l)\in \mathbb{Z}_{q}\times \mathbb{Z}_{q}} B_{(k,l)}.
		\]
		In this way, if we define $C_{(0,0)}:=K\oplus B_{(0,0)}$ and $C_{(k,l)}:= B_{(k,l)}$ for $(k,l)\neq (0,0)$, it follows immediately that $K\oplus M_{q}(K)=\bigoplus_{(k,l)\in \mathbb{Z}_{q}\times \mathbb{Z}_{q}} C_{(k,l)}$ is a regular grading. Now, set
		\[
		T:=(\mathbb{Z}_{n_{1}}\times \mathbb{Z}_{n_{1}})\times  (\mathbb{Z}_{q}\times \mathbb{Z}_{q})
		\]
		and consider the Pauli's grading on $M_{q}(K)$:
		\[
		M_{n_{1}}(K)=\bigoplus_{(i,j)\in \mathbb{Z}_{n_{1}}\times \mathbb{Z}_{n_{1}}} L_{(i,j)}.
		\]
		For any $((i,j),(k,l))\in T$ we define $R_{((i,j),(k,l))}:=L_{(i,j)}\otimes C_{(k,l)}$, then, by \cite[Lemma 26]{Eli1} (or \cite[Proposition 2.1]{bahturin2009graded}), it follows that
		\[
		R=\bigoplus_{((i,j),(k,l))\in T} R_{((i,j),(k,l))}
		\]
		 is a regular grading on $R$, whose decomposition matrix is the Kronecker product of the decomposition matrices  of $M_{n_{1}}(K)$ and $K\oplus M_{q}(K)$.
	
	\end{proof}

\begin{Corollary} 
	\label{p.group}
	Let $p>1$ be a prime number and $R=M_{n_{1}}(K)\oplus \cdots \oplus M_{n_{r}}(K)$ be the direct sum of matrix algebras such that for all $1\leq i\leq r$, $n_{i}=p^{\ell_{i}}$, for some $\ell_{i}\in \mathbb{N}$. Then $R$ admits a regular grading by some finite abelian group $T$.   
\end{Corollary}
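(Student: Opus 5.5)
Order the exponents so that $\ell_{1}\leq \ell_{2}\leq \cdots \leq \ell_{r}$ and argue by induction on $r$, generalizing the tensor-product trick from the proof of Proposition \ref{divisors}. The key observation is that divisibility is total on powers of a single prime: $n_{1}\mid n_{2}\mid\cdots\mid n_{r}$. This lets us factor out the smallest block each time. Set $N:=n_{r}=p^{\ell_{r}}$; the aim is a regular grading by a group of the form $\mathbb{Z}_{N}\times\mathbb{Z}_{N}$, or more conveniently by an iterated product of groups $\mathbb{Z}_{p^{a}}\times\mathbb{Z}_{p^{a}}$.

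For $r=1$, use the Pauli (Sylvester) grading on $M_{n_{1}}(K)$ by $\mathbb{Z}_{n_{1}}^{2}$. For the inductive step, write $n_{i}=n_{1}q_{i}$ with $q_{i}=p^{\ell_{i}-\ell_{1}}$, so that
\[
R\cong M_{n_{1}}(K)\otimes\big(K\oplus M_{q_{2}}(K)\oplus\cdots\oplus M_{q_{r}}(K)\big).
\]
If $q_{2}=1$, the second factor has a summand $K$ and fewer blocks after removing one $K$. In general, the second factor $A'$ is again a sum of matrix algebras of $p$-power sizes, with at most $r$ summands, one of which is $K$. I would strengthen the induction hypothesis to the statement: \emph{any algebra $K^{\oplus s}\oplus M_{m_{1}}(K)\oplus\cdots\oplus M_{m_{t}}(K)$ with all $m_{j}$ powers of $p$ admits a regular grading by a finite abelian group.} The induction runs on the number of non-trivial blocks $t$, using the total number of summands as a secondary measure.

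The base cases involve only copies of $K$. Here I would use the regular grading of $K^{\oplus s}$ by the trivial group, since the whole algebra is one component, is commutative, and has nonzero products (cf.\ Remark \ref{commutative.}). For the inductive step, pull out the smallest nontrivial size $m_{1}$ by the tensor factorization above. The scalar summands $K$ cannot be factored this way, so absorb them as in Proposition \ref{divisors}. Take the graded algebra $A'=K^{\oplus s}\oplus M_{m_{2}/m_{1}}\oplus\cdots$, together with its regular grading from induction. On $K^{\oplus s}\oplus M_{m_{1}}(K)\otimes A''$, put the extra scalars into the identity component, exactly as $C_{(0,0)}=K\oplus B_{(0,0)}$ was formed there.

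Regularity follows at each stage from two facts. First, the tensor product of regular gradings is regular, with decomposition matrix the Kronecker product (\cite[Lemma 26]{Eli1}). Second, adding a commutative summand into the neutral component preserves both the commutation relations and non-vanishing of products, because the other summand's products are already nonzero.

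The main obstacle is the bookkeeping with scalar summands $K$, which do not arise from a tensor factorization with $M_{n_{1}}(K)$ when $n_{1}>1$. I would handle this by first proving the auxiliary claim that if $A$ carries a regular $H$-grading, then so does $K\oplus A$, by putting $K$ into $A_{e}$. This holds because $1_{K}$ commutes with everything, is orthogonal to $A$, and products inside $A$ stay nonzero. Iterating this claim, together with the tensor step, builds $R$ block by block.
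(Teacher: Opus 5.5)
Your proposal is correct and follows essentially the paper's route: factor $R\cong M_{p^{\ell_1}}(K)\otimes(K\oplus B)$, grade $K\oplus B$ by placing the extra $K$ in the neutral component of a regular grading of $B$, and tensor with the Pauli grading using the tensor-product (Kronecker) lemma for regular gradings. Your strengthened induction on the number of nontrivial blocks, allowing summands $K^{\oplus s}$, makes explicit the case $\ell_i=0$ that the paper's induction on $r$ uses implicitly when $\ell_2=\ell_1$. One slip to fix: the factor tensored with $M_{m_1}(K)$ must be $K\oplus M_{m_2/m_1}(K)\oplus\cdots\oplus M_{m_t/m_1}(K)$, with $K^{\oplus s}$ kept outside, which is what your final paragraph actually does.
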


\begin{proof}
	We proceed by induction on $r\geq 1$. If $r=1$, then $R$ admits a regular quantum commutative decomposition given by the Pauli's grading. If $r=2$ then the result follows from Proposition \ref{divisors}.
	
	Assume $r>2$, and suppose that for any $2\leq s<r$, any algebra $W$ of the form 
	\[
	W=M_{p^{k_{1}}}(K)\oplus \cdots \oplus M_{p^{k_{s}}}(K),
	\]
	with $k_{1}$,\dots, $k_{s}\in \mathbb{N}$, admits a regular grading by a finite abelian group. 
	
	Without loss of generality, assume that $\ell_{1}\leq \ell_{i}$ for all $1\leq i\leq r$. Then
	\[
	R=M_{p^{\ell_{1}}}(K)\oplus \cdots \oplus M_{p^{\ell_{r}}}(K)\cong M_{p^{\ell_{1}}}(K)\otimes \big(K\oplus B\big),
	\]
	where 
	\[
	B:=M_{p^{\ell_{2}-\ell_{1}}}(K)\oplus \cdots \oplus M_{p^{\ell_{r}-\ell_{1}}}(K).
	\]
	By the induction hypothesis, there exists a finite abelian group $H$ with neutral element $e\in H$ such that 
	\[
	B=\bigoplus_{h\in H} B_{h}
	\]
	is a regular grading of $B$. Define
	\[
	C_{e}:=K\oplus B_{e},\quad C_{h}:=B_{h}\quad \text{for all $h\in H$, $h\neq e$}.
	\]
	It follows that $K\oplus B=\bigoplus_{h\in H} C_{h}$ is a regular grading. Now, consider the Pauli's grading on $M_{p^{\ell_{1}}}(K)$:
	\[
	M_{p^{\ell_{1}}}(K)=\bigoplus_{(i,j)\in \mathbb{Z}_{p^{\ell_{1}}}\times \mathbb{Z}_{p^{\ell_{1}}}} L_{(i,j)}.
	\]
	Let $T:=(\mathbb{Z}_{p^{\ell_{1}}}\times \mathbb{Z}_{p^{\ell_{1}}})\times H$, and for any $\mathbf{t}=((i,j),h)\in T$ define 
	\[
	R_{\mathbf{t}}:=L_{(i,j)}\otimes C_{h}.
	\]
	Then, as in Proposition \ref{divisors} we conclude that $R=\bigoplus_{\mathbf{t}\in T}R_{\mathbf{t}}$ is a regular grading whose decomposition matrix is the Kronecker product of the decomposition matrices  of $M_{p^{\ell_{1}}}(K)$ and $K\oplus B$.
	\end{proof}

\begin{Remark}
	In \cite[Corollary 4.1]{Bahturin.Parmenter}, it is shown that if $G$ is a finite group and $K$ is an algebraically closed field with $\operatorname{char}(K)=0$ or $\operatorname{char}(K)\nmid |G|$, then the group algebra $KG$ admits a $\beta$-commutative grading for a suitable bicharacter $\beta$ defined on a finite abelian group $Q$. However, such gradings are not necessarily regular, since the second regularity condition may fail.  For instance, suppose $\operatorname{char}(K)=0$, and let $R=K\oplus Ku$ where $u^{2}=0$. Then $R$ becomes a $\mathbb{Z}_{2}$-graded algebra with $R_{0}=K$ and $R_{1}=Ku$, and it is $\beta$-commutative, where $\beta\colon \mathbb{Z}_{2}\times \mathbb{Z}_{2}\rightarrow K^{\ast}$ is given by $\beta(0,0)=\beta(1,0)=\beta(0,1)=1$ and $\beta(1,1)=-1$. However, $R$ is not regular since $u^{2}=0$. 

The next result shows that, under the same assumptions on $K$, one can in fact guarantee the existence of regular gradings on $KG$ when $G$ is a $p$-group for some prime number $p>1$.
\end{Remark}

Given a finite group $G$ with neutral element $e\in G$, we define the \empty{upper central series} of $G$ as the sequence of normal subgroups of $G$:
\[
Z_{0}\subseteq Z_{1}\subseteq\cdots \subseteq Z_{n}\subseteq\cdots  
\]
where $Z_{0}:=\{e\}$, $Z_{1}:=Z(G)$, and for $n\geq 1$, $Z_{n}$ is defined inductively by the condition $Z_{n+1}/Z_{n}=Z(G/Z_{n})$. In other words, if $\pi_{n}\colon G\rightarrow G/Z_{n}$ is the natural projection, then $Z_{n+1}:=\pi_{n}^{-1}(Z(G/Z_{n}))$. We say that $G$ is \empty{nilpotent} if there exists $n\in \mathbb{N}$ such that $Z_{n}=G$.  For instance, it is well known that any group of order $|G|=p^{n}$ (where $p>1$ is a prime number) is nilpotent.

\begin{Remark}
	\label{normal.abelian.non-central}
	Let $G$ be a nilpotent non-abelian group with neutral element $e\in G$. Then there exists a normal abelian subgroup $H$ of $G$ such that $H\nsubseteq Z_{1}$. Indeed, since $G$ is nilpotent and non-abelian, we have $Z_{1}\varsubsetneq Z_{2}$, and thus there exists $x\in Z_{2}\setminus Z_{1}$. Consider the subgroup generated by $\{x\}\cup Z_{1}$. Clearly, $H$ is abelian and $H\nsubseteq Z(G)$. Moreover, for any $g\in G$, we have $(xZ_{1})(gZ_{1})=(gZ_{1})(xZ_{1})$, that is, $gxg^{-1}\in xZ_{1}\subseteq H$. This shows that $H$ is a normal subgroup of $G$, and the result follows.  
\end{Remark}

\begin{Lemma}
	\label{div.|G|}
	Let $G$ be a nilpotent non-abelian group with neutral element $e\in G$ and suppose that either $\operatorname{char}(K)=0$ or $\operatorname{char}(K)\nmid |G|$. Then, for any irreducible $KG$-module $V$, we have $\dim(V)\mid |G|$.  
\end{Lemma}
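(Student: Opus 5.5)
The plan is induction on $|G|$, using the normal abelian non-central subgroup from Remark \ref{normal.abelian.non-central} together with Clifford theory. Throughout, $K$ is algebraically closed and $\operatorname{char}(K)\nmid |G|$, so the same holds for every subgroup and every quotient of $G$. The base situation is an abelian group: there every irreducible module is one-dimensional, since commuting diagonalizable operators over an algebraically closed field have a common eigenvector. So the divisibility is trivial for abelian groups. Since subgroups and quotients of nilpotent groups are nilpotent, the induction hypothesis will apply to every proper subgroup and every proper quotient of $G$, whether or not they are abelian.

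Let $V$ be an irreducible $KG$-module with representation $\rho$, and let $H$ be the normal abelian subgroup with $H\nsubseteq Z(G)$ given by Remark \ref{normal.abelian.non-central}. Because $\operatorname{char}(K)\nmid |H|$ and $H$ is abelian, $V|_H$ is semisimple and splits as $V=\bigoplus_{\chi}V_\chi$ into isotypic components indexed by linear characters $\chi$ of $H$. For $g\in G$ one has $gV_\chi=V_{\chi^g}$, where $\chi^g(h)=\chi(g^{-1}hg)$. Irreducibility of $V$ then forces $G$ to permute the nonzero components transitively. Fix a $\chi$ with $V_\chi\neq 0$ and let $S$ be its stabilizer. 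The standard Clifford argument gives that $W:=V_\chi$ is an irreducible $KS$-module and $V\cong \operatorname{Ind}_S^G W$. Hence
\[
\dim V=[G:S]\,\dim W .
\]

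There are two cases. If $S\neq G$, then $S$ is a proper nilpotent subgroup, and induction gives $\dim W\mid |S|$. Therefore $\dim V=[G:S]\dim W$ divides $[G:S]\,|S|=|G|$.

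If $S=G$, then $V=V_\chi$, so every $h\in H$ acts on $V$ as the scalar $\chi(h)$. Pick $x\in H\setminus Z(G)$ and $g\in G$ with $c:=xgx^{-1}g^{-1}\neq e$. Since $\rho(x)$ is a scalar, $\rho(c)=1$, so $N:=\ker\rho$ is a nontrivial normal subgroup. Then $V$ is an irreducible module over the smaller nilpotent group $G/N$, and $\operatorname{char}(K)\nmid|G/N|$. By induction $\dim V$ divides $|G/N|$, which divides $|G|$.

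The main obstacle I expect is the Clifford step: checking that $W$ is irreducible over $S$ and that $V\cong\operatorname{Ind}_S^GW$ in the modular-but-coprime setting. I would verify this directly. The components $V_\chi$ are permuted by $G$ and are $S$-stable. Any $KS$-submodule $W'\subseteq W$ generates the $KG$-submodule $\sum_{g}gW'$, and this equals $\bigoplus_{gS\in G/S} gW'$ because the translates $gW'$ lie in distinct isotypic components. Irreducibility of $V$ then forces $W'=W$ or $W'=0$. The same directness shows that the natural map $KG\otimes_{KS}W\to V$ is an isomorphism by a dimension count. This step uses only semisimplicity of $V|_H$, which holds because $\operatorname{char}(K)\nmid|H|$.
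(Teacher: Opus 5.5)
Your proof is correct. It shares the paper's skeleton: induction on $|G|$, realize $V$ as induced from a proper subgroup, and conclude from $\dim V=[G:S]\dim W$. The key step, however, is obtained differently.

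\emph{The paper's route.} It cites Blichfeldt's theorem (Curtis--Reiner, Corollary 50.7) as a black box to get $V\cong KG\otimes_{K\widetilde H}W$ for some normal subgroup $\widetilde H$. It then applies the induction hypothesis to $\widetilde H$.

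\emph{Your route.} You prove the needed imprimitivity directly, by Clifford theory applied to the normal abelian non-central subgroup $H$ of Remark~\ref{normal.abelian.non-central}. Interestingly, the paper states that remark immediately before the lemma but never uses it in its own proof. You then dispose of the primitive case $S=G$ by showing $\ker\rho\neq\{e\}$ and passing to $G/\ker\rho$, which is exactly the content of Blichfeldt's lemma.

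\emph{What each buys.} The citation buys brevity. Your version buys the following:
\begin{itemize}
\item It is self-contained. Only Maschke's theorem for the abelian group $H$ is used, so the coprime modular case is visibly covered, whereas with the citation one must check that the cited result applies in that setting.
\item The inducing subgroup does not need to be normal.
\item It handles explicitly the case where $V$ is not induced from any proper subgroup, for instance $\dim V=1$. The paper's write-up tacitly assumes $\widetilde H\neq G$ when it applies the induction hypothesis to $\widetilde H$. That omission is harmless, but your case split covers it honestly.
\end{itemize}
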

\begin{proof}
	
	By \cite[Theorem 5.1.9]{herstein}, we may assume that $\operatorname{char}(K)\nmid |G|$. We prove the result by induction on $|G|$. The result is clear if $|G|=1$. 
	
	Assume that for any nilpotent group $Q$ with $1\leq |Q|<|G|$, we have $\dim(W)\mid |Q|$ whenever $W$ is an irreducible $KQ$-module. Let $V$ be an irreducible $KG$-module. By Blichfeldt's theorem (see \cite[Corollary 50.7]{Curtis.Reiner}), there exists a normal subgroup $\widetilde{H}$ of $G$ and an irreducible $K\widetilde{H}$-module $W$ such that 
	\[
	V\cong KG\otimes_{K\widetilde{H}} W,\quad \text{(as $KG$-modules)}.
	\]
	Let $\mathscr{T}:=\{g_{1},\ldots, g_{[G:\widetilde{H}]}\}$ be a transversal of $\widetilde{H}$ in $G$. Then
	\[
	V\cong KG\otimes_{K\widetilde{H}} W\cong \bigoplus_{i=1}^{[G:\widetilde{H}]}(g_{i}\otimes_{K\widetilde{H}} W).
	\]
	Therefore,
	\[
	\dim(V)=[G:\widetilde{H}]\dim(W),
	\]
	that is, $|G|=\dfrac{|\widetilde{H}|\dim(V)}{\dim(W)}$. By the induction hypothesis, $\dim(W)\mid |\widetilde{H}|$, and the result follows.
\end{proof}

\begin{Corollary}
	Suppose either $\operatorname{char}(K)=0$ or $\operatorname{char}(K)\nmid|G|$. Let $G$ be a non-abelian finite group of order $p^{n}$, where $p>1$ is a prime number. Then $KG$ admits a regular grading by some abelian group $T$.
\end{Corollary}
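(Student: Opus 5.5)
The plan is to reduce the statement to Corollary \ref{p.group} through the Wedderburn decomposition of $KG$. Since $\operatorname{char}(K)=0$ or $\operatorname{char}(K)\nmid |G|$, Maschke's theorem makes $KG$ semisimple. Because $K$ is algebraically closed, Wedderburn's theorem then gives
\[
KG\cong M_{n_{1}}(K)\oplus \cdots \oplus M_{n_{r}}(K),
\]
where $n_{1},\ldots,n_{r}$ are the dimensions of the irreducible $KG$-modules. So it is enough to show that every $n_{i}$ is a power of $p$. Corollary \ref{p.group} then produces a finite abelian group $T$ together with a regular $T$-grading on the direct sum, and we transport this grading to $KG$ along the algebra isomorphism.

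To see that each $n_{i}$ is a power of $p$, first recall that $G$ is nilpotent because $|G|=p^{n}$. The hypothesis on $\operatorname{char}(K)$ is exactly the one in Lemma \ref{div.|G|}, and $G$ is non-abelian, so that lemma applies and gives $n_{i}\mid |G|=p^{n}$. Hence $n_{i}=p^{\ell_{i}}$ for some $\ell_{i}\geq 0$.

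The main obstacle is a bookkeeping one. Corollary \ref{p.group} writes $\ell_{i}\in\mathbb{N}$, and its proof starts from $M_{p^{\ell_{1}}}(K)$ with the minimal exponent. Here some $\ell_{i}=0$ always occurs, since the trivial module gives a one-dimensional summand $K$. I therefore need to check that the induction in Proposition \ref{divisors} and Corollary \ref{p.group} still works when $n_{1}=1$. It does: $M_{1}(K)=K$ carries the trivial grading by the trivial group, the factorization $R\cong K\otimes(K\oplus B)$ is still valid, and the grading $C_{e}=K\oplus B_{e}$, $C_{h}=B_{h}$ is built exactly as before.

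Two points in that construction need checking. First, regularity of $K\oplus B$ with $C_{e}=K\oplus B_{e}$: a product of homogeneous elements of $B$ that is nonzero stays nonzero after adding $0$ in the $K$ component, and commutation with elements of $C_{e}$ is trivial because $K\oplus B_{e}$ is central. Second, regularity of the tensor product grading, which follows from \cite[Lemma 26]{Eli1}. With these verified, the statement follows.
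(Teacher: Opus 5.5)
Your proposal is correct and follows the paper's proof. Maschke's and Wedderburn's theorems give $KG\cong\bigoplus_i M_{n_i}(K)$, Lemma \ref{div.|G|} forces each $n_i$ to be a power of $p$, and Corollary \ref{p.group} then supplies a regular grading by a finite abelian group. You also check that the induction in Proposition \ref{divisors} and Corollary \ref{p.group} still works when some exponent is $\ell_i=0$ (the summands $K$ coming from the linear characters); the paper leaves this implicit, and your verification of it is sound.
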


\begin{proof}
	Let $V$ be an irreducible $KG$-module. Since $G$ is a nilpotent group, by Lemma \ref{div.|G|} we have
	\[
	\dim(V)\mid |G|.
	\]
	   In particular, $\dim(V)$ is a power of $p$. It follows that all simple components of $KG$ are matrix algebras of the form $M_{p^{k}}(K)$, with $k\leq n$. The result now follows from Lemma \ref{p.group}.
\end{proof}	

\section{Acknowledgements}

The first author would like to thank Lucio Centrone for the kind invitation and warm hospitality during his two-month research stay at the Department of Mathematics of the University of Bari, which contributed to this work. 

The second author thanks GNSAGA.

The third author acknowledges that this article forms part of his Ph.D. thesis and was developed during his research visit at the Department of Mathematics of the University of Bari.

\end{document}